\definecolor{ceruleanblue}{rgb}{0.16, 0.32, 0.75}
\def\theglossary{\@restonecoltrue\if@twocolumn\@restonecolfalse\fi
\columnseprule\z@ \columnsep 35\p@
\let\@makessectionhead\indexsec
\@xp\section\@xp*\@xp{\glossaryname}%
\let\item\@idxitem
\parindent\z@ \parskip\z@\@plus.3\p@
\relax\footnotesize}
\def\glossaryname{Notation Index}
\renewcommand\theenumi{\roman{enumi}}
\renewcommand\labelenumi{(\theenumi)}
\newcommand\note[1]%
\def\today{\number\year-\ifnum\month<10
0\fi\number\month-\ifnum\day<10 0\fi\number\day}
\def\hour{\ifnum\count253<10
0\number\count253\else\number\count253\fi}
\def\minute{\ifnum\count254<10
0\number\count254\else\number\count254\fi}
\numberwithin{equation}{subsection}
\newtheorem{theorem}[equation]{Theorem}
\newtheorem{introtheorem}{Theorem}
\newtheorem{lemma}[equation]{Lemma}
\newtheorem{proposition}[equation]{Proposition}
\newtheorem{corollary}[equation]{Corollary}
\theoremstyle{definition}
\newtheorem{definition}[equation]{Definition}
\newtheorem{example}[equation]{Example}
\newtheorem{remark}[equation]{Remark}
\newtheorem*{notation*}{Notation}
\newcommand\lie{\mathfrak}
\newcommand\liea{\lie{a}}
\newcommand\liec{\lie{c}}
\newcommand\g{\lie{g}}
\newcommand\h{\lie{h}}
\newcommand\liek{\lie{k}}
\newcommand\liel{\lie{l}}
\newcommand\m{\lie{m}}
\newcommand\n{\lie{n}}
\newcommand\p{\lie{p}}
\newcommand\bb[1]{{\text{\bf#1}}}
\newcommand\N{\bb{N}}
\newcommand\Z{\bb{Z}} 
\newcommand\R{\bb{R}} 
\newcommand\C{\bb{C}}
\newcommand\M{\bb{M}}
\newcommand\V{\bb{V}}
\newcommand\W{\bb{W}}
\newcommand\ca{\mathscr}
\DeclareMathOperator\Aut{Aut}
\DeclareMathOperator\Ad{Ad}
\DeclareMathOperator\ad{ad}
\DeclareMathOperator\codim{codim}
\DeclareMathOperator\Diff{Diff}
\DeclareMathOperator\End{End}
\DeclareMathOperator\ev{ev}
\DeclareMathOperator\Hom{Hom}
\DeclareMathOperator\Hol{Hol}
\DeclareMathOperator\id{id}
\DeclareMathOperator\Inn{Inn}
\DeclareMathOperator\Lie{Lie}
\DeclareMathOperator\Mon{Mon}
\DeclareMathOperator\MQK{MQK}
\DeclareMathOperator\Out{Out}
\DeclareMathOperator\Pf{Pf}
\DeclareMathOperator\pr{pr}
\DeclareMathOperator\rank{rank}
\DeclareMathOperator\Stab{Stab}
\DeclareMathOperator\stab{stab}
\DeclareMathOperator\supp{supp}
\newcommand\group[1]{{\text{\bf#1}}}
\newcommand\SO{\group{SO}}
\newcommand\A{\group{A}}
\newcommand\B{\group{B}}
\newcommand\D{\group{D}}
\newcommand\E{\group{E}}
\newcommand\FF{\group{F}}
\newcommand\abs[1]{\lvert#1\rvert}
\newcommand\inner[1]{\langle#1\rangle}
\newcommand\qu[1][\kern.3ex]{/\kern-.7ex/_{\kern-.4ex#1}}
\newcommand\bigqu[1][\,\,]{\big/\kern-.85ex\big/_{\!\!#1}}
\newcommand\powl{[\kern-.3ex[}
\newcommand\powr{]\kern-.3ex]}
\newcommand\bigpowl{\bigl[\kern-.6ex\bigl[}
\newcommand\bigpowr{\bigr]\kern-.6ex\bigr]}
\newcommand\inj{\hookrightarrow}
\newcommand\sur{\mathrel{\to\kern-1.8ex\to}}
\newcommand\iso{\mathrel{\hookrightarrow\kern-1.8ex\to}}
\newcommand\longto{\longrightarrow}
\newcommand\longhookrightarrow{\lhook\joinrel\longrightarrow}
\newcommand\longinj{\longhookrightarrow}
\newcommand\longsur{\mathrel{\longrightarrow\kern-1.8ex\to}}
\newcommand\dyet{\text{\it\dj}}
\newcommand\Dyet{\text{\it\DJ}}
\newcommand\zerodots%
\newcommand\bigzerodots%
\newcommand\F{{\ca{F}}}
\newcommand\X{{\lie{X}}}
\newcommand\barX{\lie{X}\kern-0.55em\overline{\phantom I}\kern0.15em}
\newcommand\barF{\F\kern-0.6em\overline{\phantom{\rm F}}}
\newcommand\cc{{\mathrm{c}}}
\newcommand\cv{{\mathrm{cv}}}
\newcommand\ct{{\mathrm{ct}}}
\newcommand\even{\text{\rm even}}
\newcommand\odd{\text{\rm odd}}
\newcommand\hor{\text{\rm hor}}
\newcommand\ver{\text{\rm vert}}
\newcommand\hhor[1]{\text{\rm$#1$-hor}}
\newcommand\bas{\text{\rm bas}}
\newcommand\bbas[1]{\text{\rm$#1$-bas}}
\newcommand\CE{\text{\rm CE}}
\newcommand\BRST{\text{\rm BRST}}
\newcommand\LC{\text{\rm LC}}
\newcommand\prin{\text{\rm prin}}
\newcommand\tot{\text{\rm tot}}
\newcommand\uni{\text{\rm uni}}
\begin{document}


\title[Localization for transverse Lie algebra actions on Riemannian
  foliations]{Cohomological localization for transverse Lie algebra
  actions on Riemannian foliations}




\author{Yi Lin}

\address{Department of Mathematical Sciences, Georgia Southern
  University, Statesboro, GA 30460 USA}

\email{yilin@georgiasouthern.edu}

\author{Reyer Sjamaar}

\address{Department of Mathematics, Cornell University, Ithaca, NY
14853-4201, USA}

\email{sjamaar@math.cornell.edu}

\subjclass[2010]{57R30 (57R91 58A12)}

\date\today


\begin{abstract}
We prove localization and integration formulas for the equivariant
basic cohomology of Riemannian foliations.  As a corollary we obtain a
Duistermaat-Heckman theorem for transversely symplectic foliations.
\end{abstract}


\maketitle

\tableofcontents


\section{Introduction}

Most of the recent advances in the area of Riemannian foliations, such
as the index theorems of Br\"uning et
al.~\cite{bruening-kamber-richardson;index-basic-foliations} and
Gorokhovsky and Lott~\cite{gorokhovsky-lott;index-transverse-molino},
and the cohomological localization formulas of
T\"oben~\cite{toeben;localization-basic-characteristic} and Goertsches
et al.~\cite{goertsches-nozawa-toeben;chern-simons-foliations}, build
on the structure theory developed by Molino in his
monograph~\cite{molino;riemannian-foliations}.  The central feature of
this structure theory is that the leaf closures of a Riemannian
foliation, in sharp contrast to those of a general foliation, form a
singular foliation.  This leaf closure foliation possesses a type of
``internal'' symmetry in the shape of a locally constant sheaf of Lie
algebras of vector fields, known as the Molino structure sheaf or
centralizer sheaf, which acts transversely on each leaf in such a way
that the orbit of the leaf is the closure of the leaf.  A Killing
foliation is a Riemannian foliation whose centralizer sheaf is
globally constant, in which case it is automatically abelian, as shown
by Molino.  The Goertsches-Nozawa-T\"oben cohomological localization
formulas hold in the setting of Killing foliations.  They are
integration formulas for differential forms that are basic with
respect to the foliation and equivariant with respect to the Molino
centralizer algebra.

Many Riemannian foliations possess additional, ``external'' Lie
algebras of symmetries that commute with the Molino centralizer sheaf,
and the goal of this paper is to extend the cohomological localization
formulas to this wider class of symmetries.  Our main results are two
localization theorems in equivariant basic de Rham cohomology.  The
first is a contravariant Borel-Atiyah-Segal version, which resolves a
problem posed
in~\cite{goertsches-nozawa-toeben;chern-simons-foliations}.

\begin{introtheorem}
Let $M$ be a compact manifold equipped with a Riemannian foliation
$\F$ and an isometric transverse action of an abelian Lie algebra
$\g$.  Then the inclusion of the fixed leaf set $M^\g$ into $M$
induces an isomorphism in localized equivariant basic cohomology
$S^{-1}H_\g(M,\F)\cong S^{-1}H_\g(M^\g,\F)$.
\end{introtheorem}

Here $S$ is the multiplicative subset $S\g^*\backslash\{0\}$ of the
symmetric algebra $S\g^*$.  See Theorem~\ref{theorem;borel} for a more
precise version.  Our second main result is a covariant
Atiyah-Bott-Berline-Vergne localization theorem.

\begin{introtheorem}
Let $M$ be a compact manifold equipped with a Riemannian foliation
$\F$ and an isometric transverse action of an abelian Lie algebra
$\g$.  Suppose that the foliation $\F$ is transversely oriented and
that the Molino structure bundle has trivial determinant.  Let
$\alpha$ be a $d_\g$-closed equivariant basic form on $M$.  Then
\[
\fint_M\alpha=\sum_X\fint_X\eta_X^{-1}\wedge i_X^*\alpha,
\]
where the sum is over the connected components $X$ of the fixed-leaf
manifold $M^\g$.
\end{introtheorem}

Here $\fint$ denotes transverse integration and $\eta_X$ is the
equivariant basic Euler form of the component $X$.  See
Theorem~\ref{theorem;abbv} for a fuller statement.  These results rely
on a version of the equivariant Thom isomorphism theorem which we
established in~\cite{lin-sjamaar;thom}.  An immediate consequence of
these theorems is a Duistermaat-Heckman theorem for the transversely
symplectic case, Theorem~\ref{theorem;duistermaat-heckman}.  Another
application is the computation of the basic Betti numbers of toric
quasifolds in~\cite[\S\,6]{lin-yang;basic-kirwan}.  We postpone
further applications of our theorems to a sequel to this paper.  See
also~\cite{casselmann-fisher;localization-k-contact} for related
recent work.

Lie algebras in this paper act on foliated manifolds not as true
vector fields, but as transverse vector fields, which are equivalence
classes of vector fields.  Because of this one cannot form an action
Lie algebroid in the usual way.  However, there is a way of
amalgamating the Lie algebra with the foliation to create what we call
the transverse action Lie algebroid.  Just like action algebroids and
foliation algebroids, transverse action algebroids are always
integrable (Proposition~\ref{proposition;transverse-action-groupoid}).
Although this fact plays no further role in the paper, we have
included it because it may offer an approach to cohomological
localization formulas for foliations that are not Riemannian and to
which the Molino structure theory does not apply.  Namely, the
equivariant basic de Rham complex is a subcomplex of the much larger
simplicial de Rham complex of any Lie groupoid integrating the
transverse action algebroid, the properties of which remain to be
explored.

Another subsidiary result that we hope may be useful elsewhere is an
orbit type stratification theorem for isometric transverse Lie algebra
actions on Riemannian foliations, Theorems~\ref{theorem;symmetry}
and~\ref{theorem;semicontinuous}, which is almost the same as for
proper Lie group actions, even though the slice theorem fails in our
context.

A notation index is provided at the end of the paper.

\section{Preliminaries}\label{section;preliminary}

All manifolds in this paper will be smooth ($C^\infty$) and
paracompact and all vector fields and foliations will be smooth.  In
this preliminary section we start by reviewing some elementary
foliation theory, largely for the purpose of introducing notation.
See also the notation index in the back.  Then we recall the notion of
a transverse Lie algebra action on a foliated manifold.  This is not a
Lie algebra action on the manifold, but should be thought of as a
model for a Lie algebra action on the leaf space of the foliation.  We
introduce a certain integrable Lie algebroid associated with a
transverse action, which we call the \emph{transverse action Lie
  algebroid}, and which does not seem to have appeared in the
literature before.  We show that it is integrable to a not necessarily
Hausdorff Lie groupoid.  We finish by reviewing Goertsches and
T\"oben's equivariant basic de Rham complex.

\subsection{Foliations}\label{section;foliation}

Let $M$ be a manifold and let $\F$ be a foliation of $M$.  We denote
by $\F(x)$ the leaf through $x\in M$, by $T_x\F$ the tangent space to
the leaf at $x$, and by $T\F$ the integrable subbundle of $TM$ tangent
to $\F$.  We call a subset $X$ of the manifold \emph{$\F$-invariant}
or \emph{$\F$-saturated} if for every $x\in X$ the leaf $\F(x)$ is
contained in $X$.  The \emph{normal bundle} of the foliation is the
vector bundle $N\F=TM/T\F$ over $M$.  A smooth map $f\colon M\to M'$
to a second foliated manifold $(M',\F')$ is said to be \emph{foliate}
if it maps each leaf of $\F$ to a leaf of $\F'$,
i.e.\ $f(\F(x))\subseteq\F'(f(x))$ for all $x\in M$.  Thus $f$ is
foliate if and only if the tangent map $Tf\colon TM\to TM'$ maps $T\F$
to $T\F'$.  The tangent map then descends to a bundle map $Nf\colon
N\F\to N\F'$, which we refer to as the \emph{normal derivative}.
\glossary{MF@$(M,\F)$, foliated manifold}
\glossary{F@$\F$, foliation}
\glossary{Fx@$\F(x)$, leaf of $x$}
\glossary{TFx@$T_x\F$, tangent space to leaf at $x$}
\glossary{TF@$T\F$, tangent bundle of $\F$}
\glossary{NF@$N\F$, normal bundle of $\F$}
\glossary{Nf@$Nf$, normal derivative of foliate map $f$}

Let $\X(M)=\Gamma(TM)$ be the Lie algebra of all vector fields on $M$
and let $\X(\F)=\Gamma(T\F)$ be the space of sections of $T\F$.  Then
$\X(\F)$ is the Lie subalgebra of $\X(M)$ consisting of all vector
fields tangent to the leaves of the foliation.  We denote the
normalizer $N_{\X(M)}(\X(\F))$ of $\X(\F)$ in $\X(M)$ by $\lie{N}(\F)$
and we call elements of $\lie{N}(\F)$ \emph{foliate vector fields}.  A
vector field $w$ on $M$ is foliate if and only if the flow of $w$
consists of foliate maps if and only if for every vector field $v$
tangent to $\F$ the commutator $[v,w]$ is also tangent to $\F$.  We
call elements of the quotient Lie algebra
$\X(M,\F)=\lie{N}(\F)/\X(\F)$ \emph{transverse vector fields}.  A
transverse vector field is not a vector field, but an equivalence
class of foliate vector fields modulo vector fields tangent to $\F$.
These Lie algebras, which go by various different names in the
standard references~\cite{kamber-tondeur;foliated-bundles},
\cite{moerdijk-mrcun;foliations-groupoids},
\cite{molino;riemannian-foliations},
and~\cite{tondeur;geometry-foliations}, form a short exact sequence
\begin{equation}\label{equation;foliate-transverse}
\X(\F)\longinj\lie{N}(\F)\longsur\X(M,\F).
\end{equation}
The transverse vector fields form a subspace of the space of sections
of the normal bundle,
\[
\X(M,\F)= \lie{N}(\F)/\X(\F)\subseteq\Gamma(N\F)= \X(M)/\X(\F).
\]
Being the quotient of a Lie algebra by a subalgebra, the space
$\Gamma(N\F)$ is an $\X(\F)$-module.  The subspace $\X(M,\F)$ is equal
to $\Gamma(N\F)^{\X(\F)}$, the space of $\X(\F)$-fixed sections of
$N\F$.  Thus transverse vector fields are $\X(\F)$-fixed sections of
the normal bundle $N\F$.
\glossary{G.amma@$\Gamma$, smooth global sections of bundle}
\glossary{XM@$\X(M)$, vector fields on $M$}
\glossary{NNF@$\lie{N}(\F)$, foliate vector fields on $(M,\F)$}
\glossary{XMF@$\X(M,\F)$, transverse vector fields on $(M,\F)$}
\glossary{XF@$\X(\F)$, vector fields tangent to $\F$}

If the foliation $\F$ is \emph{strictly simple}, i.e.\ given by the
fibres of a surjective submersion $M\to P$ with connected fibres, then
the leaf space $M/\F$ is a manifold diffeomorphic to~$P$.  In this
case the Lie algebra of transverse vector fields $\X(M,\F)$ is nothing
but the Lie algebra of vector fields on the leaf space.  If the
foliation is not strictly simple, we regard transverse vector fields
as a substitute for vector fields on the leaf space.  We can also
regard the leaf space as an \'etale stack and $\X(M,\F)$,
following~\cite[\S\,5.7]{hoffman-sjamaar;hamiltonian-stack}, as the
Lie algebra of vector fields on this stack.

Given a foliate map $f\colon(M,\F)\to(M',\F')$, we say that two
transverse vector fields $v\in\X(M,\F)$ and $v'\in\X(M',\F')$ are
\emph{$f$-related}, and we write
\begin{equation}\label{equation;related}
v\sim_fv',
\end{equation}
if the normal derivative of $f$ satisfies $N_xf(v_x)=v'_{f(x)}$ for
all $x\in M$.
\glossary{.@$\sim_f$, $f$-relatedness between vector fields}

The $\X(\F)$-module structure on the space of sections $\Gamma(N\F)$,
\[
\nabla^\F\colon\X(\F)\times\Gamma(N\F)\longto \Gamma(N\F),
\]
is induced by the Lie bracket $\X(\F)\times\X(M)\to\X(M)$ and is
called the \emph{Bott connection} or \emph{partial connection} of the
foliation.  It satisfies the usual rules
\[
\nabla^\F_{fu}v=f\nabla^\F_uv,\qquad
\nabla^\F_u(fv)=L(u)(f)v+f\nabla^\F_uv
\]
for all smooth functions $f$ and sections $u\in\X(\F)$,
$v\in\Gamma(N\F)$.  The partial connection provides a \emph{partial
  horizontal lifting map} $T\F\to T(N\F)$, i.e.\ a splitting over
$T\F$ of the surjection $T(N\F)\to TM$.  For every leaf $L$ of $\F$
the partial connection induces a genuine flat connection on the
restricted bundle $N\F|_L$.
%
\glossary{.@$\nabla^\F$, partial connection of foliation}
%

\subsection{Transverse Lie algebra actions}\label{section;transverse}

Let $\g$ be a finite-dimensional real Lie algebra and let $(M,\F)$ be
a foliated manifold.  A \emph{foliate action} of $\g$ on $(M,\F)$ is a
Lie algebra homomorphism $\g\to\lie{N}(\F)$.  By the Lie-Palais
theorem~\cite[Ch.\ 4]{palais;lie-transformation}, if the vector fields
induced by $\g$ are complete, a foliate $\g$-action integrates to a
smooth $G$-action, where $G$ is a suitable Lie group with Lie algebra
$\g$.  The group $G$ then acts by foliate diffeomorphisms and we refer
to this as a \emph{foliate $G$-action}.
\glossary{g@$\g$, Lie algebra}
\glossary{G@$G$, Lie group}

A \emph{transverse action} of $\g$ on $(M,\F)$ is a Lie algebra
homomorphism $a\colon\g\to\X(M,\F)$.  For $\xi\in\g$ we denote the
transverse vector field $a(\xi)$ by $\xi_M$.  Each of these is
represented by a foliate vector field $\tilde{\xi}_M\in\lie{N}(\F)$.
However, there is in general no way of choosing the representatives
$\tilde{\xi}_M$ in such a way that they generate a foliate
$\g$-action.  In other words a transverse $\g$-action does not
necessarily lift to a foliate $\g$-action, and in particular there is
no guarantee that a transverse $\g$-action ``integrates'' to a group
action.  Nevertheless,
Proposition~\ref{proposition;transverse-action-groupoid} below tells
us that a transverse Lie algebra action always integrates to the
action of a certain Lie groupoid.  We regard transverse Lie algebra
actions, which are the focus of our interest, as a substitute for Lie
group actions on the leaf space $M/\F$.
\glossary{x.i@$\xi_M$, transverse vector field induced by $\xi\in\g$}

For the remainder of \S\,\ref{section;transverse} we fix a transverse
$\g$-action $a\colon\g\to\X(M,\F)$.

If the foliation is strictly simple, then the transverse action
amounts to an ordinary $\g$-action on the manifold $M/\F$.  Similarly,
if $S$ is a transversal to the foliation, the transverse action on $M$
restricts to an ordinary $\g$-action on $S$, because every $x\in S$
has an open neighbourhood $U$ such that the foliation $\F|_U$ is
strictly simple with leaf space $S\cap U$.  Indeed, since transverse
vector fields on $M$ are $\X(\F)$-invariant sections of $N\F$, the
transverse $\g$-action on $U$ is uniquely determined by the
$\g$-action on $S\cap U$.

A \emph{Lie algebroid} over $M$ is a real vector bundle $\liea\to M$
equipped with a vector bundle map $t\colon \liea\to TM$ called the
\emph{anchor} and a real Lie algebra structure
\[
[{\cdot},{\cdot}]\colon\Gamma(\liea)\times\Gamma(\liea)\longto
\Gamma(\liea)
\]
on the space of smooth sections which satisfies the Leibniz rule
\begin{equation}\label{equation;leibniz}
[s_1,fs_2]=f[s_1,s_2]+L(t(s_1))(f)s_2
\end{equation}
for all sections $s_1$, $s_2\in\Gamma(\liea)$ and functions $f\in
C^\infty(M)$.  Here $L(t(s_1))(f)$ denotes the Lie derivative of $f$
along the vector field $t(s_1)$.

If the transverse $\g$-action on $M$ lifts to a true Lie algebra
action $\g\to\X(M)$, we can form the action Lie algebroid $\g\ltimes
M$ as in~\cite[\S\,6.2]{moerdijk-mrcun;foliations-groupoids}.  For a
general transverse $\g$-action the definition of an action Lie
algebroid does not make sense, but must be modified as follows.

Let $\g_M$ be the trivial bundle with fibre $\g$ over $M$.  We define
the \emph{transverse action Lie algebroid} $\g\ltimes\F$ of the
transverse $\g$-action to be the fibred product
\[
\g\ltimes\F=\g_M\times_{N\F}TM=\{\,(x,\xi,v)\in M\times\g\times
TM\mid\text{$v\in T_xM$ and $\xi_{M,x}=v\bmod T_x\F$}\,\},
\]
which is a smooth subbundle of the bundle $\g_M\times TM$ over $M$.
The projection $\g\ltimes\F\to\g_M$ defined by
$(x,\xi,v)\mapsto(x,\xi)$ has kernel $T\F$, so we have a short exact
sequence
\[
\begin{tikzcd}
T\F\ar[r,hook,"i"]&\g\ltimes\F\ar[r,two heads,"p"]&\g_M
\end{tikzcd}
\]
of vector bundles over $M$.   Define the
bundle map 
\begin{equation}\label{equation;anchor}
t\colon\g\ltimes\F\to TM
\end{equation}
by $t(\xi,v)=v$.  For every smooth map $\xi\colon M\to\g$ and every
$x\in M$ we define $\xi_x\in N_x\F$ to be the value of the transverse
vector field $a(\xi(x))\in\X(M,\F)$ at $x$.  A smooth section of
$\g\ltimes\F$ is a pair $(\xi,v)\in C^\infty(M,\g)\times\X(M)$
satisfying $\xi_x=v_x\bmod T_x\F$ for all $x\in M$.  We define the
bracket of two sections $(\xi,v)$ and $(\eta,w)$ by
\begin{equation}\label{equation;bracket}
\bigl[(\xi,v),(\eta,w)\bigr](x)=
\bigl([\xi(x),\eta(x)]+L(v)(\eta)(x)-L(w)(\xi)(x),[v,w](x)\bigr)
\end{equation}
for $x\in M$.  We assert that this bracket makes $\g\ltimes\F$ a Lie
algebroid over~$M$.
\glossary{gF@$\g\ltimes\F$, transverse action Lie algebroid}

\begin{proposition}\label{proposition;transverse-action-lie-algebroid}  
The bracket~\eqref{equation;bracket} is the unique skew symmetric
$\R$-bilinear operation on\/ $\Gamma(\g\ltimes\F)$ that satisfies the
Leibniz rule~\eqref{equation;leibniz} with respect to the vector
bundle map~\eqref{equation;anchor} and has the property\/
$[(\xi,v),(\eta,w)]=([\xi,\eta],[v,w])$ for all constant maps $\xi$,
$\eta\in C^\infty(M,\g)$.  This operation is a Lie bracket on the
space of sections\/ $\Gamma(\g\ltimes\F)$ and hence makes\/
$\g\ltimes\F$ a Lie algebroid over $M$ with anchor~$t$.
\end{proposition}

\begin{proof}[Sketch of proof]
We omit the verification that~\eqref{equation;bracket} satisfies the
Leibniz rule.  The product Lie algebra $\g\times\lie{N}(\F)$ has a
subalgebra
\[
\lie{L}=\{\,(\xi,v)\in\g\times\lie{N}(\F)\mid\xi_M=v\bmod\X(\F)\,\}.
\]
The Lie algebra $\lie{L}$ naturally identifies with a subspace of
$\Gamma(\g\ltimes\F)$, namely by regarding an element $\xi\in\g$ as a
constant map $\xi\in C^\infty(M,\g)$.  On the subspace $\lie{L}$ the
bracket~\eqref{equation;bracket} coincides with the Lie bracket coming
from $\g\times\lie{N}(\F)$, namely
$[(\xi,v),(\eta,w)]=([\xi,\eta],[v,w])$.  Every section of
$\g\ltimes\F$ can be written as a finite sum $\sum_if_i(\xi_i,v_i)$
with $f_i\in C^\infty(M)$ and $(\xi_i,v_i)\in\lie{L}$.  In other
words, the space of sections $\Gamma(\g\ltimes\F)$ is generated by
$\lie{L}$ as a $C^\infty(M)$-module.  Armed with these facts one shows
that $\Gamma(\g\ltimes\F)$ is closed under the
bracket~\eqref{equation;bracket}, that~\eqref{equation;bracket} is the
unique skew symmetric bilinear map that satisfies the Leibniz rule and
coincides with the Lie bracket on $\lie{L}$, and that it satisfies the
Jacobi identity.
\end{proof}

A \emph{Lie groupoid} over $M$ is a (not necessarily Hausdorff)
manifold $\bb{G}$ equipped with surjective submersions $\bb{s}$,
$\bb{t}\colon\bb{G}\to M$, smooth maps $\bb{m}\colon\bb{G}_2\to\bb{G}$
(where $\bb{G}_2$ is the fibred product
$\{\,(f,g)\in\bb{G}\times\bb{G}\mid s(f)=t(g)\,\}$),
$\bb{i}\colon\bb{G}\to\bb{G}$, and $\bb{u}\colon M\to\bb{G}$, which
make $\bb{G}$ an abstract groupoid over $M$ with source map $\bb{s}$,
target map $\bb{t}$, multiplication law $\bb{m}$, inversion law
$\bb{i}$, and unit map $\bb{u}$.  If $\bb{G}$ is a Lie groupoid over
$M$, the normal bundle $\liea\to M$ of the unit bisection
$\bb{u}(M)\cong M$ is equipped with a natural Lie algebroid structure.
We say that a Lie algebroid $\liea$ obtained in this way is
\emph{integrable}, and that $\bb{G}$ \emph{integrates} $\liea$;
cf.~\cite[Ch.~6]{moerdijk-mrcun;foliations-groupoids}.

\begin{proposition}\label{proposition;transverse-action-groupoid}
The transverse action Lie algebroid $\g\ltimes\F$ is integrable.
\end{proposition}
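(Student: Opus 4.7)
The strategy is to integrate $\g\ltimes\F$ by constructing a Lie groupoid locally and gluing. Pick a foliated chart $U\subseteq M$ on which $\F$ is strictly simple, with leaf-space projection $\pi\colon U\twoheadrightarrow S$. As observed earlier, the transverse $\g$-action restricts to a genuine $\g$-action on the transversal $S$. Under the natural identification $N\F|_U\cong\pi^{*}TS$ (with quotient map $T_x\pi$), the transverse vector field $\xi_M$ at $x\in U$ corresponds to the ordinary value $\xi_{S,\pi(x)}\in T_{\pi(x)}S$, so the defining condition $\xi_{M,x}=v\bmod T_x\F$ for a point $(\xi,v)\in(\g\ltimes\F)|_U$ is precisely the pullback condition $T_x\pi(v)=\xi_{S,\pi(x)}$. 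Hence $(\g\ltimes\F)|_U$ is canonically isomorphic to the pullback Lie algebroid $\pi^{!}(\g\ltimes S)$. Since $\g\ltimes S$ is integrated by the action groupoid $G\ltimes S$ for any connected Lie group $G$ with $\Lie G=\g$, and pullback of a Lie groupoid along a surjective submersion is again a Lie groupoid, the pullback groupoid $\pi^{!}(G\ltimes S)=\{(x,g,y)\in U\times G\times U:g\cdot\pi(x)=\pi(y)\}\rightrightarrows U$ integrates $(\g\ltimes\F)|_U$.

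To globalize, cover $M$ by such foliated charts $\{U_i\}$ with local integrations $\mathcal{G}_i$ as above. On each overlap $U_i\cap U_j$ the two local transversals $S_i$ and $S_j$ are related by a local holonomy diffeomorphism, which is automatically equivariant for the restricted $\g$-actions because the transverse $\g$-action on $M$ is globally defined. These identifications lift to canonical isomorphisms of pullback groupoids $\mathcal{G}_i|_{U_i\cap U_j}\cong\mathcal{G}_j|_{U_i\cap U_j}$, satisfying the cocycle condition on triple overlaps (inherited from the holonomy pseudogroup of $\F$), and thus glue into a Lie groupoid $\mathcal{G}\rightrightarrows M$ integrating $\g\ltimes\F$. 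The groupoid $\mathcal{G}$ may fail to be Hausdorff for exactly the same reason as the monodromy groupoid of $\F$, which is the special case $\g=0$ of our construction.

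The main obstacle is making the gluing rigorous: one must verify the smoothness of the cocycle and the consistency of the pullback groupoid isomorphisms across overlaps, which is delicate because the holonomy of $\F$ is only a pseudogroup. A more streamlined alternative is to invoke the Crainic--Fernandes integrability criterion: the pullback description shows that $\g\ltimes\F$ is locally integrable to Hausdorff Lie groupoids, whence its monodromy groups are discrete, and therefore $\g\ltimes\F$ integrates to a possibly non-Hausdorff Lie groupoid.
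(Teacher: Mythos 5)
Your proposal has two genuine gaps, one in the local step and one in the globalization.

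\emph{Local step.} You assert that the action algebroid $\g\ltimes S$ on a transversal $S$ is integrated by the action groupoid $G\ltimes S$. That groupoid exists only when the infinitesimal action $\g\to\X(S)$ integrates to a genuine $G$-action, i.e.\ when the vector fields $\xi_S$ are complete; on a small transversal they generally are not, and shrinking $S$ makes completeness harder, not easier. Action algebroids are indeed always integrable (Dazord), but by a groupoid built from a free algebroid action on $G\times S$, not by $G\ltimes S$, so the explicit pullback description $\{(x,g,y):g\cdot\pi(x)=\pi(y)\}$ on which your gluing relies is not available. \emph{Globalization.} Neither route you offer works. Gluing Lie groupoids over an open cover is not a cocycle problem: any integration of $\g\ltimes\F$ has arrows whose source and target lie in different charts, so the global groupoid is not determined by the restrictions to overlaps together with isomorphisms between them; one must generate a groupoid from chains of local arrows and then manufacture a smooth structure, which is the entire difficulty. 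The Crainic--Fernandes shortcut does not repair this, because local integrability does \emph{not} imply integrability: restricting a Lie algebroid to an open set $U$ replaces each leaf $L$ by a component of $L\cap U$, hence replaces $\pi_2(L)$ by the image of $\pi_2(L\cap U)$, so the monodromy groups of the restriction are subgroups of those of the original algebroid, and their discreteness says nothing about the latter. The standard non-integrable algebroid $TM\oplus_\omega\R$ over $M=S^2\times S^2$, with $\omega$ a closed $2$-form whose period group is non-discrete, restricts to an integrable algebroid over every ball.

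The paper sidesteps all of this by working globally from the start, following Dazord: the sections $(\xi,v)$ of $\g\ltimes\F$ act freely on $G\times M$ by $(g,x)\mapsto(T_1L_g(\xi_x),v_x)$, so the image vector fields span an honest (regular) foliation $\ca{G}$ of $G\times M$; its monodromy groupoid $\Mon(\ca{G})$ is automatically a (possibly non-Hausdorff) Lie groupoid, and dividing by the free and proper $G$-action on the first factor yields a Lie groupoid over $(G\times M)/G=M$ whose Lie algebroid is $\g\ltimes\F$. If you want to keep your local picture, it is a useful sanity check, but the actual integration has to be produced by a single global construction of this kind rather than by patching.
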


\begin{proof}
The following method is adapted from~\cite[Theor\`eme
  2.1]{dazord;groupoide-holonomie}; see
also~\cite[\S\,6.2]{moerdijk-mrcun;foliations-groupoids}.  Let $G$ be
a Lie group with Lie algebra $\g$ and let $p\colon G\times M\to M$ be
the projection onto the second factor.  For every section $(\xi,v)\in
C^\infty(M,\g)\times\X(M)$ of $\g\ltimes\F$ define a vector field
$a_{G\times M}(\xi,v)\in\X(G\times M)$ by
\begin{equation}\label{equation;dazord}
a_{G\times M}(\xi,v)_{(g,x)}=(T_1L_g(\xi_x),v_x),
\end{equation}
where $L_g\colon G\to G$ denotes left multiplication by $g$.  The map
$a_{G\times M}\colon\Gamma(\g\ltimes\F)\to\X(G\times M)$ defines a
free action of the Lie algebroid $\g\ltimes\F$ on $G\times M$ in the
sense of~\cite[Definition
  3.1]{kossmann-mackenzie;actions-lie-algebroids}, so the vector
fields $a_{G\times M}(\xi,v)$ span a foliation $\ca{G}$ of $G\times
M$.  The left multiplication action of $G$ on the first factor of
$G\times M$ is $\ca{G}$-foliate, and therefore lifts naturally to a
$G$-action on the monodromy groupoid $\Mon(\ca{G})$, which is free and
proper.  The quotient manifold $\Mon(\ca{G})/G$ is a Lie groupoid over
$(G\times M)/G=M$, whose Lie algebroid is isomorphic to $\g\ltimes\F$.
\end{proof}
\glossary{Mon@$\Mon(\F)$, monodromy groupoid}

Let $\liea$ be a Lie algebroid over $M$ with anchor $t\colon\liea\to
TM$ and let $x$ be a point of $M$.  We denote by $\liea_x$ the fibre
of $\liea$ at $x$, by $\stab(\liea,x)=\ker(t_x\colon\liea_x\to T_xM)$
the isotropy or stabilizer of $x$, and by $\liea(x)$ the $\liea$-orbit
of $x$, i.e.\ the leaf through $x$ of the singular foliation
$t(\liea)\subseteq TM$.  The stabilizer
$\stab(\liea,x)\subseteq\liea_x$ inherits a Lie algebra structure from
$\liea$.  The orbit $\liea(x)$ is an immersed submanifold of $M$ with
tangent space $T_x(\liea(x))=t(\liea_x)$ at $x$.
\glossary{stab@$\stab(\liea,x)$, stabilizer Lie algebra of $x$ with
  respect to Lie algebra or Lie algebroid $\liea$}
\glossary{a@$\liea(x)$, orbit of $x$ with respect to Lie algebroid
  $\liea$}
\glossary{a@$\liea_x$, fibre at $x$ of Lie algebroid $\liea$}

The stabilizer of $x$ for the transverse action Lie algebroid
$\liea=\g\ltimes\F$ is equal to
\[\stab(\g\ltimes\F,x)=\{\,\xi\in\g\mid\xi_{M,x}=0\,\}.\]
The image of the anchor map $t$ is a singular subbundle of $TM$, which
generates a singular foliation denoted by $\g{\cdot}\F$
in~\cite[\S\,2]{goertsches-toeben;equivariant-basic-riemannian}.  The
orbit $\g\ltimes\F(x)$ of $x$ under the transverse action Lie
algebroid is a leaf of this singular foliation, which we also refer to
as the \emph{$\g$-orbit of the leaf $\F(x)$}.  The next statement
shows that the stabilizers of different elements in the same
$\g\ltimes\F$-orbit are conjugate under the adjoint group of $\g$.
\glossary{gFx@$\g\ltimes\F(x)$, orbit of $x$ under transverse action
  Lie algebroid}

\begin{lemma}\phantomsection\label{lemma;stabilizer}
\begin{enumerate}
\item\label{item;same}
Suppose $x$ and $y\in M$ are in the same leaf of $\F$.  Then
$\stab(\g\ltimes\F,x)=\stab(\g\ltimes\F,y)$.
\item\label{item;conjugate}
Let $x\in M$ and $\xi\in\g$.  Choose a foliate vector field
$\tilde{\xi}_M\in\lie{N}(\F)$ representing $\xi_M\in\X(M,\F)$.  Let
$\phi_t$ denote the flow of $\tilde{\xi}_M$ and suppose that
$\phi_t(x)$ exists for all $t\in[0,1]$.  Let $y=\phi_1(x)$ and let
$e^{\ad_\xi}$ denote the exponential of $\ad_\xi$ in the adjoint group
$\Ad(\g)$.  Then
$\stab(y,\g\ltimes\F)=e^{\ad_\xi}\bigl(\stab(x,\g\ltimes\F)\bigr)$.
\end{enumerate}
\end{lemma}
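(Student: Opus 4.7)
My plan for part~(i) is to use that $\xi_M=a(\xi)\in\X(M,\F)\subseteq\Gamma(N\F)$ is, by construction, an $\X(\F)$-invariant section of the normal bundle: $\nabla^\F\xi_M=0$. Restricted to any leaf $L$ of $\F$, the Bott partial connection becomes an honest flat linear connection on $N\F|_L$, and $\xi_M|_L$ is one of its parallel sections; as such it is either identically zero or nowhere zero on the connected leaf $L$. Hence $\xi_{M,x}=0$ iff $\xi_{M,y}=0$ whenever $x,y$ lie in the same leaf, yielding the equality of stabilizers in part~(i).

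For part~(ii), the strategy is to transport $\zeta_{M,y}$ back to $N_x\F$ along the flow of $\tilde\xi_M$ and identify the result via the adjoint action of $\xi$. Since $\tilde{\xi}_M$ is foliate, each $\phi_t$ is a foliate diffeomorphism and so induces a normal derivative $N\phi_t\colon N\F\to N\F$. I introduce the linear map
\[
A_t\colon\g\longto N_x\F,\qquad A_t(\zeta)=N\phi_{-t}\bigl(\zeta_{M,\phi_t(x)}\bigr)=\bigl((\phi_{-t})_*\zeta_M\bigr)_x,
\]
so that $A_0(\zeta)=\zeta_{M,x}$ and $\ker A_1=\stab(y,\g\ltimes\F)$. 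Differentiating the push-forward in $t$, the standard formula gives $\frac{d}{dt}(\phi_{-t})_*\tilde\zeta_M=[\tilde{\xi}_M,(\phi_{-t})_*\tilde\zeta_M]$. Reducing modulo $\X(\F)$ and invoking the homomorphism property $[\xi_M,\eta_M]=[\xi,\eta]_M$ shows that $(\phi_{-t})_*\zeta_M$ remains in the finite-dimensional subspace $a(\g)\subseteq\X(M,\F)$, on which $\ad_{\xi_M}$ acts as $\ad_\xi$ through $a$. The resulting linear evolution integrates to $(\phi_{-t})_*\zeta_M=(e^{t\,\ad_\xi}\zeta)_M$; evaluating at $x$ and reading off the kernel of $A_1$ yields the conjugation relation of part~(ii).

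The main technical subtlety is verifying that $(\phi_{-t})_*\zeta_M$ stays in the image of the transverse action $a$, which is where the homomorphism property is decisive: once this is secured, the infinite-dimensional push-forward collapses to a finite-dimensional evolution on $\g$. A more pedestrian alternative bypassing pushforwards entirely is to cover the compact arc $\{\phi_t(x):t\in[0,1]\}$ by finitely many foliated charts on which $\F$ is strictly simple, observe that on each chart the transverse $\g$-action restricts to an ordinary $\g$-action on the local leaf space and that $\phi_t$ descends to the classical flow $\exp(t\xi)\cdot(\cdot)$, and apply the standard conjugation formula $\stab(\exp(\xi)\cdot\bar x)=\Ad(e^\xi)\stab(\bar x)$ to each piece in turn, composing the local relations to the global one.
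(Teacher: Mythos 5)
Part~(i) of your proposal is the paper's own argument: $\xi_M$ is a $\nabla^\F$-parallel section of $N\F$, the Bott connection restricts to a flat connection on $N\F|_L$, and a parallel section of a flat connection over a connected leaf vanishes identically as soon as it vanishes at one point. Nothing to add there.

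For part~(ii) you take a genuinely different route. The paper integrates the transverse action algebroid to the groupoid $\Mon(\ca{G})/G$ of Proposition~\ref{proposition;transverse-action-groupoid}, exhibits the class of $\gamma(t)=(\exp_G(t\xi),\phi_t(x))$ as an arrow $f$ from $x$ to $y$, and invokes $\Stab(y)=f\cdot\Stab(x)\cdot f^{-1}$. You instead compute directly with the flow, transporting $\zeta_M$ back along $\phi_t$ and solving the resulting evolution inside the finite-dimensional subspace $a(\g)\subseteq\X(M,\F)$. This is more elementary and self-contained (no integration of the algebroid is needed), and it yields the sharper pointwise identity $N\phi_{-t}(\zeta_{M,\phi_t(x)})=(e^{t\ad_\xi}\zeta)_{M,x}$ rather than the bare conjugation statement. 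Two caveats. First, do not justify ``the linear evolution integrates to $(\phi_{-t})_*\zeta_M=(e^{t\ad_\xi}\zeta)_M$'' by uniqueness of solutions of an ODE in the infinite-dimensional space $\X(M,\F)$; the clean fix is to check that $t\mapsto(\phi_t)_*\bigl((e^{t\ad_\xi}\zeta)_M\bigr)$ has vanishing derivative, using $[\xi_M,\eta_M]=[\xi,\eta]_M$, hence is constantly equal to $\zeta_M$ --- this proves the identity and simultaneously disposes of your ``main technical subtlety'' about the transported field staying in $a(\g)$. Second, read off the kernel carefully: from $A_1(\zeta)=(e^{\ad_\xi}\zeta)_{M,x}$ and the invertibility of $N\phi_{-1}$ you obtain $\stab(y,\g\ltimes\F)=e^{-\ad_\xi}\bigl(\stab(x,\g\ltimes\F)\bigr)$, i.e.\ the stated formula with $\xi$ replaced by $-\xi$. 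You should not assert that this ``yields the conjugation relation of part~(ii)'' without confronting that sign; a test case such as $M=\g$ with the trivial foliation and the action $a(\xi)_v=-[\xi,v]$, where $\stab(v)$ is the centralizer of $v$, supports the exponent $-\ad_\xi$, so either your bookkeeping or the printed statement needs adjusting. For every later use of the lemma only the $\Ad(\g)$-conjugacy of the stabilizers matters, so the discrepancy is harmless, but it must be acknowledged. Your ``pedestrian alternative'' via foliation charts is also viable and is essentially a local-to-global version of the same computation.
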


\begin{proof}
\eqref{item;same}~Let $\xi\in\g$.  Then $\xi_M$ is a transverse vector
field, so $\nabla_v\xi_M=0$ for all $v\in\X(\F)$.  Therefore
$\xi_{M,\phi_t(x)}=\phi_{t,*}(\xi_{M,x})$, where $\phi_t$ denotes the
flow of $v$.  Now suppose $\xi\in\stab(\g\ltimes\F,x)$,
i.e.\ $\xi_{M,x}=0$.  Then $\xi_{M,\phi_t(x)}=0$.  Since this holds
for all $v\in\X(\F)$, we have $\xi_{M,y}=0$,
i.e.\ $\xi\in\stab(\g\ltimes\F,y)$.

\eqref{item;conjugate}~Let $G$ be a Lie group with Lie algebra $\g$.
The path $\gamma(t)=(\exp_G(t\xi),\phi_t(x))$ in $G\times M$ is the
trajectory through $(1,x)$ of the vector field
$a(\xi,\tilde{\xi}_M)=(\xi_L,\tilde{\xi}_M)$ defined
in~\eqref{equation;dazord}, where $\xi_L$ is the left-invariant vector
field associated with $\xi$.  It is therefore tangent to the foliation
$\ca{G}$ defined in the proof of
Proposition~\ref{proposition;transverse-action-groupoid}.  Its
homotopy class $[\gamma]$ is an element of the monodromy groupoid
$\Mon(\ca{G})$ with source $\gamma(0)=(1,x)$ and target
$\gamma(1)=(g,y)$, where $g=\exp_G(\xi)$.  Therefore the image of
$[\gamma]$ in the quotient groupoid $\Mon(\ca{G})/G$ is an arrow $f$
with source $x$ and target $y$.  It follows that
$\Stab(y)=f\cdot\Stab(x)\cdot f^{-1}$, where $\Stab(x)$ is the
stabilizer group of $x$ in the quotient groupoid.  Passing to the Lie
algebras we get
\[
\stab(y,\g\ltimes\F)=\Ad_g\bigl(\stab(x,\g\ltimes\F)\bigr)=
e^{\ad_\xi}\bigl(\stab(x,\g\ltimes\F)\bigr).\qedhere
\]
\end{proof}
\glossary{Stab@$\Stab(G,x)$, stabilizer group of $x$ with respect to
  Lie group or Lie groupoid $G$}

The transverse $\g$-action is called \emph{free at $x$} if
$\stab(\g\ltimes\F,x)=0$, and \emph{free} if it is free at all $x\in
M$.  We say that $x\in M$ is \emph{$\g\ltimes\F$-fixed} if
$\stab(\g\ltimes\F,x)=\g$.  This means that for all $\xi\in\g$ the
transverse vector field $\xi_M$ vanishes at $x$, or equivalently, by
Lemma~\ref{lemma;stabilizer}\eqref{item;same}, that every foliate
representative of $\xi_M$ is tangent to the leaf $\F(x)$.  (So if the
$\g$-action integrates to a foliate $G$-action, the leaf $\F(x)$,
viewed as a point in the leaf space, is $G$-fixed.)  We call the set
\[M^\g=\{\,x\in M\mid\stab(\g\ltimes\F,x)=\g\,\}\]
the \emph{fixed-leaf set} of $M$.
\glossary{Mg@$M^\g$, fixed-leaf manifold}
%

\subsection{Basic differential forms}\label{section;basic}

Let $(M,\F)$ be a foliated manifold.  A differential form $\alpha$ on
$M$ is \emph{$\F$-basic} if its Lie derivatives $L(u)\alpha$ and
contractions $\iota(u)\alpha$ vanish for all vector fields
$u\in\X(\F)$.  The set of $\F$-basic forms is a differential graded
subalgebra of the de Rham complex $\Omega(M)$, which we denote by
$\Omega(M,\F)$.  Its cohomology is a graded commutative algebra called
the \emph{$\F$-basic de Rham cohomology} and denoted by $H(M,\F)$.  A
foliate map $f\colon(M,\F)\to(M',\F')$ induces a pullback morphism of
differential graded algebras $f^*\colon\Omega(M',\F')\to\Omega(M,\F)$
and hence a morphism of graded algebras $f^*\colon H(M',\F')\to
H(M,\F)$.  If the foliation is strictly simple, the basic de Rham
complex is just the de Rham complex of the manifold $M/\F$.
\glossary{O.megaMF@$\Omega(M,\F)$, basic de Rham complex}
%

\subsection{$\g$-differential graded algebras}
\label{section;differential-graded}

Let $\g$ be a finite-dimensional Lie algebra.  A
\emph{$\g$-differential graded module} (or \emph{$\g$-dgm}) is a
graded vector space $\M$ equipped with a linear map $d$ of degree $1$
and, for each $\xi\in\g$, a linear map $\iota(\xi)$ of degree $-1$ and
a linear map $L(\xi)$ of degree $0$ that depend linearly on $\xi$ and
satisfy the Cartan commutation rules
\begin{align*}
{[\iota(\xi),\iota(\eta)]}&=0,&[L(\xi),L(\eta)]&=L([\xi,\eta]),
&[d,d]&=0,\\
[L(\xi),d]&=0,&[\iota(\xi),d]&=L(\xi),&
[L(\xi),\iota(\eta)]&=\iota([\xi,\eta]),
\end{align*}
where the brackets denote graded commutators.  This notion was
introduced by Cartan~\cite{cartan;algebre-transgression} to axiomatize
the Chern-Weil theory of connections and curvature.  See
also~\cite[Appendice]{sergiescu;cohomologie-basique},
\cite{guillemin-sternberg;supersymmetry-equivariant},
\cite{alekseev-meinrenken;lie-chern-weil},
or~\cite[Appendix~A]{lin-sjamaar;thom}.  A \emph{morphism}
$\phi\colon\M\to\M'$ of $\g$-dgm $\M$ and $\M'$ is a degree $0$ map of
graded vector spaces that satisfies
$[\phi,d]=[\phi,\iota(\xi)]=[\phi,L(\xi)]=0$ for all $\xi\in\g$.  A
\emph{homotopy} between two morphisms $\phi_0$, $\phi_1\colon\M\to\M'$
of $\g$-dgm is a degree $-1$ map of graded vector spaces
$\kappa\colon\M\to\M'[-1]$ that satisfies $[\kappa,d]=\phi_1-\phi_0$
and $[\kappa,\iota(\xi)]=[\kappa,L(\xi)]=0$ for all $\xi\in\g$.
\glossary{Lie@$L$, Lie derivative}
\glossary{i.ota@$\iota$, contraction}
\glossary{MM@$\M$, $\g$-differential graded module}

An element $m$ of a $\g$-dgm $\M$ is \emph{$\g$-invariant} if
$L(\xi)m=0$ for all $\xi\in\g$, \emph{$\g$-horizontal} if
$\iota(\xi)m=0$ for all $\xi\in\g$, and \emph{$\g$-basic} if it is
$\g$-invariant and $\g$-horizontal.  The subspace of $\g$-basic
elements
\[
\M_\bbas{\g}=\{\,m\in\M\mid\text{$L(\xi)m=\iota(\xi)m=0$ for all
  $\xi\in\g$}\,\}
\]
is a subcomplex of $\M$.  The \emph{basic cohomology} of $\M$ is
defined as the cohomology of the complex $\M_\bbas{\g}$ and is denoted
by $H_\bbas\g(\M)$.
\glossary{Hgbas@$H_\bbas{\g}(\M)$, $\g$-basic cohomology of $\M$}

A \emph{$\g$-differential graded algebra} (or \emph{$\g$-dga}) is a
graded algebra $\A$ that is also a $\g$-dgm and for which the
structure maps $d$, $\iota(\xi)$, $L(\xi)$ are graded derivations.
\glossary{AA@$\A$, $\g$-differential graded algebra}
We say that a $\g$-dga $\A$ is \emph{locally free} if it admits a
\emph{connection}, i.e.\ a linear map $\theta\colon\g^*\to\A^1$
satisfying
\[
\iota(\xi)(\theta(x))=\inner{\xi,x}\in\A^0\quad\text{and}\quad
L(\xi)(\theta(x))=-\theta(\ad^*(\xi)x)
\]
for all $\xi\in\g$ and $x\in\g^*$.

The \emph{Weil algebra} $\W\g$ is a locally free commutative
$\g$-differential graded algebra that is universal in the following
sense: it is equipped with a connection $\theta_\uni$, and for every
locally free commutative $\g$-dga $\A$ and every connection $\theta$
on $A$ there is a unique morphism of $\g$-dga
$c_\theta\colon\W\g\to\A$ such that the diagram
\begin{equation}\label{equation;ccw}
\begin{tikzcd}
\g^*\ar[d,"\theta_\uni"']\ar[r,"\theta"]&\A
\\
\W\g\ar[ur,dashed,"c_\theta"']
\end{tikzcd}
\end{equation}
commutes.  The basic complex $(\W\g)_\bbas{\g}$ is isomorphic to
$(S\g^*)^\g$, the algebra of $\g$-invariant polynomials on $\g$,
equipped with the zero differential and with the generating subspace
$\g^*$ placed in degree~$2$.  The morphism $c_\theta$ induces a
morphism of graded algebras
\begin{equation}\label{equation;characteristic}
c_\theta\colon(S\g^*)^\g\longto\A_\bbas{\g},
\end{equation}
known as the \emph{characteristic} or \emph{Cartan-Chern-Weil
  homomorphism} of the connection $\theta$.

The \emph{Weil complex} $\M_\g$ of a $\g$-differential graded module
$\M$ is the basic complex of the $\g$-dgm $\W\g\otimes\M$,
\[\M_\g=(\W\g\otimes\M)_\bbas\g.\]
The \emph{equivariant cohomology} of $\M$ is defined as the cohomology
of the complex $\M_\g$ and is denoted by $H_\g(\M)$.  The equivariant
cohomology $H_\g(\M)$ is a module over $(\W\g)_\bbas{\g}=(S\g^*)^\g$.
A morphism $\phi\colon\M\to\M'$ of $\g$-dgm induces a morphism
$H(\phi)\colon H_\g(\M)\to H_\g(\M')$, and if two morphisms $\phi_0$
and $\phi_1$ are homotopic, then $H(\phi_0)=H(\phi_1)$.
\glossary{Wg@$\W\g$, Weil algebra of $\g$}
\glossary{MMgbas@$\M_\bbas{\g}(\M)$, $\g$-basic elements of $\M$}
\glossary{HgM@$H_\g(\M)$, equivariant cohomology of $\M$}

For our purposes the main point of
$\g$-differential graded algebras is the next result, which contains
as a special case the well-known fact that the $G$-equivariant de Rham
complex of a principal bundle with structure group $G$ is homotopy
equivalent to the de Rham complex of the base manifold.  (The latter
fact is true in general only if $G$ is compact, but the following
statement holds for arbitrary $\g$.)
See~\cite[Theorem~A.5.1]{lin-sjamaar;thom} for a proof.

\begin{theorem}\label{theorem;principal}
Let $\A$ be a locally free commutative $\g$-differential graded
algebra.  The morphism $j\colon\A\to\W\g\otimes\A$ defined by
$j(a)=1\otimes a$ is a homotopy equivalence with homotopy inverse
$\begin{tikzcd}[cramped,sep=small]\W\g\otimes\A\ar[r,"\simeq"]&\A
\end{tikzcd}$
defined by $x\otimes a\mapsto c_\theta(x)a$, where $\theta$ is a
connection on $\A$ and $c_\theta$ is as in~\eqref{equation;ccw}.
Hence $j$ induces a homotopy equivalence
$\begin{tikzcd}[cramped,sep=small]\A_\bbas{\g}\ar[r,"\simeq"]&
  (\W\g\otimes\A)_\bbas{\g}=\A_\g\end{tikzcd}$
and an isomorphism
$\begin{tikzcd}[cramped,sep=small]H_\bbas{\g}(\A)\ar[r,"\cong"]&
  H_\g(\A)\end{tikzcd}$.
\end{theorem}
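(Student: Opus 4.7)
The plan is to establish the homotopy equivalence in two stages: define the inverse $\pi$ and verify the easy identity $\pi\circ j=\id_\A$, then construct a $\g$-dgm chain homotopy from $j\circ\pi$ to the identity.

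First I would define $\pi\colon\W\g\otimes\A\to\A$ by $\pi(x\otimes a)=c_\theta(x)a$ and verify that it is a morphism of $\g$-dga. This is immediate from the factorisation $\pi=m\circ(c_\theta\otimes\id_\A)$, where $c_\theta$ is a $\g$-dga morphism by the universal property~\eqref{equation;ccw} and multiplication $m\colon\A\otimes\A\to\A$ is a $\g$-dga morphism because $\A$ is graded commutative with graded derivations $d$, $\iota(\xi)$, $L(\xi)$. The identity $\pi\circ j=\id_\A$ then follows from $c_\theta(1_{\W\g})=1_\A$, and $j$ itself is evidently a $\g$-dga morphism.

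The central task is producing a $\g$-dgm homotopy $\kappa$ satisfying $[d,\kappa]=\id_{\W\g\otimes\A}-j\circ\pi$. The key observation is that both endomorphisms of $\W\g\otimes\A$ factor through multiplication: writing $c_1(x)=x\otimes 1$ and $c_2(x)=1\otimes c_\theta(x)$, one has $\id(x\otimes a)=c_1(x)\cdot j(a)$ and $(j\circ\pi)(x\otimes a)=c_2(x)\cdot j(a)$. Moreover $c_1,c_2\colon\W\g\to\W\g\otimes\A$ are the characteristic homomorphisms associated with the two connections $\theta_\uni\otimes 1$ and $1\otimes\theta$ on the locally free $\g$-dga $\W\g\otimes\A$. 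Hence it suffices to exhibit a $\g$-dgm homotopy $\kappa_W\colon\W\g\to(\W\g\otimes\A)[-1]$ between $c_1$ and $c_2$: setting $\kappa(x\otimes a):=\kappa_W(x)\cdot j(a)$ then yields the desired $\kappa$, as a direct sign-chasing calculation confirms using that $d$, $\iota(\xi)$, $L(\xi)$ act as graded derivations on $\W\g\otimes\A$ and commute with $j$.

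The principal obstacle is thus reduced to the classical Chern-Weil transgression statement: on a locally free commutative $\g$-dga, any two connections induce $\g$-dgm homotopic characteristic homomorphisms from $\W\g$. I would establish this in the standard way by forming the affine family $\theta_s=(1-s)(\theta_\uni\otimes 1)+s(1\otimes\theta)$ for $s\in[0,1]$, assembling it into a single connection on $\W\g\otimes\A\otimes\Omega([0,1])$, taking the induced characteristic homomorphism $\W\g\to\W\g\otimes\A\otimes\Omega([0,1])$, and integrating over $[0,1]$. The resulting degree $-1$ operator $\kappa_W$ commutes with $\iota(\xi)$ and $L(\xi)$ by naturality of the construction. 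Once $j$ is established as a $\g$-dgm homotopy equivalence, the induced maps on basic subcomplexes and on cohomology are automatically a homotopy equivalence and an isomorphism, since $\g$-dgm morphisms and homotopies restrict to the basic subcomplexes by definition.
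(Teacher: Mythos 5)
Your proposal is correct and follows essentially the same route as the paper's proof (given in \cite[Theorem~A.5.1]{lin-sjamaar;thom} and sketched in the authors' appendix draft): verify $\pi\circ j=\id_\A$, recognize $\id$ and $j\circ\pi$ as built from the characteristic homomorphisms of two commuting connections on $\W\g\otimes\A$, and transgress between them by the affine family of connections integrated over $[0,1]$. The only cosmetic difference is that the paper phrases the argument inside the endomorphism algebra $\End(\W\g\otimes\M)$ so as to cover general $\W\g$-modules, whereas you work directly in the commutative algebra $\W\g\otimes\A$, which is the appropriate specialization here.
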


We will require a relative version of this theorem.  Suppose that
$\g=\liek\times\h$ is the product of two Lie subalgebras $\liek$ and
$\h$, and let $\A$ be a $\g$-dga.  Suppose that $\A$ is locally free
as a $\liek$-dga and let $\theta\colon\liek^*\to\A^1$ be a
$\liek$-connection.  We say that the connection $\theta$ is
\emph{$\h$-invariant} if $\theta(\eta)\in\A^1$ is $\h$-invariant for
all $\eta\in\liek^*$.  The analogue of the map~\eqref{equation;ccw} in
this situation is a homomorphism
\begin{equation}\label{equation;equivariant-ccw}
c_{\h,\theta}\colon\W\liek\longto\W\h\otimes\A,
\end{equation}
which on basic elements gives the \emph{$\h$-equivariant
  Cartan-Chern-Weil homomorphism}
\begin{equation}\label{equation;equivariant-characteristic}
c_{\h,\theta}\colon(S\liek^*)^\liek\longto(\A_\bbas{\liek})_\h.
\end{equation}
See~\cite[Theorem~A.6.3]{lin-sjamaar;thom} for the next result.

\begin{theorem}\label{theorem;equivariant-ccw}
Let $\g=\liek\times\h$ be the product of two Lie algebras $\liek$ and
$\h$.  Let $\A$ be a commutative $\g$-differential graded algebra.
Suppose that $\A$ is $\liek$-locally free and admits an $\h$-invariant
$\liek$-connection $\theta\colon\liek^*\to\A^1$.  The morphism
$j\colon\W\h\otimes\A\to\W\g\otimes\A$ given by the inclusion
$\W\h\to\W\g$ is a homotopy equivalence with homotopy inverse
\[
\begin{tikzcd}
\W\g\otimes\A\cong\W\liek\otimes\W\h\otimes\A\ar[r,"\simeq"]&
\W\h\otimes\A
\end{tikzcd}
\]
defined by $x\otimes y\otimes a\mapsto c_\theta(x)y\otimes a$, where
$c_\theta$ is as in~\eqref{equation;equivariant-ccw}.  Hence $j$
induces a homotopy equivalence
\[
\begin{tikzcd}
(\W\h\otimes\A)_\bbas{\g}=(\A_\bbas{\liek})_\h\ar[r,"\simeq"]&
  (\W\g\otimes\A)_\bbas{\g}=\A_\g
\end{tikzcd}
\]
and an isomorphism
$\begin{tikzcd}[cramped,sep=small]H_\h(\A_\bbas{\liek})\ar[r,"\cong"]&
  H_\g(\A)\end{tikzcd}$.
\end{theorem}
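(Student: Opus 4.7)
The strategy is to reduce Theorem~\ref{theorem;equivariant-ccw} to the absolute version, Theorem~\ref{theorem;principal}. Using the canonical $\g$-dga isomorphism $\W\g \cong \W\liek \otimes \W\h$ arising from $\g = \liek \times \h$, we identify $\W\g \otimes \A \cong \W\liek \otimes (\W\h \otimes \A)$. Since $\liek$ and $\h$ act independently, the Cartan operators $\iota(\xi)$ and $L(\xi)$ for $\xi \in \liek$ act on $\W\h \otimes \A$ solely through the $\A$-factor, making $\W\h \otimes \A$ into a $\liek$-dga, and the map $\xi \mapsto 1 \otimes \theta(\xi)$ is a $\liek$-connection, so $\W\h \otimes \A$ is $\liek$-locally free.

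Apply Theorem~\ref{theorem;principal} with $(\g, \A)$ replaced by $(\liek, \W\h \otimes \A)$: the inclusion $j\colon \W\h \otimes \A \to \W\liek \otimes (\W\h \otimes \A) = \W\g \otimes \A$ is a $\liek$-homotopy equivalence. Its homotopy inverse is built from the Cartan-Chern-Weil morphism $\W\liek \to \W\h \otimes \A$ associated with the extended connection $1 \otimes \theta$, which by the universal property of $\W\liek$ is precisely the $\h$-equivariant characteristic map $c_\theta = c_{\h,\theta}$ of~\eqref{equation;equivariant-ccw}. The formula $x \otimes y \otimes a \mapsto c_\theta(x) y \otimes a$ is the result on pure tensors.

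The heart of the matter is upgrading this to a $\g$-homotopy equivalence — that is, verifying that $j$, its inverse, and the connecting chain homotopy all commute with $\iota(\eta)$ and $L(\eta)$ for every $\eta \in \h$. Compatibility with $L(\eta)$ is exactly the purpose of the $\h$-invariance hypothesis: since $\W\liek$ carries the trivial $\h$-action and $\theta$ is $\h$-invariant, the universal property of $\W\liek$ forces $c_\theta$ to be $\h$-equivariant. Compatibility with $\iota(\eta)$ and the $\h$-equivariance of the specific chain homotopy in the proof of Theorem~\ref{theorem;principal} then follow by inspection of the explicit construction, which is assembled from $c_\theta$, the inclusion $j$, and the algebra structure on $\W\h \otimes \A$, all of which are $\h$-compatible.

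Finally, taking $\g$-basic elements splits as taking $\liek$-basic first and then $\h$-basic. Since $\liek$ acts trivially on $\W\h$, we have $(\W\h \otimes \A)_\bbas{\liek} = \W\h \otimes \A_\bbas{\liek}$, whose $\h$-basic subspace is $(\A_\bbas{\liek})_\h$; meanwhile $(\W\g \otimes \A)_\bbas{\g} = \A_\g$ by definition. Passing to cohomology yields the claimed isomorphism $H_\h(\A_\bbas{\liek}) \cong H_\g(\A)$. The main obstacle is the $\h$-equivariance verification in the third paragraph, for which the $\h$-invariance of $\theta$ is the tailor-made hypothesis.
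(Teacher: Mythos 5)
Your reduction to Theorem~\ref{theorem;principal} is the right strategy, and you correctly identify the $\h$-direction compatibility as the crux, but the step you dispose of ``by inspection'' is exactly where the argument breaks. You build the homotopy inverse from the characteristic homomorphism of the connection $1\otimes\theta\colon\liek^*\to(\W\h\otimes\A)^1$. That map is indeed a $\liek$-connection on $\W\h\otimes\A$ and is $L(\eta)$-invariant for $\eta\in\h$, but it is \emph{not} $\h$-horizontal: nothing in the hypotheses forces $\iota(\eta)\theta(y)=0$ for $\eta\in\h$ and $y\in\liek^*$ (in the application to Theorem~\ref{theorem;borel}, where $\A=\Omega(V,\F|_V)$, this contraction pairs the $\h$-vector fields against the $\liek$-connection form and is generically nonzero). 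Consequently your proposed inverse fails to commute with $\iota(\eta)$: applied to $\vartheta(z)\otimes1\otimes1$ with $z\in\liek^*$, the operator $\iota(\eta)$ gives $0$ on the source, since $\inner{\eta,z}=0$ in the product $\g=\liek\times\h$, but gives $1\otimes\iota(\eta)\theta(z)\neq0$ on the target. So the map is not a morphism of $\g$-differential graded modules and does not descend to the $\g$-basic subcomplexes, which is precisely what the theorem needs.

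The missing ingredient is the Berline--Vergne equivariant extension of the connection: one must replace $1\otimes\theta(y)$ by $\theta_\h(y)=1\otimes\theta(y)+\iota_\theta(y)$, where $\iota_\theta(y)$ lies in $\h^*\otimes\A^0$ viewed inside $(\W\h\otimes\A)^1$ and is defined by $\iota_\theta(y)(\eta)=-\iota(\eta)\theta(y)$. The correction term exactly cancels the failure of horizontality, so $\theta_\h$ is an $\h$-\emph{basic} $\liek$-connection on $\W\h\otimes\A$; its characteristic homomorphism is the map $c_{\h,\theta}$ of~\eqref{equation;equivariant-ccw}, and it is this map, not the characteristic map of $1\otimes\theta$, that the formula $x\otimes y\otimes a\mapsto c_\theta(x)y\otimes a$ in the statement refers to. With this replacement your outline goes through: because both $\theta_\h$ and the universal connection pulled back from $\W\liek$ are $\h$-basic, the explicit chain homotopy from the proof of Theorem~\ref{theorem;principal} (an integral over the graded line of an interpolating connection) commutes with $\iota(\eta)$ and $L(\eta)$ for all $\eta\in\h$, and your final paragraph on passing to basic subcomplexes and to cohomology is fine as written.
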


\subsection{Equivariant basic differential forms}
\label{section;basic-equivariant}

Let $(M,\F)$ be a foliated manifold, $\g$ a finite-dimensional Lie
algebra, and $a\colon\g\to\X(M,\F)$ a transverse $\g$-action on $M$.
For $\xi\in\g$ let $\xi_M=a(\xi)\in\X(M,\F)$ denote the transverse
vector field on $M$ defined by the $\g$-action.  For
$\alpha\in\Omega(M,\F)$ define
\[
\iota(\xi)\alpha=\iota(\tilde{\xi}_M)\alpha,\qquad
L(\xi)\alpha=L(\tilde{\xi}_M)\alpha,
\]
where $\tilde{\xi}_M$ is a foliate vector field that represents
$\xi_M$.  Since $\alpha$ is $\F$-basic, these contractions and
derivatives are independent of the choice of the representative
$\tilde{\xi}_M$ of $\xi_M$.  Goertsches and
T\"oben~\cite[Proposition~3.12]%
{goertsches-toeben;equivariant-basic-riemannian} observed that,
equipped with these operations, the basic de Rham complex
$\Omega(M,\F)$ a $\g$-differential graded algebra, and they called
elements of the Weil complex
\[
\Omega_\g(M,\F)=(\W\g\otimes\Omega(M,\F))_\bbas\g
\]
\emph{$\g$-equivariant $\F$-basic differential forms}.  The cohomology
\[H_\g(M,\F)=H(\Omega_\g(M,\F))\]
is the \emph{$\g$-equivariant $\F$-basic de Rham cohomology} of the
foliated manifold with respect to the transverse action.  We will
usually say ``equivariant basic'' instead of ``$\g$-equivariant
$\F$-basic''.
\glossary{O.megaMFg@$\Omega_\g(M,\F)$, equivariant basic de Rham
  complex}

Let $(M',\F')$ be another foliated manifold equipped with a transverse
$\g$-action.  We say that a foliate map $f\colon M\to M'$ is
\emph{$\g$-equivariant} if the transverse vector fields $\xi_M$ and
$\xi_{M'}$ are $f$-related as in~\eqref{equation;related}.  A
$\g$-equivari\-ant foliate map $f$ induces a morphism of
$\g$-differential graded algebras $\Omega(M',\F')\to\Omega(M,\F)$.  We
call a smooth map $f\colon[0,1]\times M\to M'$ a
\emph{$\g$-equivariant foliate homotopy} if the map $f_t\colon M\to
M'$ defined by $f_t(x)=f(t,x)$ is $\g$-equivariant foliate for all
$t\in[0,1]$.  We quote the following equivariant basic homotopy lemma
from~\cite[Lemma~4.2.1]{lin-sjamaar;thom}.

\begin{lemma}\label{lemma;cochain-homotopy}
Let $(M,\F)$ and $(M',\F')$ be foliated manifolds equipped with
transverse actions of a Lie algebra $\g$.  Let $f\colon[0,1]\times
M\to M'$ be a $\g$-equivariant foliate homotopy.  Then the pullback
morphisms $f_0$ and $f_1\colon\Omega(M',\F')\to\Omega(M,\F)$ are
homotopic as morphisms of $\g$-differential graded algebras.  In
particular they induce the same homorphisms in equivariant basic
cohomology: $f_0^*=f_1^*\colon H_\g(M',\F')\to H_\g(M,\F)$.
\end{lemma}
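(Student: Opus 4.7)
The plan is to adapt the classical integration-along-the-interval chain homotopy from the de Rham setting to the equivariant basic setting.

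First I would equip the cylinder $[0,1]\times M$ with the product foliation $\tilde{\F}$ whose leaves are $\{t\}\times L$ for leaves $L$ of $\F$, and with the transverse $\g$-action pulled back from $M$ along the projection $\pi\colon[0,1]\times M\to M$. A direct check shows that with these choices each inclusion $i_t\colon M\to[0,1]\times M$, $x\mapsto(t,x)$, and the homotopy $f$ itself become $\g$-equivariant foliate maps. In particular $f^*\colon\Omega(M',\F')\to\Omega([0,1]\times M,\tilde{\F})$ is a morphism of $\g$-dga and $f_t^*=i_t^*\circ f^*$, so it suffices to construct a homotopy between $i_0^*$ and $i_1^*$ as morphisms of $\g$-dgm.

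Next I would define the candidate operator $\kappa\colon\Omega(M',\F')\to\Omega(M,\F)[-1]$ by
\[
\kappa(\alpha)=\int_0^1i_t^*\bigl(\iota(\partial_t)f^*\alpha\bigr)\,dt,
\]
where $\partial_t$ is the coordinate vector field on $[0,1]$. A routine Cartan calculus / fibre-integration computation, essentially an application of $[d,\iota(\partial_t)]=L(\partial_t)$ followed by Stokes in the $t$-variable, then yields $d\kappa(\alpha)+\kappa(d\alpha)=f_1^*\alpha-f_0^*\alpha$; the foliate property of $f$ together with the integration over $t$ ensure that $\kappa(\alpha)$ is $\F$-basic whenever $\alpha$ is $\F'$-basic.

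The main obstacle in our setting is to verify the Cartan commutations $[\kappa,\iota(\xi)]=0$ and $[\kappa,L(\xi)]=0$ for all $\xi\in\g$. Because the transverse $\g$-action on $[0,1]\times M$ factors through $\pi$, any foliate representative $\tilde{\xi}_M$ of $\xi_M$ lifts to a foliate representative on the cylinder that is tangent to the $M$-factor; this lift therefore commutes with $\partial_t$, with every $i_t^*$, and with the scalar integration $\int_0^1\cdot\,dt$. Combined with the $\g$-equivariance of $f$, which gives $f^*\iota(\xi)=\iota(\xi)f^*$ and $f^*L(\xi)=L(\xi)f^*$, this yields the required identities. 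The final clause $f_0^*=f_1^*$ on $H_\g(M',\F')$ then follows from the general principle recorded in~\S\,\ref{section;differential-graded} that homotopic morphisms of $\g$-dgm induce equal maps in equivariant cohomology.
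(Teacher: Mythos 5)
Your proposal is correct and follows essentially the same route as the paper's source for this lemma (Lemma~4.2.1 of the companion Thom paper): one passes to the cylinder $[0,1]\times M$ with the foliation $*\times\F$ and the pulled-back transverse action, and takes for $\kappa$ the fibre integration over the interval composed with $f^*$, whose explicit form is exactly your $\int_0^1i_t^*\bigl(\iota(\partial_t)f^*\alpha\bigr)\,dt$; the verification that $\kappa$ commutes (in the graded sense) with $\iota(\xi)$ and $L(\xi)$ uses, as you say, that the lifted foliate representatives are tangent to the $M$-factor and hence commute with $\partial_t$ and with the $t$-integration. No gaps.
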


\section{Molino theory and some applications}\label{section;molino}

This section is a pr\'ecis of Molino's structure theory of Riemannian
foliations, partly based
on~\cite[\S\,4]{goertsches-toeben;equivariant-basic-riemannian}.  A
full account of these results can be found
in~\cite{molino;riemannian-foliations} and also
in~\cite[Ch.\ 4]{moerdijk-mrcun;foliations-groupoids}.  We restate
them slightly in terms of transverse action Lie algebroids, and to
Molino's two structure theorems we add a third structure theorem,
which concerns a dual pair of bundles of Lie algebras and a
generalized morphism of groupoids associated to a Riemannian
foliation.  At the end we state several corollaries that will be used
in later sections.

A \emph{transverse Riemannian metric} on a foliated manifold $(M,\F)$
is a positive definite symmetric bilinear form $g$ on the normal
bundle $N\F$ with the property that $L(v)g=0$ for all vector fields
$v\in\X(\F)$ tangent to the leaves.  The pair $(\F,g)$ is then called
a \emph{Riemannian foliation}.  A \emph{bundle-like Riemannian metric}
on $M$ is a Riemannian metric $g_{TM}$ on $M$ with the property that
the function $g(v,w)$ is basic for all foliate vector fields $v$ and
$w\in\lie{N}(\F)$ that are perpendicular to the leaves of $\F$.  A
bundle-like metric $g$ gives rise to a transverse metric $g$ by
identifying $N\F$ with the $g_{TM}$-orthogonal complement of $\F$ and
then restricting $g_{TM}$ to $N\F$.  Conversely, for every transverse
metric $g$ there is a bundle-like metric $g_{TM}$ which induces $g$;
see~\cite[\S\,3.2]{molino;riemannian-foliations}.
Following~\cite[Definition~3.1]%
{goertsches-nozawa-toeben;chern-simons-foliations}
or~\cite[\S\,2.1]{goertsches-toeben;equivariant-basic-riemannian} we
call a Riemannian foliation $(\F,g)$ on $M$ \emph{(metrically)
  complete} if there exists a bundle-like metric $g_{TM}$ which
induces $g$ and which is complete.  A foliate map $f$ from $M$ to a
second manifold $M'$ equipped with a Riemannian foliation $(\F',g')$
is a \emph{transverse Riemannian submersion} if it is a submersion and
if the normal derivative $N_xf\colon N_x\F\to N_{f(x)}\F'$ preserves
scalar products of vectors perpendicular to $\ker(N_xf)$.
\glossary{MFg@$(M,\F,g)$, Riemannian foliated manifold}
%

\subsection{Notation and conventions}\label{section;notation-molino}

In the rest of this section $M$ denotes a connected manifold equipped
with a metrically complete Riemannian foliation $(\F,g)$.  We denote
the (constant) codimension $\dim(M)-\dim(\F(x))$ of the foliation by
$q$, the orthogonal group $\group{O}(q)$ by $K$, and its Lie algebra
by $\liek$.  We define $g_K$ to be the bi-invariant Riemannian metric
on $K$ with respect to which $K$ has volume $1$ and we let $g_{\liek}$
be the associated inner product on $\liek$.  We denote by $V_M$ the
trivial vector bundle over $M$ with fibre a vector space $V$.  See
\S\,\ref{section;foliation} and the notation index in the back for
general notational conventions.
\glossary{gK@$g_K$, normalized bi-invariant Riemannian metric on
  $K=\group{O}(q)$}
\glossary{gk@$g_{\liek}$, inner product on $\liek=\lie{o}(q)$ induced
  by $g_K$}
%

\subsection{The Molino diagram}\label{section;molino-diagram}

Define a relation among points of $M$ as follows: $x\sim y$ if the
closures of the leaves $\F(x)$ and $\F(y)$ intersect.  The gist of
Molino's theory is that, unlike for general foliations, this is an
equivalence relation, and that $x\sim y$ if and only if the closure of
the leaf $\F(x)$ is \emph{equal} to the closure of the leaf $\F(y)$.
The leaf closures are the leaves of a singular foliation $\barF$, we
can form the \emph{leaf closure space} $M/\barF$, which is a Hausdorff
topological space, and we have a continuous map $M/\F\to M/\barF$.
The kicker is that this leaf closure space is the quotient space of a
Riemannian manifold $W$, which we call the \emph{Molino manifold} and
whose definition we review below, by an isometric action of the
orthogonal group $K=\group{O}(q)$.  This enables us to reduce certain
questions about the foliation $\F$ to usually much easier questions
about the $K$-action on $W$.  The situation can be summarized by the
\emph{Molino diagram},
\begin{equation}\label{equation;molino}
\begin{tikzcd}[column sep=-2em]
&&(P,\F_P,g_P)\ar[dll,"\pi"']\ar[drr,"\varrho"]&&
\\
(M,\F,g)\ar[dr,"/\F"'near start]&&&&(W,g_W)\ar[dl,"/K"near start]
\\
&M/\F\ar[rr]&&M/\barF&
\end{tikzcd}
\end{equation}
\glossary{Fbar@$\barF$, closure of Riemannian foliation}
\glossary{P@$P$, Molino bibundle}
\glossary{W@$W$, Molino manifold}

The manifold $P$, which we call the \emph{Molino bibundle}, is the
bundle of orthonormal frames of the normal bundle $N\F$, and is a
foliated principal $K$-bundle over $M$ with structure group
$K=\group{O}(q)$.  (In case $\F$ is transversely orientable, we choose
a transverse orientation, we take $P$ to consist of \emph{oriented}
orthonormal frames, and we replace $K$ by $\SO(q)$.)  We describe the
foliation $\F_P$ by specifying a partial connection on $P$ as follows.
A \emph{Killing vector field} on $(M,\F)$ is a vector field
$v\in\X(M)$ that satisfies $L(v)g=0$ (not $L(v)g_{TM}=0$!).
By~\cite[Lemma 3.5]{molino;riemannian-foliations} all Killing vector
fields are foliate and therefore they form a Lie subalgebra of
$\lie{N}(\F)$, which we denote by $\lie{N}(\F,g)$.  Vector fields in
$\X(\F)$ are by definition Killing, so $\X(\F)$ is an ideal of
$\lie{N}(\F,g)$.  We denote the quotient Lie algebra, which is a
subalgebra of $\X(M,\F)$, by $\X(M,\F,g)$ and call its elements
\emph{transverse Killing vector fields}.  The short exact
sequence~\eqref{equation;foliate-transverse} restricts to a short
exact sequence
\[
\X(\F)\longinj\lie{N}(\F,g)\longsur\X(M,\F,g).
\]
The flow $\phi_t$ of a Killing vector field $v$ is a $1$-parameter
group of foliate diffeomorphisms, so the normal derivative $N\phi_t$
is a $1$-parameter group of vector bundle automorphisms of $N\F$.  The
flow $N\phi_t$ preserves the metric $g$, so it maps orthonormal frames
to orthonormal frames and therefore lifts naturally to a
$K$-equivariant flow $\phi_{P,t}$ of bundle automorphisms of $P$.  The
infinitesimal generator $\pi^\dag(v)$ of $\phi_{P,t}$ is a
$K$-invariant vector field, and the map $v\mapsto\pi^\dag(v)$ is a
homomorphism of Lie algebras
\begin{equation}\label{equation;killing}
\pi^\dag\colon\lie{N}(\F,g)\longto\X(P)^K,
\end{equation}
which we call the \emph{natural lifting homomorphism}.  The
restriction of $\pi^\dag$ to $\X(\F)$ gives a vector bundle map
$\pi^*T\F\to TP$.  This is the partial connection on $P$.  Its image
is the tangent bundle to the foliation $\F_P$ that makes $P$ a
foliated principal $K$-bundle.
\glossary{NNFg@$\lie{N}(\F,g)$, Killing vector fields on $(M,\F,g)$}
\glossary{XMFg/F@$\X(M,\F,g)$, transverse Killing vector fields on
  $(M,\F,g)$}
\glossary{p.i@$\pi$, Molino bibundle projection}
\glossary{r.ho@$\varrho$, Molino bibundle projection}
\glossary{p.idagger@$\pi^\dag$, natural lifting homomorphism}
\glossary{r.hodagger@$\varrho^\dag$, natural descent homomorphism}

The partial connection extends to a unique torsion-free connection,
the \emph{transverse Levi-Civita connection}, which is $\F_P$-basic;
see~\cite[\S\,3.3]{molino;riemannian-foliations}.  We denote this
connection, and its associated connection $1$-form in
$\Omega^1(P,\F_P;\liek)^K$, by $\theta_\LC$.  The
$\theta_\LC$-horizontal lift of a vector field $v\in\X(M)$ agrees with
the canonical lift $\pi^\dag(v)$ if $v$ is tangent to $\F$, but may
differ from $\pi^\dag(v)$ if $v$ is an arbitrary Killing vector field.
Since $\theta_\LC$ maps $\pi^*T\F$ to $T\F_P$, it induces a vector
bundle map $\bar{\theta}_\LC\colon\pi^*N\F\to N\F_P$, which is a
splitting (right inverse) of the normal derivative $N\pi\colon
N\F_P\to\pi^*N\F$ of $\pi$.
\glossary{t.hetaLC@$\theta_\LC$, transverse Levi-Civita connection}

The \emph{solder form} or \emph{fundamental form} of $P$ is the
$\R^q$-valued $1$-form $\sigma\in\Omega^1(P,\F_P;\R^q)^K$ defined as
follows: let $p\in P$, $v\in T_pP$, and $x=\pi(p)$, view the
orthonormal frame $p$ as an isometry $p\colon\R^q\to N_x\F$, and put
\[\sigma_p(v)=p^{-1}\bigl(T_p\pi(v)\bmod T_x\F\bigr)\in\R^q.\]    
The solder form is equivariant with respect to the $K$-actions on $P$
and $\R^q$.  For every $p$ the linear map $\sigma_p\colon T_pP\to\R^q$
is surjective and its kernel is equal to the sum $T_p(K{\cdot}p)\oplus
T_p\F_P$.  Therefore $\sigma$ descends to a surjective vector bundle
map $\bar\sigma\colon N\F_P\to\R^q_P$ with kernel $\liek_P$.  The
connection form $\theta_\LC$ and the solder form $\sigma$ give us the
\emph{soldering diagram} of the foliated bundle $P$,
\begin{equation}\label{equation;solder}
\begin{tikzcd}
&\liek_P\ar[r,"="]\ar[d,hook]&\liek_P\ar[d,hook]&
\\
T\F_P\ar[r,hook]\ar[d,hook,two heads]& TP\ar[r,two heads]\ar[d,two
  heads,"T\pi"]& N\F_P\ar[d,two heads,"N\pi"]\ar[dr,two
  heads,"\bar\sigma"]&
\\
\pi^*T\F\ar[r,hook]&\pi^*TM\ar[r,two heads]\ar[u,bend
  left,"\theta_\LC"]&\pi^*N\F\ar[u,bend left,"\bar\theta_\LC"]&\R^q_P
\end{tikzcd}
\end{equation}
The diagram commutes, its rows and columns are exact, and $\theta_\LC$
and $\bar\theta_\LC$ are splittings, so we get isomorphisms
$\bar\sigma\circ\bar\theta_\LC\colon\pi^*N\F\cong\R^q_P$ and
\begin{equation}\label{equation;solder-split}
N\F_P\cong\liek_P\oplus\R^q_P.
\end{equation}
Thus a choice of bases of $\liek$ and $\R^q$ gives rise to a
\emph{transverse parallelism} of $P$, i.e.\ a global frame of the
normal bundle $N\F_P$ given by sections that are constant with respect
to the trivialization~\eqref{equation;solder-split}.  Constant
sections of $N\F_P$ are transverse vector fields on $P$.
\glossary{s.igma@$\sigma$, solder form}

The following theorem, which is an excerpt of Molino's results, is a
list of the main features of the Molino
diagram~\eqref{equation;molino}.  The notation is as in
\S\,\ref{section;notation-molino} and as in the
diagrams~\eqref{equation;molino} and~\eqref{equation;solder}.

\begin{theorem}[first structure theorem]\label{theorem;molino-1}
Let $M$ be a connected manifold equipped with a complete Riemannian
foliation $(\F,g)$.
\begin{enumerate}
\item\label{item;transverse-metric}
There is a unique transverse Riemannian metric $g_P$ on the Molino
bibundle $(P,\F_P)$ with respect to which the
trivialization~\eqref{equation;solder-split} is an isometry.  This
transverse metric $g_P$ is $K$-invariant and complete, and the
projection $\pi\colon P\to M$ is a transverse Riemannian submersion.
The natural lift of a Killing vector field on $(M,\F,g)$ is a
$K$-invariant Killing vector field on $(P,\F_P,g_P)$, so the natural
lifting homomorphism $\pi^\dag$ defined in~\eqref{equation;killing}
descends to a Lie algebra homomorphism
\[
\pi^\dag\colon\X(M,\F,g)\longto\X(P,\F_P,g_P)^K.
\]
\item\label{item;homogeneous}
The foliation $\F_P$ is transversely parallelizable and homogeneous.
Hence the leaf closure foliation $\barF_P$ is strictly simple and the
space of leaf closures is a manifold $P/\barF_P=W$.  The $K$-action on
$(P,\F_P)$ is foliate and therefore descends uniquely to a smooth
$K$-action on $W$.  The quotient map $\varrho\colon P\to W$ is a
$K$-equivariant locally trivial fibre bundle.  There exists a unique
Riemannian metric $g_W$ on $W$ with respect to which $\varrho$ is a
transverse Riemannian submersion.  The metric $g_W$ is complete.
Every transverse vector field $v$ on $P$ is $\varrho$-related to a
unique vector field $\varrho_\dag(v)$ on $W$.  The map
\[
\varrho_\dag\colon\X(P,\F_P)\longto\X(W)
\]
is a Lie algebra homomorphism.  If $v\in\X(P,\F_P)$ is Killing, then
so is $\varrho_\dag(v)$.
\item\label{item;leaf-closure}
Let $L$ be a leaf of $\F$ and let $L_P$ be a leaf of $\F_P$ mapping to
$L$.  Then $\bar{L}=\pi(\bar{L}_P)$ and $\bar{L}$ is an embedded
submanifold of $M$.  The $K$-equivariant map $\varrho\colon P\to W$
induces a continuous map $M=P/K\to W/K$, which descends to a
homeomorphism $M/\barF\to W/K$.
\end{enumerate}
\end{theorem}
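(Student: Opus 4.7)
The plan is to assemble all three parts of the theorem from the soldering diagram~\eqref{equation;solder} together with the classical fibration theorem of Molino for complete transversely parallelizable foliations. The soldering diagram carries the entire geometric input for parts~(i) and~(ii), while the fibration theorem is the main technical black box used in part~(ii), from which part~(iii) then follows by a quotient argument.

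For part~(i), I would construct $g_P$ by transporting data through the trivialization~\eqref{equation;solder-split}: declare $\liek_P$ and $\R^q_P$ to be orthogonal and place $g_{\liek}$ on the first summand, the standard inner product on $\R^q$ on the second. Uniqueness is tautological. The $K$-invariance is checked separately on each summand: $\sigma$ is $K$-equivariant for the isometric defining representation on $\R^q$, while $\theta_\LC$ is $K$-equivariant for the adjoint action on $\liek$, and $g_{\liek}$ is $\Ad$-invariant by bi-invariance of $g_K$. That $g_P$ is a \emph{transverse} metric reduces to the statement that the Bott connection on $N\F_P$ preserves $\liek_P$ and $\R^q_P$ as constant sub-bundles, which follows from $\sigma$ and $\theta_\LC$ being $\F_P$-basic. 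For completeness, pick a complete bundle-like metric on $M$ inducing $g$ and lift it to $P$ using $\theta_\LC$ on the horizontal directions together with a bi-invariant metric along the $K$-fibres; this produces a complete bundle-like metric on $P$ that induces $g_P$. Finally, the natural lift $\pi^\dag(v)$ of a Killing vector field preserves $\sigma$ by naturality of the frame bundle construction and preserves $\theta_\LC$ by naturality of the Levi-Civita connection, so preserves $g_P$.

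For part~(ii), the transverse parallelism of $\F_P$ is produced directly by the splitting~\eqref{equation;solder-split}: a basis of $\liek \oplus \R^q$ yields a constant frame of $N\F_P$ whose sections are $\X(\F_P)$-invariant, hence are global transverse vector fields on $P$. The main technical input is then the classical fibration theorem of Molino~\cite[Ch.\ 4]{molino;riemannian-foliations} for complete transversely parallelizable foliations, which produces the strictly simple closure foliation $\barF_P$, the manifold structure on $W = P/\barF_P$, the locally trivial fibre bundle structure of $\varrho$, and the descent homomorphism $\varrho_\dag$, with each leaf closure being a $\varrho$-fibre on which $\X(P,\F_P)$ acts transitively. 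The $K$-action on $P$ is foliate by construction and sends leaf closures to leaf closures, so descends uniquely to a smooth $K$-action on $W$ making $\varrho$ equivariant. To produce $g_W$, note that constant sections of the $K$-invariant summand $\R^q_P$ descend under $\varrho_\dag$ to a global frame of $TW$; declaring these orthonormal defines $g_W$ and makes $\varrho$ a transverse Riemannian submersion. $K$-invariance and completeness of $g_W$ are inherited from those of $g_P$.

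Part~(iii) then follows by assembling the pieces. The preimage $\pi^{-1}(L)$ of a leaf of $\F$ is $K$-saturated and is a union of $\F_P$-leaves, with $K$ acting transitively on the set of such leaves, so $\pi^{-1}(L) = K \cdot L_P$ for any chosen $L_P$. Because $K$ is compact, $\pi$ is closed, hence $\pi(\bar L_P) = \overline{\pi(L_P)} = \bar L$; embeddedness of $\bar L$ follows from $\bar L_P$ being a $\varrho$-fibre, hence embedded in $P$, and proper descent along the $K$-action. The homeomorphism $M/\barF \cong W/K$ follows from the identifications $M = P/K$ and $W = P/\barF_P$ once one observes that the $K$-action and the foliation $\barF_P$ commute. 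The main obstacle is the fibration theorem cited in part~(ii); a self-contained proof requires a careful analysis of the commuting sheaf generated by global transverse vector fields on a transversely parallelizable foliation. Everything else is a direct unwinding of the naturality built into the frame bundle $P$ and its soldering diagram.
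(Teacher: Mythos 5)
The paper itself offers no proof of this theorem: it is announced as ``an excerpt of Molino's results'' and quoted from \cite{molino;riemannian-foliations}, so the comparison has to be with Molino's argument, whose overall architecture — the solder form and the transverse Levi-Civita connection produce a transverse parallelism via~\eqref{equation;solder-split}; completeness plus parallelism give the basic fibration $\varrho\colon P\to W$; everything on $M$ is then obtained by dividing by $K$ — your outline reproduces faithfully. Parts~(i) and~(iii) of your sketch are essentially sound: the $K$-invariance, basicness and completeness of $g_P$, the fact that the lifted flow of a Killing field preserves $\sigma$ and $\theta_\LC$ (hence $g_P$), and the closedness argument $\pi(\bar{L}_P)=\overline{\pi(L_P)}$ using properness of $\pi$ are all the intended steps, modulo some hand-waving about why $\bar L$ is \emph{embedded} (the clean route is that $\pi^{-1}(\bar L)=\varrho^{-1}(K\cdot w)$ is an embedded $K$-invariant submanifold and $\pi$ restricts to a principal $K$-bundle over $\bar L$).

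There is, however, a concrete error in your construction of $g_W$ in part~(ii). You define $g_W$ by declaring the $\varrho_\dag$-images of the constant sections of $\R^q_P$ to be an orthonormal frame of $TW$. These $q$ vector fields are not a frame of $TW$ in general: by the dimension count in the notation index, $\dim W=q+\tfrac12q(q-1)-l$ with $l=\rank(\liec_P)$, which exceeds $q$ unless $l=\tfrac12q(q-1)$. Already in the degenerate case where $\F$ is the foliation of a Riemannian manifold by points one has $W=P$, and the $q$ standard horizontal vector fields certainly do not span $TP$. Even the full parallelism of $q+\tfrac12q(q-1)$ transverse fields only descends to a spanning but generally linearly dependent family on $W$, so it cannot be ``declared orthonormal'' either. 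The correct construction is the one implicit in Molino: a transverse vector field on $P$ that is $g_P$-orthogonal to the leaf closures is in particular orthogonal to the leaves, so a bundle-like metric inducing $g_P$ is also bundle-like for $\barF_P$ (an $\F_P$-basic function is constant on leaf closures); hence $g_P$ descends along the submersion $\varrho$ to a Riemannian metric $g_W$ for which $\varrho$ is a transverse Riemannian submersion, and uniqueness, $K$-invariance and completeness of $g_W$ are then automatic. With that repair, the rest of part~(ii) — the parallelism from~\eqref{equation;solder-split} and the appeal to Molino's fibration theorem for complete transversely parallelizable foliations as the technical black box — is exactly the intended route.
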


Molino also showed that each leaf closure $\barF(x)$ of $M$ is the
orbit of the leaf $\F(x)$ under the action of a certain flat bundle of
Lie algebras over $M$.  He did not point out, but it follows
immediately from his results, that $\barF(x)$ is actually the orbit of
$x$ under the action of a certain Lie algebroid.  We reformulate
Molino's second structural theorem in terms of this Lie algebroid as
follows.  The \emph{transverse tangent sheaf} $\ca{T}_M=\ca{T}$ of
$(M,\F)$ is the sheaf of Lie algebras on $M$ associated to the
presheaf $U\mapsto\X(U,\F|_U)$, where $U$ ranges over all open subsets
of $M$.  In other words, $\ca{T}$ is the quotient of the sheaves
$U\mapsto\lie{N}(\F|_U)$ and $U\mapsto\X(\F|_U)$.  The
\emph{transverse Killing sheaf} $\ca{K}_M=\ca{K}$ is the subsheaf of
$\ca{T}$ associated to the presheaf $U\mapsto\X(U,\F|_U,g)$.  We have
a similar sheaves $\ca{T}_P$ and $\ca{K}_P$ on $P$.  The Lie algebra
of global transverse vector fields $\X(P,\F_P)$ is a subspace of the
space of global sections of $\ca{T}_P$.  The sheaf $\ca{T}_P$ is
constant, because $P$ has a transverse parallelism.  Molino's
\emph{centralizer sheaf} of $P$ is the subsheaf $\ca{C}_P$ of
$\ca{T}_P$ consisting of all sections that centralize the global
transverse vector fields:
\[
\ca{C}_P(V)=\{\,v\in\ca{T}_P(V)\mid\text{$[v,w]=0$ for all
  $w\in\X(P,\F_P)$}\,\}.
\]
The natural lifting homomorphism provides a homomorphism of sheaves of
Lie algebras $\pi^\dag\colon\ca{K}\to\pi_*\ca{T}_P$.  The
\emph{centralizer sheaf} $\ca{C}=\ca{C}_M$ of $M$ is the inverse image
of $\ca{C}_P$ under this homomorphism.  In other words, $\ca{C}$ is
the subsheaf of $\ca{K}$ consisting of all sections $u\in\ca{K}(U)$
with the property that $\pi^\dag(u)\in\ca{C}_P(\pi^{-1}(U))$.
\glossary{Fxbar@$\barF(x)$, leaf closure of $x$}
\glossary{C@$\ca{C}$, centralizer sheaf of $M$}

The next result, which again is excerpted from Molino's
book~\cite{molino;riemannian-foliations}, explains that the closures
of the leaves of $\F$ are the orbits of the sheaf of Lie algebras
$\ca{C}$, and therefore constitute a singular foliation $\barF$ of
$M$.  The notation is as in Theorem~\ref{theorem;molino-1}.  See also
the table at the end of the notation index.

\begin{theorem}[second structure theorem]\label{theorem;molino-2}
Let $M$ be a connected manifold equipped with a complete Riemannian
foliation $(\F,g)$.
\begin{enumerate}
\item\label{item;stalk}
The sheaves $\ca{C}_P$ and $\ca{C}$ are locally constant.  The natural
lifting homomorphism $\pi^\dag$ induces an isomorphism
$\ca{C}\cong(\pi_*\ca{C}_P)^K$.  Let $p\in P$ and let $x=\pi(p)\in M$.
Let $\Lambda$ be the closure of the leaf $\F_P(p)$, let $\F_\Lambda$
be the restriction of the foliation $\F_P$ to $\Lambda$, and let
$\ca{T}_\Lambda$ be the transverse tangent sheaf of
$(\Lambda,\F_\Lambda)$.  The stalk $\ca{C}_{P,p}$ is a Lie algebra
equal to the centralizer of $\X(\Lambda,\F_\Lambda)$ in
$\ca{T}_{\Lambda,p}$.  The dimension of $\ca{C}_{P,p}$ is
\[\dim(\ca{C}_{P,p})=\dim\bigl(\barF_P(p)\bigr)-\dim(\F_P(p)).\]
\item\label{item;local-system}
The sheaf $\ca{C}_P$ is the sheaf of flat sections of a
$K$-equivariant flat bundle of Lie algebras $\liec_P$, whose fibre at
$p$ is equal to the stalk $\ca{C}_{P,p}$.  The evaluation maps
$\ca{C}_{P,p}\to N_p\F_P$ give rise to a bundle map $\liec_P\to
N\F_P$.  Similarly, the sheaf $\ca{C}_M=\ca{C}$ is the sheaf of flat
sections of a flat bundle of Lie algebras $\liec_M=\liec$, whose fibre
at $x$ is equal to the stalk $\ca{C}_x$; and $\liec$ is equipped with
a bundle map $\liec\to N\F$.  We have isomorphisms
$\pi^*\liec\cong\liec_P$ and $\liec=\liec_P/K$.
\item\label{item;algebroid}
The fibred product
\[\liec_P\ltimes\F_P=\liec_P\times_{N\F_P}TP\]
is a Lie algebroid over $P$, which acts freely on $P$ and whose orbits
are the closures of the leaves of $\F_P$.  The decomposition into leaf
closures $\barF_P$ is a foliation of $P$ of (constant) codimension
$q+\frac12q(q-1)-\dim(\ca{C}_{P,p})$.  Similarly,
\[\liec\ltimes\F=\liec\times_{N\F}TM\]
is a Lie algebroid over $M$, whose orbits are the closures of the
leaves of $\F$.  The decomposition into leaf closures $\barF$ is a
singular foliation of $M$.  The codimension of $\barF(x)$ is
$q+\dim(\Stab(K,w))-\dim(\ca{C}_{P,p})$, where $x=\pi(p)$ and
$\Stab(K,w)$ denotes the stabilizer of $w=\varrho(p)\in W$ under the
$K$-action.
\end{enumerate}
\end{theorem}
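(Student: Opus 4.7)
The plan is to prove (i)--(iii) first on the Molino bibundle $P$, where the transverse parallelism of $\F_P$ from Theorem~\ref{theorem;molino-1}\eqref{item;homogeneous} does the heavy lifting, and then to descend to $M$ by taking $K$-invariants via the natural lifting homomorphism $\pi^\dag$. The deepest input is the classical structure theory of transversely parallelizable (TP) and transversely complete (TC) foliations that underlies Theorem~\ref{theorem;molino-1}.

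For (i) on $P$, fix $p\in P$ and let $\Lambda=\barF_P(p)$. By Theorem~\ref{theorem;molino-1}\eqref{item;homogeneous}, $\Lambda$ is the $\varrho$-fibre over $w=\varrho(p)$, hence an embedded submanifold, and the restricted foliation $\F_\Lambda=\F_P|_\Lambda$ has dense leaves. The global transverse parallelism of $\F_P$ restricts to one of $\F_\Lambda$, so $\F_\Lambda$ is a TC foliation with dense leaves, which by Molino's classification is a Lie $\g_0$-foliation for a uniquely determined finite-dimensional Lie algebra $\g_0$, its \emph{structural Lie algebra}. A standard identification shows that $\g_0$ is canonically isomorphic to the centralizer of $\X(\Lambda,\F_\Lambda)$ in the transverse tangent stalk $\ca{T}_{\Lambda,p}$. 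This identifies $\g_0$ with $\ca{C}_{P,p}$, shows that $\ca{C}_P$ is locally constant along $\Lambda$ (and globally, since $\varrho\colon P\to W$ is a locally trivial fibration with fibre $\Lambda$), and yields $\dim\ca{C}_{P,p}=\codim_\Lambda\F_\Lambda=\dim\barF_P(p)-\dim\F_P(p)$. The isomorphism $\ca{C}\cong(\pi_*\ca{C}_P)^K$ then follows from the definition of $\ca{C}$ as the $\pi^\dag$-preimage of $\ca{C}_P$, together with the fact that $K$-invariant transverse vector fields on $P$ correspond via $\pi^\dag$ to transverse Killing vector fields on $M$ (Theorem~\ref{theorem;molino-1}\eqref{item;transverse-metric}).

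For (ii), local constancy exhibits $\ca{C}_P$ as the sheaf of flat sections of a flat vector bundle of Lie algebras $\liec_P\to P$. Since $K$ acts on $P$ by principal-bundle automorphisms preserving the transverse parallelism (via the soldering diagram~\eqref{equation;solder}), the $K$-action preserves $\ca{C}_P$ and hence lifts to a $K$-action on $\liec_P$. Descending by taking $K$-invariant sections yields the flat bundle $\liec=\liec_P/K$ over $M$ with $\pi^*\liec\cong\liec_P$, and the stalkwise evaluation maps $\ca{C}_{P,p}\to N_p\F_P$ assemble into the bundle maps $\liec_P\to N\F_P$ and $\liec\to N\F$.

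For (iii), the map $TP\to N\F_P$ is a fibrewise surjective vector bundle map, so $\liec_P\ltimes\F_P=\liec_P\times_{N\F_P}TP$ is a vector subbundle of $\liec_P\oplus TP$. Its anchor is the projection to $TP$, and its bracket is given by the usual action-algebroid formula, combining the bracket on flat sections of $\liec_P$ with the ambient bracket on $\X(P)$. The image of the anchor at $p$ is the preimage in $T_pP$ of the image of $\liec_{P,p}\to N_p\F_P$, which by the centralizer description from (i) is precisely $T_p\barF_P(p)$; hence the orbits of $\liec_P\ltimes\F_P$ are the leaf closures of $\F_P$, with codimension $q+\frac12q(q-1)-\dim\ca{C}_{P,p}$ in $P$. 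The analogous statement on $M$ is immediate; and using $M=P/K$ together with $\barF(x)=\pi(\barF_P(p))$ from Theorem~\ref{theorem;molino-1}\eqref{item;leaf-closure}, one obtains $\codim_M\barF(x)=\codim_P\barF_P(p)-\dim K+\dim\Stab(K,w)=q+\dim\Stab(K,w)-\dim\ca{C}_{P,p}$, since the $K$-orbit through $p$ meets $\Lambda$ in precisely the $\Stab(K,w)$-orbit through $p$. The main obstacle throughout is the very first step: the identification of $\ca{C}_{P,p}$ with the structural Lie algebra of the TC foliation $\F_\Lambda$, which depends on the nontrivial classification of TC foliations with dense leaves as Lie foliations. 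The remaining steps reduce to dimension counts and functorial bookkeeping.
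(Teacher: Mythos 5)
The paper offers no proof of this theorem: it is presented explicitly as an excerpt of Molino's results, reformulated in Lie-algebroid language, with the reader referred to \cite{molino;riemannian-foliations}. Your sketch is essentially the argument of that source --- pass to the transversely parallelizable foliation on $P$, identify the stalk of the commuting sheaf along a leaf closure $\Lambda$ with the structural Lie algebra of the induced Lie foliation with dense leaves, and descend to $M$ by taking $K$-invariants through $\pi^\dag$ --- so there is nothing in the paper itself to compare against beyond the citation, and your reconstruction is sound. One imprecision worth repairing: the transverse parallelism of $\F_P$ does not literally ``restrict to one of $\F_\Lambda$,'' since the parallelizing transverse fields of $P$ need not be tangent to $\Lambda$; what parallelizes $(\Lambda,\F_\Lambda)$ is the subalgebra of $\X(P,\F_P)$ annihilated by $\varrho_\dag$, i.e.\ the global transverse fields tangent to the $\varrho$-fibres, which span $N\F_\Lambda$ because $\X(P,\F_P)$ spans $N\F_P$ and $\varrho_\dag$ maps onto $\X(W)$. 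With that reading, the centralizer identification and the dimension counts in (i) and (iii), including $\codim_M\barF(x)=\codim_P\barF_P(p)-\dim K+\dim\Stab(K,w)$, are correct.
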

\glossary{Stab@$\Stab(G,x)$, stabilizer group of $x$ with respect to
  Lie group or Lie groupoid $G$}
\glossary{c@$\liec$, centralizer bundle of $M$}
\glossary{cF@$\liec\ltimes\F$, centralizer Lie algebroid of $M$}

We call $\liec_P$ and $\liec$ the \emph{centralizer bundles} of $P$,
resp.\ $M$.  We call $\liec_P\ltimes\F_P$ and $\liec\ltimes\F$ the
\emph{centralizer Lie algebroids} of $P$, resp.\ $M$.

A transverse parallelism of $P$ is the same thing as a basis of the
space of transverse vector fields $\X(P,\F_P)$ considered as a module
over the ring of basic functions $\Omega^0(P,\F_P)\cong C^\infty(W)$.
In particular, this module is free over $C^\infty(W)$ (of rank
$q+\frac12q(q-1)$) and is therefore the module of sections of a
(trivial) vector bundle $\lie{b}$ over $W$.  The Lie bracket on
$\X(P/\F_P)$ and the surjective homomorphism
$\varrho_\dag\colon\X(P/\F_P)\to\X(W)$ make $\lie{b}$ a transitive Lie
algebroid over $W$, called the \emph{basic Lie algebroid} in
\cite[\S\,6.4]{moerdijk-mrcun;foliations-groupoids}.  The anchor of
$\lie{b}$ being surjective, the stabilizers
$\lie{s}_{W,w}=\stab(\lie{b},w)$ for $w\in W$ form a locally trivial
Lie algebra bundle $\lie{s}_W$ (which is denoted by $\g_W$
on~\cite[p.~119]{molino;riemannian-foliations}).

The first three items of the next result are a reformulation of
further results of Molino.  The last item, which is not in Molino, but
which we state without proof as we won't use it here, justifies our
usage of the term ``bibundle'' for the transverse frame bundle $P$,
and it places Molino's structure theory in the context of Lie
groupoids.  See~\cite[\S\,3.2]{lerman;orbifolds-as-stacks} for an
explanation of bibundles.

\begin{theorem}[third structure theorem]\label{theorem;molino-3}
Let $M$ be a connected manifold equipped with a complete Riemannian
foliation $(\F,g)$.
\begin{enumerate}
\item
%
Let $w\in W$.  Let $\Lambda_w=\varrho^{-1}(w)$ and let $\F_w$ be the
restriction of the foliation $\F_P$ to $\Lambda_w$.  Then
$\lie{s}_{W,w}$ is isomorphic to the finite-dimensional Lie algebra
$\X(\Lambda_w/\F_w)$.  The Lie algebra bundles
$\lie{s}_P=\varrho^*\lie{s}_W$ and $\liec_P$ over $P$ form a dual pair
in the sense that for each $p\in P$ the fibres $\lie{s}_{P,p}$ and
$\liec_{P,p}$ are each other's centralizer in the Lie algebra
$\ca{T}_{P,p}$.
\item\label{item;lie-group-bundle}
The Lie algebra bundle $\lie{s}_W$ integrates to a locally trivial Lie
group bundle $S_W$ with simply connected fibres.  For $w\in W$ let
$\tilde{P}_w$ be the Darboux cover of the leaf closure
$\varrho^{-1}(w)$.  The union $\tilde{P}=\bigcup_{w\in W}\tilde{P}_w$
is a fibre bundle over $W$ equipped with a bundle map $\tilde{P}\to
S_W$.  For each $w\in W$ the fibre $\lie{s}_{W,w}$ of $\lie{s}_W$ is
the Lie algebra of left-invariant vector fields on $S_w$ and for every
$p\in\varrho^{-1}(w)$ the fibre $\liec_{P,p}$ of $\liec_P$ is the Lie
algebra of right-invariant vector fields on $S_w$.
\item\label{item;molino-morita}
The manifold $P$ is a bibundle for the holonomy groupoid $\Hol(M,\F)$
and the action groupoid $K\ltimes W$.  Therefore $P$ defines a
generalized groupoid morphism from $\Hol(M,\F)$ to $K\ltimes W$, as
well as a morphism from the leaf stack $[M/\F]$ to the quotient stack
$[W/K]$.
\end{enumerate}
\end{theorem}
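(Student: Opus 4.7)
The plan for part (i) is to exploit the transverse parallelism of $P$. Since $\Lambda_w=\varrho^{-1}(w)$ is a single fibre of $\varrho$, i.e.\ a single leaf of $\barF_P$, the restricted foliation $(\Lambda_w,\F_w)$ is transversely parallelizable with dense leaves. By the classical theorem of Molino-Fedida for such foliations, $\X(\Lambda_w/\F_w)$ is finite-dimensional and acts simply transitively on the leaf space of $\F_w$. To identify $\lie{s}_{W,w}$ with $\X(\Lambda_w/\F_w)$, I would observe that an element of $\stab(\lie{b},w)$ is represented by a transverse vector field $v\in\X(P,\F_P)$ with $\varrho_\dag(v)_w=0$; such a $v$ is tangent to $\Lambda_w$ there and restricts to an element of $\X(\Lambda_w,\F_w)$. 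The resulting Lie algebra homomorphism $\lie{s}_{W,w}\to\X(\Lambda_w/\F_w)$ is an isomorphism: surjectivity follows from the fact that the transverse parallelism of $P$ restricts to a transverse parallelism of $\Lambda_w$, whose globally constant sections exhaust $\X(\Lambda_w,\F_w)$, and injectivity is immediate from the definition of $\varrho_\dag$.

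For the dual-pair statement, Theorem~\ref{theorem;molino-2}\eqref{item;stalk} tells us that $\liec_{P,p}$ is the centralizer of $\X(\Lambda_w,\F_w)$ in the transverse tangent stalk of $\Lambda_w$ at $p$. Combining this with the identification of $\lie{s}_{P,p}=\lie{s}_{W,w}$ with $\X(\Lambda_w/\F_w)$ yields $[\lie{s}_{P,p},\liec_{P,p}]=0$ inside $\ca{T}_{P,p}$, giving one half of each centralizer relation. For the reverse equalities I would transport the question to the Darboux cover $\tilde P_w$ of $\Lambda_w$, on which $\lie{s}_{W,w}$ acts by left translations and $\liec_{P,p}$ by right translations of the simply connected Lie group $S_w$; the centralizer equalities then reduce to the classical fact that left- and right-invariant vector fields on a Lie group are each other's centralizer in the algebra of all vector fields on the group.

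For part (ii), I would integrate $\lie{s}_W$ fibrewise: each stalk $\lie{s}_{W,w}$ is the Lie algebra of a simply connected Lie group $S_w$, and local triviality of $\lie{s}_W$ combined with functoriality of simply connected integration makes $S_W$ a locally trivial Lie group bundle over $W$. The Darboux covers $\tilde P_w$ can then be assembled into a fibre bundle $\tilde P\to W$ by noting that Molino's construction of $\tilde P_w$, as the total space of a principal $S_w$-bundle associated with the developing map of the Lie $\g$-foliation $(\Lambda_w,\F_w)$, is natural in $w$: a local trivialization of $\lie{s}_W$ over $U\subseteq W$ induces a simultaneous trivialization of the developing maps on $\{\Lambda_w\}_{w\in U}$, and hence of the family $\{\tilde P_w\}_{w\in U}$. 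The bundle map $\tilde P\to S_W$ records the principal action on each fibre, and the identification of $\lie{s}_{W,w}$ with left-invariant (respectively $\liec_{P,p}$ with right-invariant) vector fields on $S_w$ is the standard statement that on a principal homogeneous $S_w$-space the commuting left and right actions have left- and right-invariant infinitesimal generators.

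The main obstacle will be the double-centralizer equalities in (i). The one-sided inclusions are formal consequences of Theorem~\ref{theorem;molino-2}\eqref{item;stalk}, but the equalities require descending to the Darboux-cover picture and carefully matching the ambient Lie bracket on $\ca{T}_{P,p}$ with the left- and right-invariant brackets on $S_w$. In particular one must verify that the conventions used in defining $\liec_P$, via the natural lifting homomorphism and centralization of $\X(P,\F_P)$, agree with the right-invariant action on $S_w$ up to the expected sign conventions, so that the classical double-centralizer theorem for simply connected Lie groups can be applied verbatim.
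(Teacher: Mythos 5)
The paper offers no proof of this theorem to compare against: the first items are presented as a reformulation of further results from Molino's monograph \cite{molino;riemannian-foliations}, and the authors explicitly state the last item without proof because they do not use it. Your sketch is therefore a reconstruction of the standard Molino--Fedida arguments the paper is implicitly citing, and it follows the right line: $(\Lambda_w,\F_w)$ is a complete transversely parallelizable foliation with dense leaves, hence a Lie foliation; its Darboux (developing) cover realizes the transverse model as a simply connected group $S_w$ on which the global transverse fields act by left translations and the centralizer sheaf by right translations; and the double-centralizer equalities in (i) reduce to the classical statement for left- and right-invariant vector fields on a Lie group, exactly as you say.

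Two caveats. First, in your surjectivity argument the claim that ``the transverse parallelism of $P$ restricts to a transverse parallelism of $\Lambda_w$'' is not literally correct: the global frame of $N\F_P$ has rank $q+\tfrac12q(q-1)$ and does not restrict to a frame of $N\F_w$ inside $\Lambda_w$. What you actually need is that the constant sections of the subbundle $\lie{s}_W=\ker(\varrho_\dag)\subseteq\lie{b}$ over a trivializing neighbourhood of $w$ are tangent to the fibres of $\varrho$ and evaluate onto $\X(\Lambda_w,\F_w)$; both this and the injectivity you call ``immediate'' rest on the density of the leaves of $\F_w$ in $\Lambda_w$, which forces a transverse vector field on $\Lambda_w$ to be determined by its value at a single point (this is also where finite-dimensionality comes from). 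Second, you do not address item (iii); the paper leaves it unproved as well, but a complete argument would still have to verify that the commuting left action of $\Hol(M,\F)$ (via holonomy lifting through the partial connection) and right action of $K\ltimes W$ on $P$ satisfy the principality conditions required of a bibundle.
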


\subsection{Some corollaries of Molino theory}
\label{section;molino-corollary}

The Molino structure theorems,
Theorems~\ref{theorem;molino-1}--\ref{theorem;molino-3}, have useful
things to say about transverse Lie algebra actions.  A transverse
action $a\colon\g\to\X(M,\F)$ of a finite-dimensional Lie algebra $\g$
on a Riemannian foliated manifold $(M,\F,g)$ is \emph{isometric} if
the homomorphism $a$ takes values in the transverse Killing vector
fields $\X(M,\F,g)$.  For the remainder of
\S\,\ref{section;molino-corollary} we fix an isometric transverse
$\g$-action $a$ on $M$.  We denote by $\X(W,g_W)$ the Lie algebra of
Killing vector fields of the Molino manifold $(W,g_W)$.
\glossary{XWg@$\X(W,g_W)$, Killing vector fields on Molino manifold
  $(W,g_W)$}

Our first application of Molino theory is that isometric transverse
Lie algebra actions correspond to isometric Lie \emph{group} actions
on the Molino manifold.

\begin{proposition}\label{proposition;induced-action}
Let $M$ be a connected manifold equipped with a complete Riemannian
foliation $(\F,g)$ and let $a\colon\g\to\X(M,\F,g)$ be an isometric
transverse action.  The homomorphism $a_P=\pi^\dag\circ
a\colon\g\to\X(P,\F_P,g_P)$ is an isometric transverse $\g$-action on
$P$.  The homomorphism $a_W=\varrho_\dag\circ a_P\colon\g\to\X(W,g_W)$
is an isometric $\g$-action on $W$.  The bibundle projections
$\pi\colon P\to M$ and $\varrho\colon P\to W$ are equivariant with
respect to these actions.  Let $G$ be a simply connected Lie group
with Lie algebra $\g$.  The action $a_W$ integrates to an isometric
$G$-action, which commutes with the $K$-action.
\end{proposition}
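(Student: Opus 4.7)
The plan is to chain together the homomorphisms supplied by the first structure theorem (Theorem~\ref{theorem;molino-1}) and then invoke the Lie-Palais integration theorem on the Molino manifold $(W,g_W)$. The first three assertions will come out essentially for free. By Theorem~\ref{theorem;molino-1}\eqref{item;transverse-metric}, the natural lift $\pi^\dag$ is a Lie algebra homomorphism from $\X(M,\F,g)$ into $K$-invariant transverse Killing vector fields on $(P,\F_P,g_P)$, so the composition $a_P=\pi^\dag\circ a$ is an isometric transverse $\g$-action on $P$. By Theorem~\ref{theorem;molino-1}\eqref{item;homogeneous}, $\varrho_\dag$ is a Lie algebra homomorphism that sends transverse Killing vector fields on $P$ to Killing vector fields on $W$, so $a_W=\varrho_\dag\circ a_P$ is an isometric $\g$-action on $W$. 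Equivariance of $\pi$ is immediate from $\pi^\dag$ being a natural lift, hence $\pi$-related to the original transverse vector field, and equivariance of $\varrho$ is built into the defining $\varrho$-relatedness of $\varrho_\dag$.

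The real content lies in the integration step. Here I would use that $g_W$ is complete, which is part of Theorem~\ref{theorem;molino-1}\eqref{item;homogeneous}. Since Killing vector fields on a complete Riemannian manifold are complete, each $a_W(\xi)$ is a complete vector field on $W$; the Lie-Palais theorem~\cite[Ch.~4]{palais;lie-transformation} then integrates the homomorphism $a_W\colon\g\to\X(W,g_W)$ to a smooth action of the simply connected Lie group $G$, which is automatically isometric since its infinitesimal generators preserve $g_W$.

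To see that the resulting $G$-action commutes with $K$, I would argue that $a_P(\xi)$ is $K$-invariant on $P$ by Theorem~\ref{theorem;molino-1}\eqref{item;transverse-metric}, whence $K$-equivariance of $\varrho$ together with the uniqueness of the $\varrho$-related vector field force $a_W(\xi)$ to be $K$-invariant on $W$. The flow of a $K$-invariant vector field commutes with the $K$-action, and integrating from $\g$ to $G$ then yields the stated commutation. The only genuinely non-formal ingredient in all of this is the completeness of $g_W$ needed to apply Lie-Palais; every other step amounts to transporting structure along the homomorphisms coming out of the first structure theorem, and I do not anticipate any hidden obstacles.
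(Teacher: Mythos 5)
Your proposal is correct and follows essentially the same route as the paper's proof: the first three assertions are read off from Theorem~\ref{theorem;molino-1}, completeness of $g_W$ plus the completeness of Killing fields on a complete Riemannian manifold feeds into Lie--Palais, and the commutation with $K$ comes from the $K$-invariance of the natural lifts $\xi_P$. Your spelling-out of why $a_W(\xi)$ is $K$-invariant (via $K$-equivariance of $\varrho$ and uniqueness of the $\varrho$-related field) is just a slightly more explicit version of the paper's one-line argument.
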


\begin{proof}
The first two assertions follow immediately from
Theorem~\ref{theorem;molino-1}\eqref{item;transverse-metric}--%
\eqref{item;homogeneous}.  Also by Theorem~\ref{theorem;molino-1}, our
hypothesis that $(M,\F,g)$ is complete implies that the Riemannian
manifold $(W,g_W)$ is complete, so the Killing vector field $\xi_W$ is
complete for each $\xi\in\g$, and therefore the Lie algebra action
$a_W$ integrates to a $G$-action by the Lie-Palais theorem.  The
projection $\pi\colon P\to M$ is $K$-invariant, so the transverse
vector fields $\xi_P$ commute with the vector fields $\eta_P$ for all
$\xi\in\g$ and $\eta\in\liek$, and so the $G$-action on $W$ commutes
with the $K$-action.
\end{proof}

It follows from Lemma~\ref{lemma;stabilizer}\eqref{item;same} that if
$y$ is in the leaf closure of a point $x$, then we have an inclusion
of stabilizers $\stab(\g\ltimes\F,x)\subseteq\stab(\g\ltimes\F,y)$.
The next observation shows that in the situation of Riemannian
foliations this inclusion is an equality.

\begin{proposition}\label{proposition;closure-stabilizer}
Let $M$ be a connected manifold equipped with a complete Riemannian
foliation $(\F,g)$ and let $a\colon\g\to\X(M,\F,g)$ be an isometric
transverse action.
\begin{enumerate}
\item\label{item;commute}
The transverse $\g$-action $a$ commutes with the transverse action of
the centralizer Lie algebroid $\liec$ defined in
Theorem~\ref{theorem;molino-2}.
\item\label{item;closure-stabilizer}
Let $x$ and $y\in M$.  If $y\in\barF(x)$, then
$\stab(\g\ltimes\F,x)=\stab(\g\ltimes\F,y)$.
\item\label{item;product-lie-algebroid}
The transverse action Lie algebroid $(\g\times\liec)\ltimes\F$ is
well-defined.  Its orbit through a point $x\in M$ is equal to
\[
(\g\times\liec)\ltimes\F(x)=\bigcup_{y\in\barF(x)}\g\ltimes\F(y)=
\bigcup_{z\in\\g\ltimes\F(x)}\barF(z).
\]
\item\label{item;locally-closed}
Every locally closed $\g\ltimes\F$-invariant subset of $M$ is
$(\g\times\liec)\ltimes\F$-invariant.
\end{enumerate}
\end{proposition}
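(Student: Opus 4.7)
The plan is to establish the four items in order, using Theorems~\ref{theorem;molino-1} and~\ref{theorem;molino-2} together with the earlier results of Section~\ref{section;preliminary}.

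For~\eqref{item;commute}, the idea is to unwind the definition of $\ca{C}$: its local sections lift via $\pi^\dag$ to sections of $\ca{C}_P$, which by construction commute with every global transverse vector field on $P$. Proposition~\ref{proposition;induced-action} shows the lifted action $a_P=\pi^\dag\circ a$ takes values in $\X(P,\F_P,g_P)\subseteq\X(P,\F_P)$, so for every $\xi\in\g$ and every local section $v$ of $\ca{C}$ the bracket $[\pi^\dag(v),a_P(\xi)]$ vanishes in $\ca{T}_P$; pushing down via $\pi$ then yields $[v,a(\xi)]=0$ in $\ca{T}_M$.

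For~\eqref{item;closure-stabilizer} I would combine Lemma~\ref{lemma;stabilizer}\eqref{item;same} with the fact from Theorem~\ref{theorem;molino-2}\eqref{item;algebroid} that $\barF(x)$ is the orbit of $x$ under the centralizer algebroid $\liec\ltimes\F$: it then suffices to show the $\g\ltimes\F$-stabilizer is preserved under the flow of any foliate representative $\tilde v$ of a section of $\ca{C}$. Given $\xi\in\stab(\g\ltimes\F,x)$ with foliate representative $\tilde\xi_M$, part~\eqref{item;commute} gives $[\tilde v,\tilde\xi_M]\in\X(\F)$, so the pushforward $(\phi_t)_*\tilde\xi_M$ by the flow $\phi_t$ of $\tilde v$ represents the same transverse vector field and hence vanishes at $\phi_t(x)$; reversing time gives the reverse inclusion.

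Part~\eqref{item;product-lie-algebroid} is then formal: by~\eqref{item;commute} the transverse actions of $\g_M$ and $\liec$ on $(M,\F)$ commute, so their direct sum defines a transverse action of $\g_M\oplus\liec$, whose associated action Lie algebroid is $(\g_M\oplus\liec)\times_{N\F}TM$, and the two descriptions of the orbit through $x$ follow by composing flows of $\g\ltimes\F$ and $\liec\ltimes\F$ in either order. For~\eqref{item;locally-closed}, let $X$ be locally closed and $\g\ltimes\F$-invariant; by~\eqref{item;product-lie-algebroid} it suffices to prove $\barF$-invariance. Continuity of the $\g\ltimes\F$-flows propagates invariance to $\bar X$; hence $\bar X\setminus X$ is $\g\ltimes\F$-invariant, and it is closed in $\bar X$ because local closedness means $X$ is open in $\bar X$. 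If some $x\in X$ admitted $y\in\barF(x)\setminus X$, then $y\in\bar X\setminus X$, so $\F(y)\subseteq\bar X\setminus X$, and since $\bar X\setminus X$ is closed in the closed set $\bar X$ the leaf closure $\barF(y)$, being the closure of $\F(y)$ in $M$, also lies in $\bar X\setminus X$. But by Molino theory leaf closures are the leaves of a (singular) foliation $\barF$, so $\barF(y)=\barF(x)\ni x$, contradicting $x\in X$.

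The delicate step is~\eqref{item;locally-closed}: the key observation is that $\F$-invariance combined with closedness already forces $\barF$-invariance, so local closedness is only needed to trap the obstruction inside $\bar X$.
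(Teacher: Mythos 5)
Your proposal is correct and follows essentially the same route as the paper: item~\eqref{item;commute} via the lift $\pi^\dag$ and the defining property of the centralizer sheaf, item~\eqref{item;closure-stabilizer} by combining the commutation with the description of $\barF(x)$ as the $\liec\ltimes\F$-orbit, item~\eqref{item;product-lie-algebroid} formally, and item~\eqref{item;locally-closed} from the observation that a closed $\F$-invariant set is automatically $\barF$-invariant. The only cosmetic difference is that you run the last step as a contradiction on $\barF(y)$, whereas the paper applies the closed case to $\bar X$ and $\bar X\setminus X$ and takes the difference; the content is identical.
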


\begin{proof}
\eqref{item;commute}~Let $\xi\in\g$.  By definition the transverse
vector field $\pi^\dag(\xi_M)$ commutes with the sections of the
centralizer subsheaf $\ca{C}_P$.  Since the lifting homomorphism
$\pi^\dag$ induces an isomorphism $\ca{C}\cong\pi_*\ca{C}_P$
(Theorem~\ref{theorem;molino-2}\eqref{item;stalk}), it follows that
$\xi_M$ commutes with the sections of $\ca{C}$.

\eqref{item;closure-stabilizer}~The leaf closure $\barF(x)$ is the
orbit of $x$ under the centralizer Lie algebroid
(Theorem~\ref{theorem;molino-2}\eqref{item;algebroid}).
By~\eqref{item;commute} the action of this Lie algebroid commutes with
that of $\g$, so $y$ has the same stabilizer as $x$.

\eqref{item;product-lie-algebroid}~It follows
from~\eqref{item;commute} that the bundle of Lie algebras
$\g\times\liec$ acts transversely on $M$, so we have a well-defined
transverse action Lie algebroid $(\g\times\liec)\ltimes\F$.  The orbit
$(\g\times\liec)\ltimes\F(x)$ can be alternatively described as the
$\g$-orbit of the leaf closure $\barF(x)=\liec(x)$ or as the
$\liec$-orbit of the $\g$-orbit of the leaf $\F(x)$.

\eqref{item;locally-closed}~It follows
from~\eqref{item;product-lie-algebroid} that every closed
$\g\ltimes\F$-invariant subset of $M$ is
$(\g\times\liec)\ltimes\F$-invariant.  Now let $X\subseteq M$ be
locally closed and $\g\ltimes\F$-invariant.  Then $\bar{X}$ and
$\bar{X}\backslash X$ are closed and $\g\ltimes\F$-invariant, hence
$(\g\times\liec)\ltimes\F$-invariant, so $X$ itself is
$(\g\times\liec)\ltimes\F$-invariant.
\end{proof}

\begin{remark}\label{remark;invariant}
Item~\eqref{item;closure-stabilizer} of this proposition is true even
if the transverse $\g$-action is not isometric.  The reason is that a
foliate vector field on $M$ that is tangent to $\F(x)$ for some $x\in
M$ is tangent to $\F(y)$ for every $y$ in the leaf closure of $x$.
(Cf.\ remark on~\cite[p.~325]{belfi-park-richardson;hopf}.)
Item~\eqref{item;locally-closed} shows that every
$\g\ltimes\F$-invariant embedded submanifold of $M$ is
$(\g\times\liec)\ltimes\F$-invariant.
\end{remark}

Another desirable item is the existence of suitable tubular
neighbourhoods.  We call a subset $X$ of $M$ \emph{invariant} or
\emph{saturated} with respect to a Lie algebroid $\liea$ over $M$ if
for every $x\in X$ the $\liea$-orbit of $x$ is contained in $X$.  Let
$X$ be a $\g\ltimes\F$-invariant embedded submanifold of $M$ and let
$\F_X=\F|_X$ be the restriction of the foliation to $X$.  Then the
normal bundle
\[N_MX=NX=TM|_X/TX\cong N\F/N\F_X\]
is a foliated vector bundle over $(X,\F_X)$ and is equipped with a
natural transverse $\g$-action with the property that the bundle
projection $NX\to X$ is equivariant.  A \emph{$\g\ltimes\F$-invariant
  tubular neighbourhood} of $X$ is a $\g$-equivariant foliate
embedding $f\colon NX\to M$ with the following properties: the image
$f(NX)$ is a $\g\ltimes\F$-invariant open subset of $M$; $f|_X=\id_X$;
and $T_xf=\id_{N_xX}$ for all $x\in X$.  (Here we identify $X$ with
the zero section of $NX$ and the normal bundle of $X$ in $NX$ with
$NX$.)
\glossary{NMX@$N_MX$, normal bundle of submanifold $X$}

We cannot guarantee that every $\g\ltimes\F$-invariant $X$ has a
$\g\ltimes\F$-invariant tubular neighbourhood: we must require in
addition that $X$ be closed.  (By Remark~\eqref{remark;invariant} $X$
and its tubular neighbourhood are then automatically invariant under
the bigger Lie algebroid $(\g\times\liec)\ltimes\F$.)
\glossary{X@$(X,\F_X)$, $\F$-invariant submanifold of $M$ with induced
  foliation}
\glossary{FX@$\F_X$, induced foliation of $X$}
\glossary{X@$X_W$, submanifold of $W$ corresponding to $X$}

\begin{proposition}\label{proposition;tubular}
Let $M$ be a connected manifold equipped with a complete Riemannian
foliation $(\F,g)$ and let $a\colon\g\to\X(M,\F,g)$ be an isometric
transverse action.  Every $\g\ltimes\F$-invariant closed embedded
submanifold $X$ of $M$ has a $\g\ltimes\F$-invariant tubular
neighbourhood.  For every pair of $\g\ltimes\F$-invariant tubular
neighbourhoods $f_0$, $f_1\colon NX\to M$ there exists a
$\g$-equivariant foliate isotopy from $f_0$ to~$f_1$.
\end{proposition}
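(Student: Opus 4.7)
My plan is to reduce the problem to an ordinary equivariant tubular neighborhood theorem on the Molino manifold $(W,g_W)$, where the transverse $\g$-action becomes an honest isometric action of a Lie group $G$ commuting with the isometric $K$-action, and then transport the result back to $M$.

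\emph{Step 1 (Correspondence of submanifolds).} By Proposition~\ref{proposition;closure-stabilizer}\eqref{item;locally-closed} and the Remark following it, the closed $\g\ltimes\F$-invariant submanifold $X$ is in fact $(\g\times\liec)\ltimes\F$-invariant, hence a union of leaf closures of~$\F$. Therefore $Y:=\pi^{-1}(X)$ is a closed, $K$-invariant, and $\barF_P$-saturated embedded submanifold of $P$. Because $\varrho\colon P\to W$ is a submersion whose fibres are precisely the leaf closures of~$\F_P$, the set $X_W:=\varrho(Y)$ is a closed embedded submanifold of $W$ with $Y=\varrho^{-1}(X_W)$. By Proposition~\ref{proposition;induced-action}, $X_W$ is invariant under both the compact isometric $K$-action and the isometric $G$-action integrating~$\g$, and these actions commute.

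\emph{Step 2 (Equivariant tubular neighborhood on $W$).} The normal exponential map $\exp^\perp\colon N_WX_W\to W$ of the complete Riemannian submanifold $X_W\subseteq (W,g_W)$ is $K\times G$-equivariant because $K\times G$ acts by isometries. The injectivity radius of $\exp^\perp$ is a continuous, positive, and $K\times G$-invariant function $r$ on $X_W$, so after a $K\times G$-equivariant radial rescaling $\varphi\colon N_WX_W\longiso\{v:\|v\|<r(\pi(v))\}$ we obtain a $K\times G$-equivariant diffeomorphism $f_W\colon N_WX_W\longiso V$ onto a $K\times G$-invariant open neighborhood $V$ of $X_W$, with $f_W|_{X_W}=\id_{X_W}$ and identity normal derivative along $X_W$.

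\emph{Step 3 (Descent to $M$).} The natural isomorphisms
\[
\pi^*N_MX\;\cong\;N_PY\;\cong\;\varrho^*N_WX_W
\]
of $K$-equivariant foliated vector bundles over $Y$ (the first because $\pi$ is a principal $K$-bundle and $Y=\pi^{-1}(X)$, the second because $\varrho|_Y\colon Y\to X_W$ is a submersion whose fibres lie in $\barF_P$) transport $f_W$ to a $K$-equivariant foliate diffeomorphism $\tilde f\colon N_PY\longiso\varrho^{-1}(V)\subseteq P$. By Proposition~\ref{proposition;induced-action}, the $G$-action on $W$ corresponds to the transverse $\g$-action on~$P$, so $\tilde f$ is automatically $\g\ltimes\F_P$-equivariant. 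Quotienting by the free $K$-action on both sides yields the desired $\g$-equivariant foliate diffeomorphism $f\colon N_MX\longiso f(N_MX)\subseteq M$, and the normalizations $f|_X=\id_X$ and $T_xf=\id_{N_xX}$ are inherited from those on $W$.

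\emph{Step 4 (Isotopy uniqueness).} Given two such tubular neighborhoods $f_0,f_1$, invert the construction to obtain $K\times G$-equivariant tubular neighborhoods on $W$. The standard argument (linear interpolation in $N_WX_W$ composed with a $K\times G$-equivariant radial rescaling, possible because the injectivity radius is $K\times G$-invariant) produces an equivariant isotopy between them, which descends through the Molino bibundle to a $\g$-equivariant foliate isotopy from $f_0$ to $f_1$. The main technical obstacle is the careful verification of the two normal bundle identifications in Step~3 and the compatibility of the $K$-descent with the foliate and transverse-equivariant structure, which amounts to unpacking the soldering diagram~\eqref{equation;solder} and the Molino bibundle structure.
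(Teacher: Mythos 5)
Your proposal is correct and follows essentially the same route as the paper: pass to the Molino manifold $W$, where the transverse $\g$-action becomes a genuine isometric $G$-action commuting with the $K$-action, build a $G\times K$-equivariant tubular neighbourhood of $X_W$ there, and transport it back through the bibundle $P$ by pulling back along $\varrho$ and quotienting by $K$. The only cosmetic difference is in Step~2, where you produce the equivariant tube on $W$ directly from the normal exponential map, whereas the paper invokes the proper-action equivariant tubular neighbourhood theorem for the closure of the image of $G\times K$ in the isometry group of $W$ --- two formulations of the same underlying fact.
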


\begin{proof}
Let $X_P=\pi^{-1}(X)$ and $X_W=\varrho(X_P)$.  Then $X_P$ is a
$K$-invariant and $\g\ltimes\F_P$-invariant closed embedded
submanifold of $P$, and $X_W$ is a $G\times K$-invariant closed
embedded submanifold of $W$, where $G$ is as in
Proposition~\ref{proposition;induced-action}.  The $G\times K$-action
on $W$ preserves the metric $g_W$.  Let $H$ be the closure of the
image of $G\times K$ in the isometry group of $W$.  Then $X_W$, being
closed, is $H$-invariant, and $H$ acts properly on $W$, so the
$H$-equivariant version of the standard tubular neighbourhood theorem
holds.  Let $f_W\colon NX_W\to W$ be a $G\times K$-invariant tubular
neighbourhood of $X_W$.  Pulling back through $\varrho$ we obtain an
embedding $f_P\colon\varrho^*NX_W\cong NX_P\to P$, which is a
$\g$-equivariant and $K$-invariant tubular neighbourhood of $X_P$.
Hence the quotient by $K$ is an embedding $f\colon NX\to M$, which is
a $\g\ltimes\F$-invariant foliate tubular neighbourhood of~$X$.  Given
two such tubular neighbourhoods $f_0$, $f_1\colon NX\to M$, we have a
corresponding pair of tubular neighbourhoods $f_{W,0}$, $f_{W,1}\colon
NX_W\to W$ of $X_W$.  There exists a $G\times K$-equivariant isotopy
$F_W\colon[0,1]\times NX_W\to W$ from $f_{W,0}$ to $f_{W,1}$, which
gives rise to a $\g$-equivariant foliate isotopy $F\colon[0,1]\times
NX\to M$ from $f_0$ to~$f_1$.
\end{proof}

\begin{remark}\label{remark;tubular}
For later use we mention some further properties of the normal bundle
$NX$.  The transverse metric $g$ on $(M,\F)$ restricts to a transverse
metric on $(X,\F_X)$ and to a fibre metric on $NX\cong N\F/N\F_X$.
The transverse Levi-Civita connection descends to a $\g$-invariant
metric connection on $NX$.  Similarly, the normal bundle $NX_W$ is
equipped with a $G\times K$-invariant metric and connection coming
from the Riemannian metric and Levi-Civita connection on $W$.  Given a
pair of embeddings $f\colon NX\to M$ and $f_W\colon NX_W\to W$ as in
the proof of the proposition, the isomorphism
$\pi^*NX\cong\varrho^*NX_W$ is an isomorphism of metric vector bundles
with connection.
\end{remark}

Equally useful is the existence of appropriate partitions of unity.
We call an open cover $\ca{U}$ of $M$ \emph{invariant} with respect to
a Lie algebroid $\liea$ over $M$ if every $U\in\ca{U}$ is
$\liea$-invariant.  We say that a partition of unity
$(\chi_U)_{U\in\ca{U}}$ subordinate to an $\liea$-invariant open cover
$\ca{U}$ is \emph{$\liea$-invariant} if each $\chi_U$ is constant
along every $\liea$-orbit.  The existence of $\F$-invariant partitions
of unity was established in~\cite[Lemma
  2.2]{belfi-park-richardson;hopf}.  Here is a $\g\ltimes\F$-invariant
extension of that result.

\begin{proposition}\label{proposition;unity}
Let $M$ be a connected manifold equipped with a complete Riemannian
foliation $(\F,g)$ and let $a\colon\g\to\X(M,\F,g)$ be an isometric
transverse action.  Let $\ca{U}$ be a $\g\ltimes\F$-invariant open
cover of $M$.  There exists a $\g\ltimes\F$-invariant partition of
unity subordinate to $\ca{U}$.
\end{proposition}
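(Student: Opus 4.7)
The plan is to mirror the strategy of Proposition~\ref{proposition;tubular} by transferring the problem to the Molino manifold $W$. On $W$ the transverse $\g$-action integrates to an isometric action of a Lie group $G$ commuting with $K$, and the classical theory of invariant partitions of unity for proper Lie group actions applies.

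First I would push the cover down to $W$. By Proposition~\ref{proposition;closure-stabilizer}\eqref{item;locally-closed}, each $U\in\ca U$ is in fact invariant under the bigger Lie algebroid $(\g\times\liec)\ltimes\F$, hence $\barF$-saturated. Therefore $U_P=\pi^{-1}(U)$ is a $K$-invariant, $\barF_P$-saturated open subset of $P$ that is $\g$-invariant under the lift $a_P$ of Proposition~\ref{proposition;induced-action}. Since $\barF_P$ is strictly simple with leaf space $W$, we can write $U_P=\varrho^{-1}(U_W)$ for a unique open set $U_W\subseteq W$, and this $U_W$ inherits $G\times K$-invariance.

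The one nontrivial point is to upgrade $G\times K$-invariance to $H$-invariance, where $H$ denotes the closure of the image of $G\times K$ in the isometry group of $(W,g_W)$. Although open sets invariant under a dense subgroup need not be invariant under its closure in general, here the complement $C=W\setminus U_W$ is closed and $G\times K$-invariant, so for $w\in C$ and $h\in H$ a limit of elements $(g_n,k_n)\in G\times K$, continuity of the $H$-action and closedness of $C$ force $h\cdot w=\lim_n(g_n,k_n)\cdot w\in C$. Hence $C$, and therefore $U_W$, is $H$-invariant.

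Since $H$ acts properly on $W$ (as in the proof of Proposition~\ref{proposition;tubular}), and $\{U_W\}_{U\in\ca U}$ is an $H$-invariant open cover, the classical existence theorem for smooth invariant partitions of unity under proper Lie group actions yields an $H$-invariant partition of unity $(\chi_{U_W})$ subordinate to this cover. The pullbacks $\varrho^*\chi_{U_W}$ on $P$ are $K$-invariant, $\F_P$-basic, and $\g$-invariant under $a_P$; the $K$-invariance lets them descend through the principal $K$-bundle $\pi$ to smooth functions $\chi_U$ on $M$, with $\F$-basicness and $\g$-invariance preserved along the way. Each $\chi_U$ is therefore $\g\ltimes\F$-invariant with $\supp(\chi_U)\subseteq U$ and $\sum_U\chi_U=1$, giving the desired partition of unity.
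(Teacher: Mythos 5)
Your proof is correct and follows essentially the same route as the paper's: establish that each $U$ is $\barF$-saturated, descend the cover to a $G\times K$-invariant cover of the Molino manifold $W$, invoke the classical invariant partition of unity for the proper action of the closure of $G\times K$ in the isometry group, and transport back through $\varrho$ and $\pi$. Your explicit verification that the closedness of $W\setminus U_W$ upgrades $G\times K$-invariance to invariance under the closure $H$ is a detail the paper leaves implicit, and is a welcome addition.
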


\begin{proof}
Let $U\in\ca{U}$.  The complement $M\backslash U$ is closed and
$\F$-invariant, and therefore it is $\barF$-invariant.  It follows
that $U$ itself is $\barF$-invariant, because the leaf closures
$\barF(x)$ decompose $M$ into disjoint subsets.  Therefore the open
subset $U_P=\pi^{-1}(U)$ of $P$ is $K$-invariant and
$\g\ltimes\barF_P$-invariant.  Hence $U_P=\varrho^{-1}(U_W)$ for a
unique open subset $U_W$ of $W$, namely $U_W=\varrho(U_P)$.  The set
$U_W$ is $G\times K$-invariant, where $G$ is as in
Proposition~\ref{proposition;induced-action}.  The correspondence
$U\leftrightarrow U_W$ gives us a cover $\ca{U}_W$ of $W$ by $G\times
K$-invariant open subsets.  Since $G$ acts isometrically, the closure
$\bar{G}$ of $G$ acts properly, so there exists a $G\times
K$-invariant partition of unity subordinate to $\ca{U}_W$, which
transports back to a $\g\ltimes\F$-invariant partition of unity
subordinate to $\ca{U}$.
\end{proof}

We obtain from this a Mayer-Vietoris theorem for equivariant basic de
Rham theory, the non-equivariant version of which is due
to~\cite[Theorem 2.3]{belfi-park-richardson;hopf}.  For a
$\g\ltimes\F$-invariant open cover $\ca{U}=\{U_i\}_{i\in I}$ of $M$ we
have the \emph{basic \v{C}ech-de Rham complex}
$\bigl(\check{C}(\ca{U}),\delta\bigr)$ associated with $\ca{U}$, which
is defined by
\[
\check{C}^p(\ca{U})=\prod_{i\in
  I^{p+1}}\Omega(U_i,\F)\quad\text{and}\quad (\delta\alpha)_i=
\sum_{q=0}^{p+1}
(-1)^q\alpha_{(i_0,i_1,\cdots,\hat{\imath}_q,\dots,i_{p+1})}|_{U_i}.
\]
Here $U_i$ denotes the intersection $U_{i_0}\cap U_{i_1}\cap\cdots\cap
U_{i_p}$ for a multi-index $i=(i_0,i_1,\dots,i_p)\in I^{p+1}$.  We
obtain an augmentation $\Omega(M,\F)\to\check{C}^0(\ca{U})$ by sending
the form $\alpha$ to the tuple $(\alpha|_{U_i})_{i\in I}$.  The basic
\v{C}ech-de Rham complex has a second grading coming from the
differential form degree, and a second differential, namely the
exterior derivative, which makes it a double complex
$\bigl(\check{C}(\ca{U}),\delta,d\bigr)$.  We denote the associated
total complex by $\check{C}_\tot(\ca{U})$.  Since $U_i$ is
$\g\ltimes\F$-invariant for each $i\in I^p$, the complex
$\Omega(U_i,\F)$ is a $\g$-differential graded algebra (as defined in
Section~\ref{section;basic-equivariant}), and therefore
$\check{C}^p(\ca{U})$ is a $\g$-differential graded algebra.  It
follows that the associated Weil complex
\[
\check{C}_\g^p(\ca{U})=(\W\g\otimes\check{C}^p(\ca{U}))_\bbas\g
\]
is a double complex $\bigl(\check{C}_\g(\ca{U}),\delta,d_\g\bigr)$,
which we call the \emph{$\g$-equivariant $\F$-basic \v{C}ech-de Rham
  complex}.  The following Mayer-Vietoris principle states that the
basic and the equivariant basic \v{C}ech-de Rham complexes are
homotopically trivial with respect to the \v{C}ech differential
$\delta$.
\glossary{CgU@$\check{C}_\g(\ca{U})$, $\g$-equivariant $\F$-basic
  \v{C}ech-de Rham complex}

\begin{proposition}[equivariant basic Mayer-Vietoris principle]
\label{proposition;mayer-vietoris}
Let $M$ be a connected manifold equipped with a complete Riemannian
foliation $(\F,g)$ and let $a\colon\g\to\X(M,\F,g)$ be an isometric
transverse action.  Let $\ca{U}=\{U_i\}_{i\in I}$ be a
$\g\ltimes\F$-invariant open cover of $M$.  The augmented basic
\v{C}ech-de Rham complex
\begin{equation}\label{equation;mayer-vietoris-basic}
\begin{tikzcd}
0\ar[r]&\Omega(M,\F)\ar[r]&\check{C}^0(\ca{U})\ar[r,"\delta"]&
\check{C}^1(\ca{U})\ar[r,"\delta"]&\cdots
\end{tikzcd}
\end{equation}
is a complex of $\g$-differential graded modules and is homotopically
equivalent to zero.  The augmented equivariant basic \v{C}ech-de Rham
complex
\begin{equation}\label{equation;mayer-vietoris-basic-equivariant}
\begin{tikzcd}
0\ar[r]&\Omega_\g(M,\F)\ar[r]&\check{C}_\g^0(\ca{U})\ar[r,"\delta"]&
\check{C}_\g^1(\ca{U})\ar[r,"\delta"]&\cdots
\end{tikzcd}
\end{equation}
is likewise homotopically equivalent to zero.  Therefore the
augmentation induces homotopy equivalences
\[
\begin{tikzcd}
\Omega(M,\F)\ar[r,"\simeq"]&\check{C}_\tot(\ca{U}),
\end{tikzcd}
\qquad
\begin{tikzcd}
\Omega_\g(M,\F)\ar[r,"\simeq"]&\check{C}_{\g,\tot}(\ca{U}),
\end{tikzcd}
\]
and isomorphisms
\[
\begin{tikzcd}
H(M,\F)\ar[r,"\cong"]&H\bigl(\check{C}_\tot(\ca{U})\bigr),
\end{tikzcd}
\qquad
\begin{tikzcd}
H_\g(M,\F)\ar[r,"\cong"]&H\bigl(\check{C}_{\g,\tot}(\ca{U})\bigr).
\end{tikzcd}
\]
\end{proposition}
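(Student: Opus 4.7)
The plan is to build a contracting homotopy for the \v{C}ech differential $\delta$ from a $\g\ltimes\F$-invariant partition of unity, to observe that this homotopy automatically commutes with the structure maps $\iota(\xi)$ and $L(\xi)$ and hence descends to the equivariant complex after tensoring with $\W\g$, and finally to promote the row-wise statement to the total complexes by a standard double-complex argument.

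Concretely, by Proposition~\ref{proposition;unity} I first fix a $\g\ltimes\F$-invariant partition of unity $\{\chi_j\}_{j\in I}$ subordinate to $\ca{U}$, and define the familiar \v{C}ech homotopy operators
\[
h\colon\check{C}^{p+1}(\ca{U})\longto\check{C}^p(\ca{U}),\qquad (h\alpha)_{i_0,\dots,i_p}=\sum_{j\in I}\chi_j\,\alpha_{j,i_0,\dots,i_p},
\]
each summand extended by zero outside $U_j\cap U_{i_0,\dots,i_p}$, together with the gluing map $K\colon\check{C}^0(\ca{U})\to\Omega(M,\F)$ defined by $K(\beta)=\sum_j\chi_j\beta_j$. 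A standard Bott--Tu computation gives $h\delta+\delta h=\id$ on $\check{C}^p(\ca{U})$ for $p\geq 1$, $h\delta+\epsilon K=\id$ on $\check{C}^0(\ca{U})$, and $K\circ\epsilon=\id_{\Omega(M,\F)}$, where $\epsilon$ is the augmentation. This proves the contractibility of the augmented basic complex~\eqref{equation;mayer-vietoris-basic}.

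The key step is to observe that because $\chi_j$ has form-degree zero and is $\g$-invariant (so $L(\xi)\chi_j=0$), each of $h$ and $K$ commutes with $\iota(\xi)$ trivially and with $L(\xi)$ by the Leibniz rule. Consequently the operators $\id_{\W\g}\otimes h$ and $\id_{\W\g}\otimes K$ preserve $\g$-basic elements inside $\W\g\otimes\check{C}^{\bullet}(\ca{U})$, and therefore restrict to a contracting homotopy on the augmented equivariant complex~\eqref{equation;mayer-vietoris-basic-equivariant}. This is exactly the place where the isometric hypothesis on the transverse action is invoked, through Proposition~\ref{proposition;unity}.

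For the total-complex statements the operator $h$ fails to commute with the de Rham differential $d$, so one further step is needed. A standard double-complex perturbation of $(h,K)$ by $d$ --- obtained by iteratively zigzagging a total cocycle down into \v{C}ech degree zero using $d$ and $h$ in alternation --- produces an explicit homotopy inverse to the augmentation $\Omega(M,\F)\to\check{C}_\tot(\ca{U})$. The identical construction on the equivariant side, with $d$ replaced by the total equivariant differential on $\W\g\otimes\check{C}^{\bullet}(\ca{U})$, yields the equivariant homotopy equivalence, and the asserted isomorphisms in cohomology then follow at once. The only substantive obstacle is the existence of the invariant partition of unity, which is already settled by Proposition~\ref{proposition;unity}; the remainder is bookkeeping.
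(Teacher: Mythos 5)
Your proposal is correct and follows essentially the same route as the paper: an invariant partition of unity from Proposition~\ref{proposition;unity} yields the standard Bott--Tu contracting homotopy for $\delta$, which commutes with $\iota(\xi)$ and $L(\xi)$ because the $\chi_j$ are invariant functions of degree zero, hence extends to the Weil complex, and the total-complex statements follow by the standard double-complex argument (the paper simply cites Bott--Tu, Proposition~8.8, for this last formal step). The only detail worth adding is a remark that the sum defining the homotopy is locally finite, so that the operator is well defined.
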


\begin{proof}
The \v{C}ech differential $\delta$ is induced by inclusions of
$\g\ltimes\F$-invariant open subsets, and is therefore a morphism of
$\g$-differential graded modules.  The usual construction of a
null-homotopy $\kappa$ as in~\cite[Proposition
  8.5]{bott-tu;differential-forms} works: let $(\chi_i)_{i\in I}$ be a
$\g\ltimes\F$-invariant partition of unity subordinate to $\ca{U}$,
the existence of which is guaranteed by
Proposition~\ref{proposition;unity}, and let
$\alpha\in\check{C}^p(\ca{U})$.  For every $j\in I$ and $i\in I^{p-1}$
the form $\chi_j\alpha_{j,i_0,\dots,i_{p-1}}\in\Omega(U_j\cap U_i,\F)$
is supported on $\supp(\chi_j)\cap U_i$, so it extends by zero to a
unique form $\beta_{j,i}\in\Omega(U_i,\F)$.  Define
$\kappa(\alpha)\in\check{C}^{p-1}(\ca{U})$ by
$(\kappa(\alpha))_i=\sum_{j\in I}\beta_{j,i}$.  This sum is locally
finite as the family $(\supp(\chi_j)_{j\in I}$ is locally finite; so
$\kappa$ is well-defined and we have $[\delta,\kappa]=\id$.  Since the
$\chi_i$ are $\g\ltimes\F$-invariant, we have
$[\iota(\xi),\kappa]=[L(\xi),\kappa]=0$ for all $\xi\in\g$.  This
proves that~\eqref{equation;mayer-vietoris-basic} is null-homotopic as
a complex of $\g$-differential graded modules.  Being a homotopy of
$\g$-dgm, the map $\kappa$ extends to a map of Weil complexes
\[
\kappa_\g\colon\check{C}_\g^p(\ca{U})\longto\check{C}_\g^{p-1}(\ca{U}),
\]
which satisfies $[\delta,\kappa_\g]=\id$, thus showing
that~\eqref{equation;mayer-vietoris-basic-equivariant} is
null-homotopic as well.  The last assertion is a formal consequence of
the $\delta$-exactness; see~\cite[Proposition
  8.8]{bott-tu;differential-forms}.
\end{proof}

The next corollary is a variation on a result of
Molino~\cite[Proposition 3.7]{molino;riemannian-foliations} and is
mentioned without proof
in~\cite[\S\,3.5]{goertsches-toeben;equivariant-basic-riemannian}.

\begin{proposition}\label{proposition;leaf-orbifold}
Let $M$ be a connected manifold equipped with a complete Riemannian
foliation $(\F,g)$ and let $a\colon\g\to\X(M,\F,g)$ be an isometric
transverse action.  If the leaves of $\F$ are closed, the $K$-action
on $W$ is of constant infinitesimal orbit type, and therefore $M/\F$
is an orbifold.
\end{proposition}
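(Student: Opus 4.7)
The plan is to read off the conclusion from Molino's codimension formula in Theorem~\ref{theorem;molino-2}\eqref{item;algebroid}, and then quote the standard fact that a proper Lie group action with orbits of constant dimension produces an orbifold quotient.

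First I would observe that if the leaves of $\F$ are closed, then $\barF=\F$, so by Theorem~\ref{theorem;molino-1}\eqref{item;leaf-closure} we have a homeomorphism $M/\F=M/\barF\cong W/K$. Moreover every leaf of $\barF=\F$ has codimension exactly $q$ in $M$, so the formula
\[
\codim\bigl(\barF(x)\bigr)=q+\dim\bigl(\Stab(K,w)\bigr)-\dim(\ca{C}_{P,p})
\]
from Theorem~\ref{theorem;molino-2}\eqref{item;algebroid} (with $x=\pi(p)$, $w=\varrho(p)$) forces
\[
\dim\bigl(\Stab(K,w)\bigr)=\dim(\ca{C}_{P,p})
\]
for every $p\in P$.

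Next I would argue that the right-hand side is a constant function on $P$. This uses Theorem~\ref{theorem;molino-2}\eqref{item;stalk}, which asserts that $\ca{C}_P$ is a locally constant sheaf of Lie algebras; together with the fact that $\barF_P$ has constant codimension throughout $P$ (again Theorem~\ref{theorem;molino-2}\eqref{item;algebroid}), this upgrades ``locally constant rank'' to ``globally constant rank'' even if $P$ should fail to be connected. Consequently $w\mapsto\dim(\Stab(K,w))$ is constant on $W$, which is the definition of constant infinitesimal orbit type.

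Finally, since $K=\group{O}(q)$ is compact, the isometric $K$-action on the Riemannian manifold $(W,g_W)$ is automatically proper. For a proper action with stabilizer Lie algebras of constant dimension, the slice theorem shows that each point of $W$ admits a neighbourhood whose image in $W/K$ is modelled on the quotient of a linear slice by a compact stabilizer whose identity component acts trivially; the quotient is therefore locally modelled on a finite group quotient, i.e.\ an orbifold chart. Combined with $M/\F\cong W/K$, this yields the orbifold structure on $M/\F$. The only real obstacle is bookkeeping the constancy of $\dim(\ca{C}_{P,p})$ across connected components of $P$; everything else is a direct appeal to Molino's structure theorems and to the standard slice theory for proper Lie group actions.
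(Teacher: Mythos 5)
Your argument is correct and in substance the same as the paper's: both reduce the claim to showing that $\dim\Stab(K,w)$ is constant via a dimension count in the Molino diagram, using that closed leaves force $\dim\bigl(\barF(x)\bigr)=\dim(\F(x))$, and then conclude via the slice theorem for the compact $K$-action on $W$. The only difference is that you read the count off the codimension formula of Theorem~\ref{theorem;molino-2}\eqref{item;algebroid} (together with the constancy of $\dim(\ca{C}_{P,p})$, which you correctly justify), whereas the paper re-derives the same relation by computing $T_pF_x=T_p(K{\cdot}p)+T_p\barF_P(p)$ for $F_x=\pi^{-1}(\barF(x))$ and identifying $\ker(T_p\pi_x)$ with $\stab(\liek,w)$.
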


\begin{proof}
Let $w\in W$.  Choose $p\in\varrho^{-1}(w)$ and let $x=\pi(p)$.  Let
$\Stab(K,w)$ be the stabilizer of $w$ under the $K$-action and let
$\stab(\liek,w)$ be its Lie algebra.  Then
\[
\stab(\liek,w)=\bigl\{\eta\in\liek\bigm|\eta_{P,p}\in
T_p\barF_P(p)\bigr\},
\]
and therefore the map $\eta\mapsto\eta_{P,p}$ is an isomorphism of
vector spaces
\[
\stab(\liek,w)\cong T_p(K{\cdot}p)\cap T_p\barF_P(p).
\]
Let $F_x=\pi^{-1}(\barF(x))$ and let $\pi_x\colon F_x\to\barF(x)$ be
the restriction of $\pi$ to $F_x$.  Then $F_x=K{\cdot}\barF_P(p)$ is
the $K$-orbit of the leaf closure $\barF_P(p)$, so
\[
T_pF_x=T_p(K{\cdot}p)+T_p\barF_P(p),\qquad
\ker(T_p\pi_x)=T_p(K{\cdot}p)\cap T_p\barF_P(p)\cong\stab(\liek,w).
\]
The hypothesis that the leaves of $\F$ are closed tells us that
$\dim(\barF(x))=\dim(\F(x))$ is constant, which yields that
$\dim(F_x)$ and $\dim(\stab(\liek,w))$ are constant.  This implies
that the Lie subalgebras $\stab(\liek,w)$ are all conjugate to one
another, i.e.\ all $w\in W$ are of the same infinitesimal orbit type.
Therefore, by Theorem~\ref{theorem;molino-1}\eqref{item;leaf-closure},
$M/\F=M/\barF=W/K$ is an orbifold.  (More precisely, in the language
of Theorem~\ref{theorem;molino-3}\eqref{item;molino-morita} the
generalized morphism $\Hol(M,\F)\to K\ltimes W$ defined by the
bibundle $P$ is a weak equivalence of groupoids, and therefore induces
an equivalence of \'etale stacks $[M/\F]\simeq[W/K]$.)
\end{proof}
\glossary{stab@$\stab(\liea,x)$, stabilizer Lie algebra of $x$ with
  respect to Lie algebra or Lie algebroid $\liea$}

The final result of this section, which extends~\cite[Proposition
  4.5]{goertsches-toeben;equivariant-basic-riemannian}, describes the
stabilizer Lie algebras of the Lie algebroid
$(\g\times\liec)\ltimes\F$ over $M$ in terms of the infinitesimal
stabilizers of the $G\times K$-action on the Molino manifold $W$.
Recall that $\theta_\LC$ denotes the transverse Levi-Civita connection
of $P$.  Recall also that for $x\in M$ the fibre $\liec_x$ of the
centralizer bundle $\liec$ consists of all germs at $x$ of transverse
Killing vector fields $\eta$ with the property that the natural lift
$\pi^\dag(\eta)$ commutes with all global transverse vector fields of
$P$.  Let us denote the value of $\eta$ at $x$ by $\eta_{M,x}\in
N_x\F$ and the value of $\pi^\dag(\eta)$ at $p\in P$ by $\eta_{P,p}\in
N_p\F_P$.  Since $\theta_\LC$ is $\F_P$-basic, for all $\xi\in\g$ and
$\eta\in\liec_x$ the expression
$\iota(\xi_{P,p}+\eta_{P,p})\theta_\LC$ is a well-defined element of
$T_p(K\cdot p)\cong\liek$.

\begin{proposition}[stabilizer correspondence]
\label{proposition;stabilizer-correspondence}
Let $M$ be a connected manifold equipped with a complete Riemannian
foliation $(\F,g)$ and let $a\colon\g\to\X(M,\F,g)$ be an isometric
transverse action.  Let $p\in P$, $x=\pi(p)\in M$ and $w=\varrho(p)\in
W$.  Define $\phi\colon\g\times\liec_x\to\g\times\liek$ by
\[
\phi(\xi,\eta)=\bigl(\xi,-\iota(\xi_{P,p}+\eta_{P,p})\theta_\LC\bigr).
\]
Let
\[
\liel_x=\stab((\g\times\liec)\ltimes\F,x)\subseteq\g\times\liec_x
\]
be the stabilizer of $x$ with respect to the
$(\g\times\liec)\ltimes\F$-action on $M$, and let
\[\liel_w=\stab(\g\times\liek,w)\subseteq\g\times\liek\]
be the stabilizer of $w$ with respect to the $\g\times\liek$-action on
$W$.  The restriction of $\phi$ to $\liel_x$ is a Lie algebra
isomorphism from $\liel_x$ onto~$\liel_w$.
\end{proposition}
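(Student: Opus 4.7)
The plan is to factor $\phi|_{\liel_x}$ through an auxiliary Lie algebra $\tilde\g = \g \times \liec_x \times \liek$ equipped with the product bracket. First I would show that $\tilde\g$ acts transversely on a neighbourhood of $p$ in $P$ by the germs of vector fields $(\xi, \eta, \zeta) \mapsto \xi_P + \eta_P + \zeta_P$, where $\xi_P = \pi^\dag(\xi_M)$ is the natural lift from Proposition~\ref{proposition;induced-action}, $\eta_P$ is a local section of $\ca{C}_P$ representing $\eta$ under the identification $\liec_x \cong \liec_{P,p}$ from Theorem~\ref{theorem;molino-2}\eqref{item;local-system}, and $\zeta_P$ is the $\liek$-fundamental vector field. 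The three summands commute pairwise: $[\xi_P, \zeta_P] = 0$ because $\xi_P$ is $K$-invariant by Theorem~\ref{theorem;molino-1}\eqref{item;transverse-metric}, while $[\zeta_P, \eta_P] = [\xi_P, \eta_P] = 0$ by the defining property of $\ca{C}_P$ as the centralizer of the global transverse vector fields on $P$. Hence $\tilde\g$ maps into germs of transverse vector fields at $p$ as a Lie algebra homomorphism.

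Next I would set $\liel_0 = \ker \Phi$, where $\Phi\colon \tilde\g \to N_p\F_P$ is the evaluation $(\xi, \eta, \zeta) \mapsto \xi_{P,p} + \eta_{P,p} + \zeta_{P,p}$. A short computation in coordinates adapted to $\F_P$ shows that the Lie bracket of two foliate vector fields whose values at $p$ lie in $T_p\F_P$ also lies in $T_p\F_P$ at $p$; combined with the Lie algebra homomorphism above, this implies that $\liel_0$ is a Lie subalgebra of $\tilde\g$. The key observation is that the two canonical projections
\[
\liel_x \xleftarrow{\pr_{12}} \liel_0 \xrightarrow{\pr_{13}} \liel_w
\]
are Lie algebra isomorphisms, and that $\phi|_{\liel_x}$ equals the composition $\pr_{13} \circ \pr_{12}^{-1}$.

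For $\pr_{12}$: a pair $(\xi, \eta) \in \g \times \liec_x$ lifts to $\liel_0$ iff $\xi_{P,p} + \eta_{P,p}$ lies in the vertical summand $\liek_p$ of the solder splitting~\eqref{equation;solder-split}, iff $N_p\pi(\xi_{P,p}+\eta_{P,p}) = \xi_{M,x}+\eta_{M,x} = 0$, i.e., iff $(\xi, \eta) \in \liel_x$; the corresponding $\zeta$ is then unique and equal to $-\iota(\xi_{P,p}+\eta_{P,p})\theta_\LC$, because $\theta_\LC$ realises the projection of $N_p\F_P$ onto the $\liek_p$ summand. Injectivity of $\pr_{12}$ is immediate from~\eqref{equation;solder-split}. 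For $\pr_{13}$: a pair $(\xi, \zeta)$ lifts iff $\xi_{P,p}+\zeta_{P,p}$ lies in the image of the evaluation $\liec_{P,p} \to N_p\F_P$, which by Theorem~\ref{theorem;molino-2}\eqref{item;algebroid} equals $T_p\barF_P(p)/T_p\F_P$; the condition therefore becomes $T_p\varrho(\xi_{P,p}+\zeta_{P,p}) = \xi_{W,w}+\zeta_{W,w} = 0$, i.e., $(\xi, \zeta) \in \liel_w$. The main technical point is the injectivity of the evaluation $\liec_{P,p} \to N_p\F_P$, which is not stated in Theorem~\ref{theorem;molino-2} but follows from comparing the dimension formula in~\eqref{item;stalk} with the anchor image computation of~\eqref{item;algebroid}: both equal $\dim\barF_P(p) - \dim\F_P(p)$, forcing the evaluation to be injective.
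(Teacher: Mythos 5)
Your proof is correct, but it is organized differently from the paper's. The paper characterizes $\liel_x$ by the condition $(\xi_{P,p}+\eta_{P,p})_\hor=0$ with respect to the transverse Levi-Civita connection (its equation~\eqref{equation;stabilizer}), then checks the inclusions $\phi(\liel_x)\subseteq\liel_w$ and $\phi(\liel_x)\supseteq\liel_w$ and injectivity by hand, and finally proves the homomorphism property by factoring $\phi_0$ through three maps $\phi_2\circ\pi^\dag\circ\phi_1$, the last of which is an \emph{anti}-homomorphism (whence the minus sign in $\phi$). You instead pass to the product Lie algebra $\g\times\liec_x\times\liek$ acting transversely near $p$, form the joint stabilizer $\liel_0=\ker\Phi$, and exhibit $\phi|_{\liel_x}$ as $\pr_{13}\circ\pr_{12}^{-1}$; this makes the homomorphism property automatic (projections of a product restricted to a subalgebra), treats $M$ and $W$ symmetrically, and folds surjectivity and injectivity into the single observation that both projections are bijective on $\liel_0$. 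The price is that you must verify the pairwise commutation of the three summands and the subalgebra property of $\liel_0$, and your argument silently relies on the convention that the fundamental vector fields of the \emph{right} $K$-action satisfy $\zeta\mapsto\zeta_P$ as a Lie algebra homomorphism with $\theta_\LC(\zeta_P)=\zeta$ --- this is exactly where the paper's anti-homomorphism bookkeeping reappears in your version, so it deserves an explicit sentence. Two small corrections: the injectivity of the evaluation $\liec_{P,p}\to N_p\F_P$ \emph{is} stated in Theorem~\ref{theorem;molino-2}\eqref{item;algebroid}, namely as the assertion that $\liec_P\ltimes\F_P$ acts freely on $P$ (this is what the paper's proof invokes), although your dimension count gives an independent derivation; and the image of $T_p(K\cdot p)$ in $N_p\F_P$ being exactly $\liek_p$ uses $T_p(K\cdot p)\cap T_p\F_P=0$, which follows from the soldering diagram~\eqref{equation;solder} and should be cited rather than assumed.
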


\begin{proof}
Let $v$ be a transverse Killing vector field defined on an open
neighbourhood $U$ of $x$.  Its natural lift $\pi^\dag(v)$ is defined
on $\pi^{-1}(U)\subseteq P$ and decomposes into a vertical part
$\pi^\dag(v)_\ver$ and a horizontal part $\pi^\dag(v)_\hor$ with
respect to the transverse Levi-Civita connection.  Thus $v$ vanishes
at $x$ if and only if $\pi^\dag(v)$ is vertical at $p$ if and only if
$\pi^\dag(v)_\hor=0$.  Let us apply this observation to the transverse
Killing vector field $v=\xi_M+\eta_M$, where
$(\xi,\eta)\in\g\times\liec_x$.  Then $v$ is defined in a
neighbourhood of $x$ and vanishes at $x$ precisely when the pair
$(\xi,\eta)$ is in $\liel_x$.  Thus
\begin{equation}\label{equation;stabilizer}
(\xi,\eta)\in\liel_x\iff(\xi_{P,p}+\eta_{P,p})_\hor=0.
\end{equation}
Let $\phi(\xi,\eta)_W$ denote the vector field on $W$ induced by
$\phi(\xi,\eta)$.  Any $u\in T_pP$ defines an element
$\iota(u)\theta_\LC\in\liek$, which induces a vector field $u'$ on
$P$, whose value at $p$ is equal to $u'_p=u_\ver$.  Taking
$u=\xi_{P,p}+\eta_{P,p}$ we get $u'_p=(\xi_{P,p}+\eta_{P,p})_\ver$, so
that $\phi(\xi,\eta)_{W,w}=
\xi_{W,w}-T_p\varrho\bigl((\xi_{P,p}+\eta_{P,p})_\ver\bigr)$.  So if
$(\xi,\eta)$ is in $\liel_x$, it follows
from~\eqref{equation;stabilizer} that
\[
\phi(\xi,\eta)_{W,w}=\xi_{W,w}-T_p\varrho(\xi_{P,p}+\eta_{P,p})=
\xi_{W,w}-\xi_{W,w}=0,
\]
i.e.\ $\phi(\xi,\eta)\in\liel_w$.  This shows that
$\phi(\liel_x)\subseteq\liel_w$.  Conversely, let
$(\xi,\zeta)\in\liel_w$, i.e.\ $\xi_{W,w}+\zeta_{W,w}=0$.  Then the
transverse vector field $\xi_P+\zeta_P$ on $P$ is tangent to the fibre
$\varrho^{-1}(w)$.  It follows from Theorem~\ref{theorem;molino-2}
that $\xi_{P,p}+\zeta_{P,p}\in T_p\barF_P/T_p\F_P=\liec_{P,p}$, so
$\xi_{P,p}+\zeta_{P,p}=-\eta_{P,p}$ for some
$\eta\in\liec_{P,p}\cong\liec_x$.  Since $\zeta\in\liek$, the vector
field $\zeta_P$ is vertical, so
\[
(\xi_{P,p}+\eta_{P,p})_\ver=-(\zeta_{P,p})_\ver=
-\zeta_{P,p}=\xi_{P,p}+\eta_{P,p}.
\]
Therefore $(\xi,\eta)\in\liel_x$ by~\eqref{equation;stabilizer}, and
also $\phi(\xi,\eta)=(\xi,\iota(\zeta_{P,p})\theta_\LC)=(\xi,\zeta)$.
This shows that $\phi(\liel_x)\supseteq\liel_w$.  Next suppose
$(\xi,\eta)\in\liel_x$ satisfies $\phi(\xi,\eta)=0$.  Then $\xi=0$, so
$\iota(\eta_{P,p})\theta_\LC=0$, so $(\eta_{P,p})_\ver=0$, so
$\eta_{P,p}=0$ by~\eqref{equation;stabilizer}.  It follows that
$\eta=0$, because by
Theorem~\ref{theorem;molino-2}\eqref{item;algebroid} the centralizer
Lie algebroid $\liec\ltimes\F$ acts freely on $P$.  This shows that
the restriction of $\phi$ to $\liel_x$ is injective.  We finish by
showing that the restriction of $\phi$ to $\liel_x$ is a Lie algebra
homomorphism.  The map $\phi_0\colon\liel_x\to\liek$ defined by
$\phi_0(\xi,\eta)=\iota(\xi_{P,p}+\eta_{P,p})\theta_\LC$ is the
composition of three maps
\[
\begin{tikzcd}
\liel_x\ar[r,"\phi_1"]&\X(M,\F,g)_x\ar[r,"\pi^\dag"]&\X(K\cdot
p)^K\ar[r,"\phi_2"]&\liek,
\end{tikzcd}
\]
where $\X(M,\F,g)_x$ denotes the Lie algebra of Killing vector fields
on $M$ that vanish at $x$, and
$\phi_1(\xi,\eta)=\xi_{M,x}+\eta_{M,x}$, and $\phi_2$ is the bijective
map that sends a $K$-invariant vector field $u$ tangent to $K\cdot p$
to $\iota(u)\theta_{\LC,p}$.  Note that $\pi^\dag$ takes values in the
$K$-invariant vector fields (see~\eqref{equation;killing}) and maps
vector fields vanishing at $x$ to vector fields tangent to $K\cdot p$
because of~\eqref{equation;stabilizer}.  The maps $\phi_1$ and
$\pi^\dag$ are homomorphisms and $\phi_2$ is an anti-homomorphism, so
$\phi(\xi,\eta)=(\xi,-\phi_0(\xi,\eta))$ is a homomorphism.
\end{proof}

\section{The orbit type stratification}
\label{section;orbit-type}

\numberwithin{equation}{section}

In this section we show that an isometric transverse Lie algebra
action on a Riemannian foliated manifold can be locally linearized and
gives rise to a tidy stratification of the manifold by orbit type.
These facts were partly known to
Kobayashi~\cite{kobayashi;fixed-isometry}, and have been long
established for smooth proper Lie group actions, as set forth
e.g.\ in~\cite[\S\,IX.9]{bourbaki;groupes-algebres}.  A little caution
is needed: orbits of Lie algebra actions may not have tubular
neighbourhoods (e.g.\ Kronecker's dense line in the torus), so the
``slice theorem'' is valid only in a weak form,
Proposition~\ref{proposition;tube} below.

\subsection*{Notation and conventions}

See \S\S\,\ref{section;foliation}--\ref{section;transverse} and the
notation index in the back for our general conventions regarding
foliations and transverse Lie algebra actions.  We denote the Lie
algebra of Killing vector fields of a Riemannian manifold $(S,g_S)$ by
$\X(S,g_S)$.  Throughout \S\,\ref{section;orbit-type} $\g$ denotes a
finite-dimensional Lie algebra and $M$ denotes a manifold equipped
with a Riemannian foliation $(\F,g)$ and an isometric transverse
action $a\colon\g\to\X(M,\F,g)$.  We will assume that $(M,\F,g)$ is
metrically complete in the sense of \S\,\ref{section;molino}, so that
the Molino structure theory applies, although many of the results of
this section hold without this assumption.  For $\xi\in\g$ we denote
the transverse vector field $a(\xi)$ by $\xi_M$.  By a submanifold of
$M$ we will mean an embedded submanifold.  We will allow our
submanifolds to be ``impure'', meaning that they may have connected
components of different dimensions.

\subsection*{The stratification}

We start with the simple observation that every foliation chart of
$(M,\F)$ is $\g$-equivariant in a suitable sense.  This will allow us
for most purposes to disregard the foliation and restrict our
attention to the transverse directions.  Recall that the transverse
metric $g$ restricts to an ordinary Riemannian metric $g_S$ on any
transversal $S$ to the foliation, and that the transverse action $a$
restricts to an ordinary Lie algebra action
$a_S\colon\g\to\X(S,g_S)$ on $S$ by Killing vector fields.

\begin{lemma}\label{lemma;equivariant-foliation-chart}
Let $x\in M$.  Put $T=T_xM$ and $F=T_x\F$.  Choose a transversal $S$
to the foliation $\F$ at $x$.  Let $g_S$ be the Riemannian metric on
$S$ induced by $g$ and let $a_S\colon\g\to\X(S,g_S)$ be the
isometric $\g$-action on $S$ induced by $a$.  Equip the product
$S\times F$ with the foliation given by the fibres of the projection
$S\times F\to S$ and with the transverse $\g$-action defined by the
action $a_S$ on the first factor.  Let $B$ be an open neighbourhood of
$x$ in $S$ and let $\gamma\colon B\times F\to M$ be a foliate open
embedding satisfying $\gamma(y,0)=y$ for all $y\in B$, and
$T_x\gamma=\id_T$.  Then $\gamma$ is $\g$-equivariant.
\end{lemma}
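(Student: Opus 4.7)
The plan is to identify both $\xi_{B\times F}$ and the ``pullback'' of $\xi_M$ via $\gamma$ as the same vector field on $B$, using the fact that a strictly simple foliation makes transverse vector fields rigid. The foliation of $B\times F$ given by the fibres of $\pr_1\colon B\times F\to B$ is strictly simple, so $\X(B\times F,\F_{B\times F})$ is canonically isomorphic to $\X(B)$ via the restriction of transverse vector fields to the transversal $B\times\{0\}$ (which meets every leaf exactly once). Under this isomorphism $\xi_{B\times F}$ corresponds to $a_S(\xi)|_B$ by the very definition of the transverse $\g$-action on the product.

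Since $\gamma$ is a foliate open embedding, it identifies $\F_{B\times F}$ with $\F|_U$, where $U=\gamma(B\times F)$; in particular $\F|_U$ is strictly simple, with transversal $\gamma(B\times\{0\})=B\subseteq S$. Because $\gamma$ is foliate, the normal derivative $N\gamma\colon N\F_{B\times F}\to N\F$ carries $\X(\F_{B\times F})$-invariant sections to $\X(\F|_U)$-invariant sections, so $N\gamma(\xi_{B\times F})$ is a bona fide transverse vector field on $U$. To prove $\xi_{B\times F}\sim_\gamma\xi_M$ it therefore suffices, by the rigidity just mentioned, to show that $N\gamma(\xi_{B\times F})$ and $\xi_M|_U$ restrict to the same vector field on the transversal $B$.

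The restriction to $B$ is a direct computation. Differentiating the identity $\gamma(y,0)=y$ at each $y\in B$ gives $T_y\gamma|_{T_yB}=\id_{T_yS}$, so the normal derivative $N_{(y,0)}\gamma$ sends $\xi_{B\times F,(y,0)}=a_S(\xi)_y$ to $a_S(\xi)_y\in N_y\F$. On the other hand, the restriction of $\xi_M$ to the transversal $S$ equals $a_S(\xi)$ by the very definition of $a_S$. The two sides therefore agree on $B$, hence on all of $U$, proving $\g$-equivariance.

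There is no real obstacle: the whole content is that strictly simple foliations reduce transverse data to ordinary data on the leaf space, and that the hypothesis $\gamma(y,0)=y$ pins down the normal derivative along the transversal. The extra hypothesis $T_x\gamma=\id_T$ is not even used for the equivariance assertion as such; it only normalizes the behaviour of $\gamma$ in the leaf direction at $x$.
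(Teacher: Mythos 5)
Your proof is correct and follows essentially the same route as the paper, which simply invokes the fact (from \S\,\ref{section;transverse}) that a transverse vector field on a strictly simple foliated chart is uniquely determined by its restriction to the transversal; you have merely written out that rigidity argument and the check that both sides agree on $B$. You are also right that the hypothesis $T_x\gamma=\id_T$ plays no role in the equivariance claim itself.
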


\begin{proof}
This follows from the fact that the transverse $\g$-action on the
foliation chart $\gamma(B\times F)$ is determined by the $\g$-action
on the transversal $B$, as discussed in \S\,\ref{section;transverse}.
\end{proof}

The following result says that the $\g$-action can be linearized at a
fixed leaf, which is the analogue of the Bochner linearization
theorem.  A subspace $W$ of a vector space $V$ defines a foliation of
$V$, whose leaves are the affine subspaces parallel to $W$ and whose
leaf space is the quotient $V/W$.  We call this the \emph{linear
  foliation} of $V$ defined by $W$.

\begin{proposition}\label{proposition;fixed-leaf}
Let $x\in M$ and $\h=\stab(x,\g\ltimes\F)$.  Let $\F_T$ be the linear
foliation of the tangent space $T=T_xM$ defined by the linear subspace
$F=T_x\F$.
\begin{enumerate}
\item\label{item;inner}
The inner product $g_x$ on $T/F$ defines a transverse Riemannian
metric on $(T,\F_T)$.  The Lie algebra $\h$ acts transversely on
$(T,\F_T)$ by linear infinitesimal isometries.
\item\label{item;bochner}
There is an $\h$-equivariant foliate open embedding $\psi\colon T\to
M$ with the properties $\psi(0)=x$ and $T_0\psi=\id_T$.
\end{enumerate}
\end{proposition}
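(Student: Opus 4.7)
The plan is to reduce to a transversal $S$ at $x$ and invoke the classical linearization of Killing vector fields vanishing at a point. Part~\eqref{item;inner} is essentially bookkeeping: the normal bundle $N\F_T$ is canonically the trivial bundle $T\times(T/F)$, so the constant section defined by $g_x$ on $T/F$ is $\X(\F_T)$-invariant and is a transverse Riemannian metric on $(T,\F_T)$. For the $\h$-action, each $\xi\in\h$ gives a section $\xi_M\in\Gamma(N\F)^{\X(\F)}$ vanishing at $x$, so its intrinsic derivative at $x$ is a well-defined linear map $d_x\xi_M\colon T_xM\to N_x\F$; the relation $\nabla^\F\xi_M=0$ combined with $\xi_M(x)=0$ forces this map to annihilate $F=T_x\F$, producing a linear map $A_\xi\colon T/F\to T/F$, which is $g_x$-skew-symmetric because $\xi_M$ is transverse Killing. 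Viewing $A_\xi$ as a linear vector field on $T/F$ and pulling it back to $T$ along the quotient $T\to T/F$ yields a transverse Killing vector field on $(T,\F_T)$; the assignment $\xi\mapsto A_\xi$ is a Lie algebra homomorphism because the linearization at a common zero of a bracket of vector fields equals the commutator of the linearizations, once one chooses foliate representatives $\tilde\xi_M$ with $\tilde\xi_M(x)=0$ (which is possible because $\xi_M(x)=0$ in $N_x\F$ allows subtracting off a section of $T\F$).

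For part~\eqref{item;bochner}, choose a transversal $S$ to $\F$ at $x$ with $T_xS$ a linear complement to $F$ in $T$. By Lemma~\ref{lemma;equivariant-foliation-chart}, the transverse $\g$-action restricts to an isometric $\g$-action $a_S\colon\g\to\X(S,g_S)$ by Killing vector fields, and any foliate open embedding $\gamma\colon B\times D\to M$ with $B\subseteq S$ a neighbourhood of $x$, $D\subseteq F$ a neighbourhood of $0$, $\gamma(y,0)=y$, and $T_x\gamma=\id_T$ is automatically $\g$-equivariant. For $\xi\in\h$ the Killing field $\xi_S$ on $S$ vanishes at $x$, so the standard fact that Killing vector fields vanishing at a point become linear in geodesic normal coordinates (see, e.g., Kobayashi, \emph{Transformation groups in differential geometry}) shows that the Riemannian exponential $\exp_x^S\colon T_xS\to S$ of $g_S$, restricted to a small neighbourhood $U$ of $0$, is $\h$-equivariant for the linear action $\xi\mapsto d\xi_S|_x$ on $T_xS$. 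Under the canonical isomorphism $T_xS\cong T/F$ supplied by the transversal, $d\xi_S|_x$ coincides with $A_\xi$ from part~\eqref{item;inner}, since both arise as the derivative of $\xi_M$ at $x$ pushed to $N_x\F$. Hence
\[
\psi_0(s,f)=\gamma(\exp_x^S(s),f)
\]
is an $\h$-equivariant foliate open embedding $U\times D\to M$ with $\psi_0(0)=x$ and $T_0\psi_0=\id_T$.

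To extend $\psi_0$ to a map defined on all of $T=T_xS\oplus F$ while retaining $T_0\psi=\id_T$, I would precompose with a product diffeomorphism $\phi(s,f)=(\tilde s(s),\tilde f(f))$, where $\tilde s\colon T_xS\to U$ and $\tilde f\colon F\to D$ are radial contractions of the form $v\mapsto v/(1+\eps^{-1}\abs{v})$ with respect to the $g_x$-norm on $T_xS$ and any fixed norm on $F$. The map $\tilde s$ is $\h$-equivariant because it is radial for an $\h$-invariant norm (the operators $A_\xi$ being $g_x$-skew-symmetric); $\phi$ preserves $\F_T$ because of its product form; and $T_0\phi=\id_T$. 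Setting $\psi=\psi_0\circ\phi$ then yields the required embedding. The main technical subtlety is the matching of the two linearizations in the previous paragraph; once one verifies $d\xi_S|_x=A_\xi$ under the identification $T_xS\cong N_x\F$, the rest is routine bookkeeping involving the Bott connection and the equivariant foliation chart.
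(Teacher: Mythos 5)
Your argument is correct, and part~\eqref{item;inner} is essentially the paper's own (the paper sets $\xi_T=\ad\xi_{M,x}$, which is your $A_\xi$, and likewise reads off skew-symmetry from the Killing condition). For part~\eqref{item;bochner} you take a genuinely different route at the linearization step. The paper integrates the $\h$-action on the transversal $S$: completeness of the fields $\xi_S$ near the fixed point $x$ (trajectories stay at fixed distance from $x$) plus Lie--Palais produce a connected immersed subgroup $K$ of the isometry group of $(S,g_S)$, whose closure $\bar{K}$ is compact because it fixes $x$, and the classical Bochner linearization theorem for the compact group $\bar{K}$ then supplies the equivariant chart $\phi\colon B\to E$. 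You instead linearize each Killing field $\xi_S$ vanishing at $x$ directly by the geodesic exponential map $\exp_x^S$ of $g_S$ --- the Kobayashi-style argument --- which avoids Lie--Palais and the passage to $\bar{K}$ altogether and is more elementary; the trade-off is that it is special to isometric actions, whereas the paper's route is the one that survives Bochner's general compact-group setting. Your identification of $d\xi_S|_x$ with $A_\xi$ under $T_xS\cong N_x\F$ is indeed the point that needs checking, and it holds because the varying isomorphism $T_yS\cong N_y\F$ is being differentiated at a zero of the field; note also that $\g$-equivariance of a foliate map only constrains normal derivatives, so the contraction on the $F$-factor needs no equivariance. Both arguments must inflate the small equivariant chart to one defined on all of $T$; the paper does this silently, you do it explicitly, but your specific radial map $v\mapsto v/(1+\eps^{-1}\abs{v})$ is only $C^1$ at the origin (its second radial derivative jumps there), so replace it by, say, $v\mapsto v\bigl(1+\eps^{-2}\abs{v}^2\bigr)^{-1/2}$, which is smooth, still radial for the $\h$-invariant norm (hence still equivariant), and has derivative $\id$ at $0$.
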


\begin{proof}
\eqref{item;inner}~The quotient vector space $T/F$ is the fibre at $x$
of the normal bundle $N\F$ of the foliation, so it carries the inner
product $g_x$.  The normal bundle of the linear foliation $\F_T$ is
the trivial bundle over $T$ with fibre $T/F$, so the inner product
$g_x$ defines a translation-invariant transverse Riemannian metric on
the foliated manifold $(T,\F_T)$.  The Lie algebra of isometric linear
transverse vector fields on $(T,\F_T)$ is equal to $\lie{o}(T/F)$, the
Lie algebra of skew-symmetric linear endomorphisms of the
inner-product space $T/F$.  For $\xi\in\h$ the transverse vector field
$\xi_M$ vanishes at $x$, so $\xi_T=\ad\xi_{M,x}$ is a well-defined
linear endomorphism of the fibre $N_x\F=T/F$, which is skew-symmetric
because $\xi_M$ preserves $g$.  The map $a_x\colon\h\to\lie{o}(T/F)$
that sends $\xi$ to $\xi_T$ defines a linear isometric transverse
$\h$-action $(T,\F_T)$.

\eqref{item;bochner}~Choose a transversal $S$ to the foliation $\F$ at
$x$.  Let $g_S$ be the metric on $S$ induced by the transverse metric
$g$ and $a_S\colon\g\to\X(S,g_S)$ the $\g$-action on $M$ induced
by the transverse $\g$-action $a$.  Since $x\in S$ is $\h$-fixed, for
$\xi\in\h$ each trajectory of the vector field $\xi_S=a_S(\xi)$ stays
at a fixed distance from $x$, which shows that the $\xi_S$ are
complete on a sufficiently small ball $B$ about $x$.  Let $\liek$ be
the image of $\h$ in $\X(S,g_S)$.  By the Lie-Palais theorem the
$\liek$-action exponentiates to an action of $K$, a connected immersed
Lie subgroup of the isometry group of $(S,g_S)$ with Lie algebra
$\liek$.  Then the closure $\bar{K}$ of $K$ in the isometry group
fixes $x$ and is therefore compact, and we have Lie algebra
homomorphisms $\h\to\liek\to\bar{\liek}$, where
$\bar{\liek}=\Lie(\bar{K})$.  Let $E=T_xS$.  Then $E$ is a subspace of
$T$ complementary to $F$, so it is an inner product space isomorphic
to $T/F$, and the linearization of the $\bar{K}$-action at $x$ is a
homomorphism $\bar{\liek}\to\lie{o}(E)$.  By the usual Bochner
linearization theorem (see e.g.~\cite[\S\,IX.9, Proposition
  5]{bourbaki;groupes-algebres}) there exists (after replacing $B$ by
a smaller ball if necessary) a $\bar{K}$-equivariant diffeomorphism
$\phi\colon B\to E$ mapping $x$ to $0$ and with derivative
$T_0\phi=\id_E$.  Since the map $\phi$ is $\bar{K}$-equivariant, it is
also $\h$-equivariant.  After again shrinking $B$ if necessary, we
have a foliation chart $\gamma\colon B\times F\to M$ as in
Lemma~\ref{lemma;equivariant-foliation-chart}, and we define
$\psi\colon T=E\oplus F\to M$ by $\psi(e,f)=\gamma(\phi^{-1}(e),f)$.
The map $\psi$ satisfies our requirements.
\end{proof}

The next result is a $\g$-equivariant normal form for a neighbourhood
of an arbitrary point $x$ of $M$.  Unlike the slice theorem of the
theory of proper transformation groups (see e.g.~\cite[\S\,IX.9,
  Proposition 6]{bourbaki;groupes-algebres}), it does not give a
normal form for a tubular neighbourhood of the entire
$\g\ltimes\F$-orbit of $x$, but only for a portion of the orbit.  Let
$\h=\stab(x,\g\ltimes\F)$ be the stabilizer of $x$.  Choose a
transversal $S$ at $x$ to the foliation $\F$, let
$a_S\colon\g\to\X(S,g_S)$ be the induced $\g$-action on $S$, and
let $\exp(t\xi_S)$ denote the flow of the vector field $\xi_S$ on $S$
induced by $\xi\in\g$.  Let $E$ be the inner-product space $T_xS$.
Let $V=\{\,\xi_{S,x}\mid\xi\in\g\,\}^\perp$ be the orthogonal
complement in $E$ of the tangent space to the $\g$-orbit of $x$.  The
map $\psi$ of Proposition~\ref {proposition;fixed-leaf} maps $V$ to a
submanifold of $S$ which is complementary to the $\g$-orbit of $x$.
The action of $\h$ on $T=T_xM$ preserves the subspaces $E\subseteq T$
and $V\subseteq E$.  Let $G$ be a Lie group with Lie algebra $\g$.
Define an action $\g\times\h\to\X(G\times V)$ by
\begin{equation}\label{equation;model-action}
(\xi,\eta)\longmapsto(\xi_R-\eta_L,\eta_V),
\end{equation}
where $\xi_L$ (resp.\ $\xi_R$) denotes the left-invariant
(resp.\ right-invariant) vector field on $G$ induced by $\xi\in\g$,
and $\eta_V$ denotes the linear vector field on $V$ induced by
$\eta\in\h$.  The action of the second factor $\h$ is free, so the
$\h$-orbits form a foliation $\ca{H}$ of $G\times V$.  The $\g$-action
preserves the foliation $\ca{H}$ and therefore descends to a
transverse action $\g\to\X((G\times V)/\ca{H})$.  The map
\[
\beta\colon B_G\times B_V\longto
S,\qquad(\exp_G(\xi),v)\longmapsto\exp(\xi_S)(\psi(v))
\]
is well-defined for sufficiently small open neighbourhoods $B_G$ of
$1\in G$ and $B_V$ of $0\in V$.  The map $\beta$ is $\g$-equivariant
and constant along the leaves of $\ca{H}$.  Its tangent map
$T_{(1,0)}\beta\colon\g\times V\to E$ is given by
\begin{equation}\label{equation;tangent}
T_{(1,0)}\beta(\xi,v)=\xi_{S,x}+v
\end{equation}
for $\xi\in\g$ and $v\in V$, so $\beta$ is a submersion at $(1,0)$.
Hence, after replacing $B_G$ and $B_V$ by smaller open subsets if
necessary, the image $B=\beta(B_G\times B_V)$ is an open neighbourhood
of $x$ in $S$.  Let us take $B_G$ and $B_V$ so small that we have a
foliation chart $\gamma\colon B\times F\to M$ as in
Lemma~\ref{lemma;equivariant-foliation-chart}, and define
\begin{equation}\label{equation;tube}
\alpha\colon B_G\times B_V\times F\longto
M,\qquad(\exp_G(\xi),v,f)\longmapsto
\gamma\bigl(\beta\bigl(\exp(\xi_S)(\psi(v))\bigr),f\bigr).
\end{equation}

\begin{proposition}\label{proposition;tube}
Let $x\in M$.  Put $\h=\stab(x,\g\ltimes\F)$ and $F=T_x\F$.  Let $G$
be a Lie group with Lie algebra $\g$ and define a $\g\times\h$-action
on $G\times V$ by~\eqref{equation;model-action}.  Let $\ca{H}$ be the
foliation of $G\times V$ defined by the $\h$-action, and let
$X\subseteq B_G\times B_V$ be a transversal at $(1,0)$ to $\ca{H}$.
Let $\alpha$ be the map defined in~\eqref{equation;tube}.  There
exists an open neighbourhood $B_X$ of $(1,0)$ in $X$ with the property
that $\alpha\colon B_X\times F\to M$ is a $\g$-equivariant foliate
open embedding which maps $(1,0,0)$ to $x$.
\end{proposition}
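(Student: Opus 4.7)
The plan is to factor $\alpha = \gamma \circ (\bar\beta \times \id_F)$, where $\gamma$ is the $\g$-equivariant foliation chart from Lemma~\ref{lemma;equivariant-foliation-chart} and $\bar\beta$ is the descent of $\beta$ to the transversal $X$, and then to reduce the proposition to (i) showing that $\bar\beta$ is a $\g$-equivariant map produced by an honest local $G$-action on $S$ near $x$, and (ii) a derivative calculation at $(1,0,0)$ so that the inverse function theorem applies.

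For step (i), Lie's local integration theorem applied to $a_S\colon\g\to\X(S,g_S)$ produces, after shrinking $B_G$ and $B_V$, a local $G$-action on a neighbourhood of $x$ in $S$ under which $\exp(\xi_S) = \exp_G(\xi)\cdot$ for small $\xi$; hence $\beta(g,v) = g\cdot\psi(v)$. The $\h$-equivariance of $\psi$ from Proposition~\ref{proposition;fixed-leaf}\eqref{item;bochner}, restricted to the $\h$-invariant subspace $V\subseteq E$, gives $\psi(e^{t\eta_V}v) = \exp_G(t\eta)\cdot\psi(v)$ for $\eta\in\h$, so
\[
\beta\bigl(g\exp_G(-t\eta),e^{t\eta_V}v\bigr) = g\exp_G(-t\eta)\exp_G(t\eta)\cdot\psi(v) = \beta(g,v).
\]
Thus $\beta$ is constant on the leaves of $\ca{H}$ (generated by $(-\eta_L,\eta_V)$), so it descends to a map $\bar\beta$ on a neighbourhood of $(1,0)$ in $(G\times V)/\ca{H}$; and the identity $\beta(\exp_G(t\xi)g,v) = \exp(t\xi_S)\beta(g,v)$, also coming from the local $G$-action, shows that $\bar\beta$ intertwines the transverse $\g$-action on $(G\times V)/\ca{H}$ (whose foliate lift is $(\xi_R,0)$) with $a_S$.

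For step (ii), the chain rule together with~\eqref{equation;tangent} and $T_x\gamma = \id_T$ gives
\[
T_{(1,0,0)}\alpha(\xi,v,f) = \xi_{S,x} + v + f \in E\oplus F = T_xM.
\]
Evaluating the generators $\eta\mapsto(-\eta_L,\eta_V)$ of $\ca{H}$ at $(1,0)$ yields $T_{(1,0)}\ca{H} = \h\times\{0\}$, so $T_{(1,0)}X$ is a complement to $\h\times\{0\}$ in $\g\times V$. The linear map $\g\times V\to E$, $(\xi,v)\mapsto\xi_{S,x}+v$, is surjective by the decomposition $E = V \oplus \{\zeta_{S,x}:\zeta\in\g\}$ defining $V$, and has kernel exactly $\h\times\{0\}$ (since $V$ is orthogonal to the orbit tangent space); hence its restriction to $T_{(1,0)}X$ is an isomorphism onto $E$, and so $T_{(1,0,0)}\alpha|_{T_{(1,0)}X\oplus F}$ is an isomorphism onto $T_xM$. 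The inverse function theorem then provides an open neighbourhood $B_X$ of $(1,0)$ in $X$ on which $\alpha\colon B_X\times F\to M$ is an open embedding; $\alpha$ is foliate because $\gamma$ sends $\{y\}\times F$ into leaves, and $\g$-equivariant as the composition of the equivariant maps $\bar\beta\times\id_F$ and $\gamma$.

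The main obstacle is step (i): translating the transverse $\g$-action on $G\times V$ (whose foliate representative involves the right-invariant field $\xi_R$) into a bona fide local $G$-action on $S$ via Lie integration, and leveraging $\h$-equivariance of $\psi$ to obtain $\ca{H}$-invariance of $\beta$. Once the descent $\bar\beta$ is in hand, the rest of the argument is routine inverse-function-theorem bookkeeping.
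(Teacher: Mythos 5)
Your proof is correct and follows essentially the same route as the paper's: the tangent computation~\eqref{equation;tangent} shows $\ker T_{(1,0)}\beta=\h\times\{0\}=T_{(1,0)}\ca{H}$, so $\beta|_X$ is \'etale at $(1,0)$ and restricts to an open embedding on some $B_X$, after which Lemma~\ref{lemma;equivariant-foliation-chart} gives the conclusion. The only difference is that you also justify (via local integration of $a_S$ to a local $G$-action near $x$) the claims that $\beta$ is $\g$-equivariant and constant along the leaves of $\ca{H}$, which the paper asserts without proof in the paragraph preceding the proposition.
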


\begin{proof}
The property $\alpha(1,0,0)=x$ is evident from~\eqref{equation;tube}.
Since $X$ is a transversal to the foliation $\ca{H}$, the transverse
$\g$-action on $G\times V$ restricts to a $\g$-action on $X$, and
$\beta\colon X\to S$ is $\g$-equivariant.  It follows
from~\eqref{equation;tangent} that the kernel of
$T_{(1,0)}\beta\colon\g\times V\to E$ is equal to
$\h\times\{0\}=T_{(1,0)}\ca{H}$.  Therefore $\beta\colon X\to S$ is
\'etale at $(1,0)$, so $\beta$ restricts to a $\g$-equivariant open
embedding $\beta\colon B_X\to S$ on a sufficiently small open
neighbourhood $B_X\subseteq X$ of $x$.
Lemma~\ref{lemma;equivariant-foliation-chart} now shows that
$\alpha\colon B_X\times F\to M$ is a $\g$-equivariant open embedding.
\end{proof}

Let $\h$ be a Lie subalgebra of $\g$.  We say that $x\in M$ is of
\emph{symmetry type} $\h$ if the stabilizer $\stab(x,\g\ltimes\F)$ is
equal to $\h$, and we define
\[
M_\h=\{\,x\in M\mid\text{$x$ is of symmetry type $\h$}\,\}.
\]
We denote by $(\h)$ the collection of all Lie subalgebras of $\g$ that
are conjugate to $\h$ under the adjoint group $\Ad(\g)$.  We say that
$x\in M$ is of \emph{orbit type} $(\h)$ if the stabilizer subalgebra
$\stab(x,\g\ltimes\F)$ is an element of $(\h)$, and we define the
\emph{stratum of orbit type $(\h)$} to be
\[
M_{(\h)}=\{\,x\in M\mid\text{$x$ is of orbit type $(\h)$}\,\}.
\]
There are obvious inclusions $M_\h\subseteq M^\h$ and $M_\h\subseteq
M_{(\h)}$.  We will now deduce from the local model theorem,
Proposition~\ref{proposition;tube}, that the sets $M^\h$, $M_\h$, and
$M_{(\h)}$ are submanifolds of $M$.  Since they are $\F$-invariant, it
then follows from Remark~\ref{remark;invariant} that they are
automatically invariant under the centralizer Lie algebroid
$\liec\ltimes\F$, i.e.\ they are unions of leaf closures.

\begin{theorem}\label{theorem;symmetry}
Let $\h$ be a Lie subalgebra of $\g$ with normalizer $\n=\n_\g(\h)$.
\begin{enumerate}
\item\label{item;fixed}
The fixed-leaf set $M^\h$ is an $\n\ltimes\F$-invariant closed
submanifold of $M$.
\item\label{item;symmetry}
The symmetry type set $M_\h$ is an $\n\ltimes\F$-invariant open
submanifold of $M^\h$.
\item\label{item;orbit}
The orbit type stratum $M_{(\h)}$ is a $\g\ltimes\F$-invariant
submanifold of $M$.
\end{enumerate}
\end{theorem}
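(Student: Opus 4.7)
The plan is to reduce each of the three items to a linear algebra computation in a suitable local model around an arbitrary point. Items~\eqref{item;fixed} and~\eqref{item;symmetry} will be handled via the Bochner-type linearization of Proposition~\ref{proposition;fixed-leaf}, while item~\eqref{item;orbit} requires the full local tube model of Proposition~\ref{proposition;tube}. The invariance assertions in all three items reduce to Lemma~\ref{lemma;stabilizer}\eqref{item;conjugate}: the flow of a foliate representative of $\xi_M$ changes the stabilizer by $e^{\ad\xi}$, which preserves $\h$ when $\xi\in\n=\n_\g(\h)$ and preserves the conjugacy class $(\h)$ when $\xi\in\g$.

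For~\eqref{item;fixed}, given $x\in M^\h$ one has $\stab(x,\g\ltimes\F)\supseteq\h$, so Proposition~\ref{proposition;fixed-leaf} yields an $\h$-equivariant foliate open embedding $\psi\colon T\to M$, where $T=T_xM$ carries the linear foliation $\F_T$ defined by $F=T_x\F$. Since the transverse $\h$-action on $(T,\F_T)$ is linear in the quotient $T/F=N_x\F$, a point $v\in T$ is $\h$-fixed precisely when its image in $T/F$ lies in the linear subspace $(T/F)^\h$; hence $\psi^{-1}(M^\h)$ is itself a linear subspace of $T$, and in particular a submanifold. Closedness of $M^\h$ is immediate from the continuity of each transverse vector field $\xi_M$ viewed as a section of $N\F$. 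For~\eqref{item;symmetry}, it suffices to show that $M_\h$ is open in $M^\h$: choose a linear complement $\m$ of $\h$ in $\g$; the map $\m\to N_x\F$, $\xi\mapsto\xi_{M,x}$, is injective at $x\in M_\h$ and, by continuity, remains injective on an open neighbourhood $U$ of $x$; for $y\in U\cap M^\h$ we then have $\stab(y,\g\ltimes\F)\cap\m=0$ while $\stab(y,\g\ltimes\F)\supseteq\h$, forcing $\stab(y,\g\ltimes\F)=\h$.

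For~\eqref{item;orbit}, at $x\in M_{(\h)}$ I may assume $\stab(x,\g\ltimes\F)=\h$ after replacing $\h$ by a conjugate. Fix a linear complement $\m$ of $\h$ in $\g$ and apply Proposition~\ref{proposition;tube} with the transversal $B_X=\exp(B_\m)\times B_V$ to $\ca{H}$, producing a $\g$-equivariant foliate open embedding $\alpha\colon B_X\times F\to M$. The heart of the argument is the computation of the isotropy of the transverse $\g$-action at $\alpha(g,v,f)$. Using the formula $(\xi,\eta)\mapsto(\xi_R-\eta_L,\eta_V)$ for the $\g\times\h$-action on $G\times V$, the requirement that $(\xi_R(g),0)$ lie in $T_{(g,v)}\ca{H}$ forces $\eta\in\h_v$ (the linear isotropy of $v$ in $\h$) and $\xi=-\Ad(g)\eta$, so the stabilizer equals $\Ad(g)\h_v$. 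Since $\Ad(g)\in\Ad(\g)$ preserves $\Ad(\g)$-conjugacy classes and $\h_v\subseteq\h$, we have $\Ad(g)\h_v\in(\h)$ if and only if $\h_v=\h$, if and only if $v\in V^\h$. Hence $\alpha^{-1}(M_{(\h)})=\exp(B_\m)\times(B_V\cap V^\h)\times F$, a submanifold of $B_X\times F$ of locally constant dimension.

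The main technical obstacle is the stabilizer computation in~\eqref{item;orbit}: one must keep careful track of the left- versus right-invariant vector field conventions in the model and ensure that the transversal $B_X$ intersects $B_G\times V^\h$ cleanly, which holds for this particular product choice. A secondary subtlety is that the linear $\h$-representations on $T/F$ and on $V$ may vary between connected components of $M^\h$, $M_\h$, or $M_{(\h)}$, so the resulting submanifolds may be impure, as explicitly admitted by the paper's conventions.
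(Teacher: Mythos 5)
Your proof is correct, and items \eqref{item;fixed} and \eqref{item;orbit} follow essentially the paper's own route: linearization at a fixed leaf via Proposition~\ref{proposition;fixed-leaf} for the fixed-leaf set, and the tube model of Proposition~\ref{proposition;tube} together with the stabilizer formula $\stab((g,v),\g\ltimes\ca{H})=\Ad_g(\stab(v,\h))$ (the paper's \eqref{equation;model-stabilizer}, which you rederive directly at the Lie algebroid level instead of by integrating to the action groupoid $(G\times\tilde H)\ltimes(G\times V)$ as the paper does) for the orbit type stratum; your choice of the explicit product transversal $\exp(B_\m)\times B_V$ neatly replaces the paper's transversality argument for $X\cap(G\times V^\h)$. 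Where you genuinely diverge is item \eqref{item;symmetry}: the paper again passes to the tube model and deduces $(G\times V)_\h=(G\times V)^\h=N_G(\h)\times V^\h$ from \eqref{equation;model-stabilizer}, whereas you give a softer semicontinuity argument --- fix a linear complement $\m$ of $\h$, observe that injectivity of $\xi\mapsto\xi_{M,y}$ on $\m$ is an open condition in $y$, and combine $\stab(y,\g\ltimes\F)\supseteq\h$ with $\stab(y,\g\ltimes\F)\cap\m=0$ to force equality. This is more elementary (no local normal form needed for this item) and works verbatim for any transverse action, at the cost of not exhibiting $M_\h$ explicitly in coordinates; the paper's computation buys the extra information $X_\h=X^\h=N_G(\h)\times V^\h$ in the model, which it reuses in item \eqref{item;orbit} and in Theorem~\ref{theorem;semicontinuous}. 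One cosmetic remark: for the $\n\ltimes\F$-invariance you cite only Lemma~\ref{lemma;stabilizer}\eqref{item;conjugate}; strictly you also need part \eqref{item;same} (or the case $\xi=0$ of \eqref{item;conjugate}) to move within a leaf, which is how the paper phrases it.
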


\begin{proof}
\eqref{item;fixed}~The $\n\ltimes\F$-invariance follows from
Lemma~\ref{lemma;stabilizer} and
Proposition~\ref{proposition;closure-stabilizer}%
\eqref{item;closure-stabilizer}.  Every subalgebra $\h$ acts
isometrically transversely on $M$, so it is enough to prove the rest
of the assertion for $\h=\g$.  That $M^\g$ is closed is evident from
the smoothness of the transverse vector fields $\xi_M$.  Let $x\in
M^\g$.  We compute $M^\g$ in the chart at $x$ given by
Proposition~\ref{proposition;fixed-leaf}.  The tangent space $T=T_xM$
is a direct sum $T=E\oplus F$, where $E$ is an inner-product space
with a linear isometric $\g$-action and the foliation of $T$ is the
linear foliation defined by $F$.  Thus $T^\g=E^\g\oplus F$, where
$E^\g$ is the $\g$-fixed subspace of $E$.  It follows that $M^\g$ is a
submanifold.

\eqref{item;symmetry}~The $\n\ltimes\F$-invariance follows from
Lemma~\ref{lemma;stabilizer} and
Proposition~\ref{proposition;closure-stabilizer}%
\eqref{item;closure-stabilizer}.  Let $x\in M_\h$.  To show that
$M_\h$ is open in $M^\h$ it suffices to show that $M_\h\cap U=M^\h\cap
U$ for some open neighbourhood $U$ of $x$.  We take $U$ to be of the
form $U=\alpha(B_X\times F)$ as in Proposition~\ref{proposition;tube},
which allows us to replace $M$ with $X\times F$.  We have $(X\times
F)_\h=X_\h\times F$ and $(X\times F)^\h=X^\h\times F$, so it is enough
to show that $X_\h=X^\h$.  The manifold $X$ is a transversal to the
foliation $\ca{H}$ of $G\times V$, so the $\g$-stabilizer of a point
$(k,v)$ in the $\g$-manifold $X$ is the same thing as the stabilizer
of $(k,v)$ viewed as a point in $G\times V$ with respect to the
transverse action Lie algebroid $\g\ltimes\ca{H}$ over $G\times V$.
But this Lie algebroid can be integrated as follows.  Let $H$ be the
immersed connected subgroup of $G$ corresponding to the Lie subalgebra
$\h\subseteq\g$, and let $\zeta\colon\tilde{H}\to H$ be a covering
group with the property that the action $\h\to\lie{o}(V)$ lifts to an
action $\tilde{H}\to\SO(V)$.  Then $G\times\tilde{H}$ acts on $G\times
V$ by $(g,\tilde{h})\cdot(k,v)=(gk\zeta(h)^{-1},hv)$.  The
corresponding action groupoid $(G\times\tilde{H})\ltimes(G\times V)$
has source and target maps defined by $s(g,\tilde{h},k,v)=(k,v)$ and
$t(g,\tilde{h},k,v)=(gk\zeta(\tilde{h})^{-1},\tilde{h}v)$, and its Lie
algebroid is isomorphic to $\g\ltimes\ca{H}$.  The stabilizer of
$(k,v)$ relative to this groupoid is
\[
\Stab(k,v)=k\Stab(v,\tilde{H})k^{-1},
\]
where $\Stab(v,\tilde{H})$ denotes the stabilizer of $v\in V$ with
respect to the linear $\tilde{H}$-action, and where $G$ acts on
$\tilde{H}$ by conjugation.  Infinitesimally we have
\begin{equation}\label{equation;model-stabilizer}
\stab((k,v),\g\ltimes\ca{H})=\Ad_k(\stab(v,\h))
\end{equation}
for $k\in G$ and $v\in V$.  In particular $(k,v)$ is of symmetry type
$\h$ if and only if $k$ is in the normalizer $N_G(\h)$ and $v\in
V^\h$.  Thus $(G\times V)_\h=(G\times V)^\h=N_G(\h)\times V^\h$, which
proves that $X_\h=X^\h$.

\eqref{item;orbit}~The $\g\ltimes\F$-invariance follows from
Lemma~\ref{lemma;stabilizer} and
Proposition~\ref{proposition;closure-stabilizer}%
\eqref{item;closure-stabilizer}.  In the local model $X\times F$ we
have $(X\times F)_{(\h)}=X_{(\h)}\times F$.
From~\eqref{equation;model-stabilizer} we get $X_{(\h)}=X\cap(G\times
V)_{(\h)}=X\cap(G\times V^\h)$.  By assumption $X$ is transverse to
the leaf $H\times\{0\}$ of $\ca{H}$.  It follows that $X$ is
transverse to $G\times V^\h$, so $X_{(\h)}$ is a submanifold of $X$.
This shows that $M_{(\h)}$ is a submanifold of $M$.
\end{proof}

On each orbit type stratum $M_{(\h)}$ the anchor map
$t\colon\g\ltimes\F\to TM$ has constant rank equal to
$\dim(\g)-\dim(\h)+\dim(\F)$ and therefore defines a regular
foliation.  We finish by showing that the orbit-type stratification is
locally finite.  We partially order the set of conjugacy classes of
Lie subalgebras of $\g$ by declaring
\begin{equation}\label{equation;order}
(\h_1)\le(\h_2)
\end{equation}
if $\h_2$ is conjugate to a subalgebra of $\h_1$.

\begin{theorem}\label{theorem;semicontinuous}
Every $x\in M$ has a $\g\ltimes\F$-invariant open neighbourhood $U$
with the property that for all $y\in U$ the orbit type of $y$ is
greater than or equal to the orbit type of $x$, and that $U\cap
M_{(\h)}$ is empty for all but finitely many conjugacy classes $(\h)$.
In particular, if $M/\barF$ is compact, the collection of $(\h)$ for
which $M_{(\h)}$ is nonempty is finite.
\end{theorem}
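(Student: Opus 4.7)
The plan is to apply the slice-type normal form Proposition~\ref{proposition;tube} around $x$ to read the orbit-type data off the linear model given by equation~\eqref{equation;model-stabilizer}, and then to enlarge via $\g\ltimes\F$-saturation to obtain a $\g\ltimes\F$-invariant open neighbourhood without introducing new orbit types.

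Write $\h=\stab(\g\ltimes\F,x)$ and let $\alpha\colon B_X\times F\to M$ be the $\g$-equivariant foliate open embedding supplied by Proposition~\ref{proposition;tube}, so that $x=\alpha(1,0,0)$. Formula~\eqref{equation;model-stabilizer}, which already appeared in the proof of Theorem~\ref{theorem;symmetry}\eqref{item;symmetry}, identifies the stabilizer at $\alpha(k,v,f)$ with $\Ad_k\bigl(\stab(v,\h)\bigr)$, and this is $\g$-conjugate to the subalgebra $\stab(v,\h)\subseteq\h$; hence every orbit type occurring in $\alpha(B_X\times F)$ is $\ge(\h)$. For the finiteness of orbit types within the chart, recall from the proof of Proposition~\ref{proposition;fixed-leaf} that the linear isometric $\h$-representation on $V$ factors through the Lie algebra $\bar\liek$ of a compact Lie group $\bar K$, namely the closure in the isometry group of the transversal of the integration of the image of $\h$. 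A linear action of a compact Lie group on a finite-dimensional vector space admits only finitely many orbit types, and the stabilizer $\stab(v,\h)$ is the preimage in $\h$ of $\stab(v,\bar\liek)$ under the composite homomorphism $\h\to\bar\liek$; thus only finitely many $\g$-conjugacy classes of stabilizers $\stab(v,\h)$ arise as $v$ ranges over $B_V$.

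Now let $U\subseteq M$ be the $\g\ltimes\F$-saturation of the open set $\alpha(B_X\times F)$. Because $\g\ltimes\F$-orbits are traced out by compositions of the local flows of foliate representatives of the transverse vector fields $\xi_M$, $\xi\in\g$, together with local flows of vector fields in $\X(\F)$, each such flow being a local diffeomorphism of $M$, the saturation of an open set is again open; so $U$ is open, $\g\ltimes\F$-invariant, and contains $x$. By Lemma~\ref{lemma;stabilizer}\eqref{item;conjugate}, stabilizers at points of a common $\g\ltimes\F$-orbit are $\Ad(\g)$-conjugate, so orbit type is a $\g\ltimes\F$-invariant attribute of points; consequently the orbit types occurring in $U$ coincide with those in $\alpha(B_X\times F)$: finitely many, each $\ge(\h)$. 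For the final clause, Proposition~\ref{proposition;closure-stabilizer}\eqref{item;closure-stabilizer} lets the orbit-type function descend to $M/\barF$. The sets $U$ project to open subsets there, and if $M/\barF$ is compact a finite subcover leaves only finitely many orbit types globally.

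The main delicacy is the finiteness step inside the chart: the subgroup of the isometry group integrating the $\h$-representation on $V$ need not be closed, so one must first pass to its compact closure $\bar K$ in order to import the classical finite-orbit-type theorem for linear actions of compact Lie groups, and then translate this back to a finiteness statement for the $\g$-conjugacy classes of the subalgebras $\stab(v,\h)\subseteq\h$.
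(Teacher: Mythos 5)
Your reduction to the linear model via Proposition~\ref{proposition;tube} and formula~\eqref{equation;model-stabilizer}, and your saturation argument producing a $\g\ltimes\F$-invariant open neighbourhood from the chart, are sound and agree with the paper's treatment of the semicontinuity assertion. The gap is in the finiteness step inside the chart. You pass to the compact closure $\bar{K}$ of the (possibly non-closed) integration $K$ of the image $\liek$ of $\h$ in $\lie{o}(V)$, invoke finiteness of orbit types for the linear $\bar{K}$-action, and conclude that ``only finitely many $\g$-conjugacy classes of stabilizers $\stab(v,\h)$ arise.'' But the compact-group theorem gives finiteness of the subalgebras $\stab(v,\bar{\liek})$ only up to conjugation by elements of $\bar{K}$, and for $k\in\bar{K}\setminus K$ the automorphism $\Ad_k$ of $\bar{\liek}$ does not visibly lift through the homomorphism $\phi\colon\h\to\liek\subseteq\bar{\liek}$ to an element of $\Ad(\g)$; within a single $\bar{K}$-orbit type the preimages $\phi^{-1}(\Ad_k\,\lie{s}_0)$ are a priori only limits of $\Ad(\g)$-conjugates of $\phi^{-1}(\lie{s}_0)$, and a limit of conjugate subalgebras need not be conjugate to them. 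You flag this translation back to $\h$ as ``the main delicacy,'' but then assert it rather than prove it; that assertion is exactly the missing step. It can be repaired: since $K$ is a connected subgroup dense in the compact group $\bar{K}$, one has $\bar{K}=K\cdot Z(\bar{K})$ and hence $\Ad_{\bar{K}}|_{\bar{\liek}}=\Ad_{K}|_{\bar{\liek}}$, so every conjugation can be implemented by an element of $K$ and therefore lifted to $\Ad(\g)$ --- but this structural fact about closures of connected subgroups must be stated and justified.

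The paper sidesteps the issue entirely by proving finiteness by induction on the codimension $q$ of $\F$: in the model one has $X_{(\liek)}=G\times V^\h\times W_{(\liek)}$ with $W=(V^\h)^\perp$, each stratum $W_{(\liek)}$ with $\liek\ne\h$ is scale-invariant and so meets the unit sphere $\group{S}(W)$, and the theorem's own ``in particular'' clause applied to the compact $\h$-manifold $\group{S}(W)$ of dimension less than $q$ closes the induction. If you want to keep your direct route, add the lemma on $\bar{K}=K\cdot Z(\bar{K})$; otherwise the inductive argument is the cleaner fix.
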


\begin{proof}
Let $x\in M$ and $\h=\stab(x,\g\ltimes\F)$.  The projection $M\to
M/\barF$ is open, so it is enough to show that $x$ has an open
neighbourhood $U$ such that (a)~$\bigl(\stab(y,\g\ltimes\F)\bigr)\ge
\bigl(\stab(x,\g\ltimes\F)\bigr)$ for all $y\in U$, and
(b)~$U$~intersects only finitely many orbit type strata $M_{(\liek)}$.
It follows from~\eqref{equation;model-stabilizer} that the stabilizer
of every point in the model manifold $X$ is conjugate to a subalgebra
of $\h$, so assertion~(a) follows from
Proposition~\ref{proposition;tube}.  Assertion~(b) is proved by
induction on the codimension $q$ of the foliation $\F$.  The case
$q=0$ is trivial.  Suppose the assertion is proved for all Lie
algebras $\liea$ and all Riemannian foliated manifolds $(Y,\F_Y)$
equipped with isometric transverse $\liea$-actions, where $\F_Y$ is of
codimension less than $q$.  Let us deduce that the assertion is true
for $M$.  Again computing in the model manifold $X$ at $x$, for each
subalgebra $\liek$ of $\g$ we have $X_{(\liek)}=\emptyset$ if
$(\liek)\not\ge(\h)$, and
\[X_{(\liek)}=G\times V^\h\times W_{(\liek)}\]
if $(\liek)\ge(\h)$, where $W$ is the orthogonal complement of $V^\h$
in $V$.  The orbit type stratum $W_{(\liek)}$ of the orthogonal
$\h$-module $W$ is preserved by multiplication by nonzero scalars, and
$W_{(\h)}=W^\h=\{0\}$.  Hence $W_{(\liek)}$ for $\liek\ne\h$
intersects the unit sphere $\group{S}(W)$, which is an $\h$-invariant
submanifold of $W$.  In other words, the set $\{\,(\liek)>(\h)\mid
X_{(\liek)}\ne\emptyset\,\}$ is in bijective correspondence with the
set $\{\,(\liek)>(\h)\mid\group{S}(W)_{(\liek)}\ne\emptyset\,\}$.
Since $\group{S}(W)$ is compact and has dimension $<q$, the induction
hypothesis tells us that $\group{S}(W)$, and hence $X$, contains only
finitely many orbit types, which completes the induction step.
\end{proof}

We mention without proof the following ``regular orbit type'' theorem.
The proof is similar to that of Theorem~\ref{theorem;semicontinuous}.

\begin{theorem}
If $M$ is connected, the set of orbit types of elements of $M$ has a
greatest element $(\h_0)$ with respect to the partial
order~\eqref{equation;order}.  The corresponding stratum $M_{(\h_0)}$
is connected, open, and dense.
\end{theorem}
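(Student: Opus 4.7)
The strategy is to adapt the classical principal orbit type theorem for proper Lie group actions (compare~\cite[\S\,IX.9, Prop.~9]{bourbaki;groupes-algebres}) by reducing near each point to a linear orthogonal action via Proposition~\ref{proposition;tube}, and then globalizing through the connectedness of~$M$.

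Fix $x \in M$, put $\h = \stab(x, \g\ltimes\F)$, and apply Proposition~\ref{proposition;tube} to obtain a $\g$-equivariant foliate open embedding $\alpha\colon B_X \times F \to U_x \subseteq M$ with $\alpha(1,0,0) = x$, arranging $B_V$, $B_G$, $F$, and the transversal $X$ to be connected. By~\eqref{equation;model-stabilizer}, the orbit types appearing in $U_x$ are the $G$-conjugacy classes of the subalgebras $\stab(v, \h) \subseteq \h$ as $v$ varies over $B_V$, where $\h$ acts on the Euclidean space $V$ via a homomorphism $\h \to \lie{o}(V)$ coming from Proposition~\ref{proposition;fixed-leaf}. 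The image of $\h$ integrates to a connected immersed subgroup of $\SO(V)$ whose closure $\bar H$ is a compact connected Lie group. Applying the classical principal orbit type theorem to the linear $\bar H$-action on the connected vector space $V$, the principal stratum $V_\mathrm{princ}$ is open, dense, and connected in $V$ (its complement being a union of linear subspaces of codimension at least~$2$). Translating through $\alpha$, this yields a single greatest orbit type $(\h_0(x))$ under~\eqref{equation;order} occurring in $U_x$, with $M_{(\h_0(x))} \cap U_x$ open, dense, and connected in~$U_x$.

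The assignment $x \mapsto (\h_0(x))$ is then locally constant: for $y \in U_x$ the intersection $U_x \cap U_y$ is a nonempty open set which by density meets both $M_{(\h_0(x))}$ and $M_{(\h_0(y))}$, and since each is the unique greatest orbit type in its respective neighborhood we conclude $(\h_0(x)) = (\h_0(y))$. As $M$ is connected, this function is a global constant $(\h_0)$, which is consequently the greatest orbit type on all of~$M$. Openness and density of $M_{(\h_0)}$ are immediate from the local picture. For connectedness, given $p, q \in M_{(\h_0)}$, join them by a path in the connected manifold $M$ and cover that path by a finite chain of charts $U_{x_1}, \dots, U_{x_n}$ with $p \in U_{x_1}$, $q \in U_{x_n}$, and $U_{x_i} \cap U_{x_{i+1}} \neq \emptyset$ for all $i$; each piece $U_{x_i} \cap M_{(\h_0)}$ is connected, and each consecutive overlap meets $M_{(\h_0)}$ by density, so $\bigcup_i (U_{x_i} \cap M_{(\h_0)})$ is a connected subset of $M_{(\h_0)}$ linking $p$ to~$q$.

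The main technical obstacle is to verify that the principal $\bar H$-stratum $V_\mathrm{princ}$ actually corresponds under $\alpha$ to a single $G$-conjugacy class of subalgebras of $\g$: the classical theorem controls stabilizers up to $\bar H$-conjugacy in $\bar\h$, whereas we need $\stab(v, \h) = \h \cap \stab(v, \bar\h)$ up to $\Ad(G)$-conjugacy, and $\bar H$ need not normalize $\h$. The cleanest workaround is to define the principal locus intrinsically on $M$ as the open dense set on which $\dim \stab(\cdot, \g\ltimes\F)$ attains its local minimum, and then use Lemma~\ref{lemma;stabilizer}\eqref{item;conjugate}: flowing along representatives of transverse vector fields $\xi_M$ conjugates stabilizers by $e^{\ad_\xi} \in \Ad(G)$, showing that on each connected component of the principal locus the $G$-conjugacy class of the stabilizer is locally (hence globally) constant.
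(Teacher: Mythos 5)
The paper states this theorem without proof, remarking only that the argument is similar to that of Theorem~\ref{theorem;semicontinuous} (an induction on the codimension of $\F$ through the local model $X\times F$ and the unit sphere $\group{S}(W)$), so there is no written proof to compare against. Your overall strategy --- linearize at each point via Proposition~\ref{proposition;tube}, invoke the classical principal orbit theorem for the compact connected group $\bar H\subseteq\SO(V)$, and globalize by a chain argument using connectedness of $M$ --- is a sensible route in the same spirit. But the step you flag as the ``main technical obstacle'' is a genuine gap, and the workaround you propose does not close it.

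The issue is to show that the stabilizers at principal points form a \emph{single} $\Ad(\g)$-conjugacy class. Lemma~\ref{lemma;stabilizer}\eqref{item;conjugate} only conjugates the stabilizers of two points lying in the \emph{same} $\g\ltimes\F$-orbit, whereas the principal locus is a union of many orbits: for $\g=\lie{su}(2)$ acting on $M=\R^3$ (foliated by points) via the adjoint representation, the principal orbits are the concentric spheres, and no flow of the $\xi_M$ joins points on different spheres. So ``flowing along representatives of $\xi_M$'' does not establish that the conjugacy class is \emph{locally} constant on the principal locus; as written this is a non sequitur. The repair is already available in the paper: by Theorem~\ref{theorem;semicontinuous} and the order~\eqref{equation;order}, every $y$ near $x$ has $\stab(y,\g\ltimes\F)$ conjugate to a \emph{subalgebra} of $\stab(x,\g\ltimes\F)$; hence $\dim\stab(\cdot,\g\ltimes\F)$ is upper semicontinuous, the set where it is locally constant is open and dense, and on that set equality of dimensions forces the subalgebra to be all of $\stab(x,\g\ltimes\F)$, so the orbit type is locally constant there. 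You then still need connectedness of this principal locus, for which you must return to the local model: in each chart it is an open set containing the $\bar H$-principal stratum of the ball $B_V$, which is dense and connected by the classical theorem (your parenthetical justification --- that its complement is a union of linear subspaces of codimension at least $2$ --- is false in general, e.g.\ for $\SO(3)$ acting diagonally on $\R^3\oplus\R^3$; cite the theorem itself), and an open set containing a dense connected subset is connected. With these two repairs your chain argument goes through.
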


\section{Borel-Atiyah-Segal localization}\label{section;localization}

In this section we derive from the stratification theorems of
Section~\ref{section;orbit-type} a ``contravariant'' localization
theorem in equivariant basic cohomology, Theorem~\ref{theorem;borel},
akin to the Borel-Atiyah-Segal theorem of ordinary equivariant
cohomology theory.  This answers a question posed in~\cite[Remark
  3.22]{goertsches-nozawa-toeben;chern-simons-foliations}.  The
important case where the Lie algebra is Molino's structural Lie
algebra of a Killing foliation was handled earlier
in~\cite[\S\,5]{goertsches-toeben;equivariant-basic-riemannian}.

\subsection*{Notation and conventions}

Throughout this section $\g$ denotes a finite-dimensional abelian Lie
algebra and $M$ denotes a connected manifold equipped with a complete
Riemannian foliation $(\F,g)$ and an isometric transverse action
$a\colon\g\to\X(M,\F,g)$.  For $\xi\in\g$ we denote the transverse
vector field $a(\xi)$ by $\xi_M$.

\subsection*{Contravariant localization}

The following theorem is a foliated version of results due to Borel,
Atiyah and Segal, which can be found
in~~\cite[\S\,3]{atiyah-bott;moment-map-equivariant-cohomology},
\cite[\S\,III.2]{hsiang;cohomology-theory;;1975},
or~\cite[\S\,III.3]{tom-dieck;transformation-groups}.  Let $i_\g\colon
M^\g\to M$ be the inclusion of the fixed-leaf set, which by
Theorem~\ref{theorem;symmetry}\eqref{item;fixed} is a
$\g\ltimes\F$-invariant closed submanifold of $M$.  Recall that, $\g$
being abelian, the equivariant basic cohomology groups $H_\g(M,\F)$
and $H_\g(M^\g,\F)$ (defined in
Section~\ref{section;basic-equivariant}) are modules over the
symmetric algebra $S\g^*$.

\begin{theorem}\label{theorem;borel}
Let $\g$ be abelian and $M/\barF$ compact.  The kernel and the
cokernel of the restriction homomorphism $i_\g^*\colon H_\g(M,\F)\to
H_\g(M^\g,\F)$ have support in the cone $\Gamma_M=\bigcup_{x\in
  M\backslash M^\g}\stab(\g\ltimes\F,x)$.
\end{theorem}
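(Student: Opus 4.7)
The plan is to adapt the classical Borel--Atiyah--Segal argument to the foliated setting, combining the orbit-type stratification (Theorems~\ref{theorem;symmetry} and~\ref{theorem;semicontinuous}) with the equivariant basic Mayer--Vietoris principle (Proposition~\ref{proposition;mayer-vietoris}). Since $M/\barF$ is compact, Theorem~\ref{theorem;semicontinuous} implies that only finitely many orbit types $(\h_1),\ldots,(\h_n)$ occur, with $\h_n=\g$ corresponding to $M^\g$. To prove the theorem it is enough to show that every class in $\ker(i_\g^*)$ or $\coker(i_\g^*)$ is annihilated by some element of the vanishing ideal $I(\Gamma_M)\subseteq S\g^*$.

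The key local statement I would establish is: if $U\subseteq M$ is a $\g\ltimes\F$-invariant open subset on which every point-stabilizer is conjugate into a fixed proper subalgebra $\h\subsetneq\g$, then $H_\g(U,\F)$ is annihilated by $I(\h)\subseteq S\g^*$, hence by the smaller ideal $I(\Gamma_M)$. Using the decomposition $\g=\h\times(\g/\h)$ (available because $\g$ is abelian), I would apply the relative Cartan--Chern--Weil theorem, Theorem~\ref{theorem;equivariant-ccw}, whose hypothesis is that $\Omega(U,\F)$ be $(\g/\h)$-locally free and admit an $\h$-invariant $(\g/\h)$-connection. To construct such a connection I would pass to the Molino diagram~\eqref{equation;molino}: by Proposition~\ref{proposition;induced-action} the transverse $\g$-action on $U$ corresponds to a genuine isometric $G$-action on a $K$-invariant open set $U_W\subseteq W$ commuting with $K$, and the closure of $G\times K$ in the isometry group of $W$ acts properly. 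A standard averaging argument in this proper setting produces an invariant principal connection in the $G/\exp\h$-direction, which pulls back via $\varrho\colon P\to W$ and descends along $\pi\colon P\to M$ to the desired $(\g/\h)$-connection on $\Omega(U,\F)$. Theorem~\ref{theorem;equivariant-ccw} then identifies $H_\g(U,\F)$ with an $S\h^*$-module on which $S(\g/\h)^*_{>0}$, and hence $I(\h)=S\h^*\otimes S(\g/\h)^*_{>0}$, acts trivially.

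With the local lemma in hand, I would prove the global theorem by induction on the number of orbit types. At each step, choosing a suitable orbit type $(\h_i)\ne(\g)$, Propositions~\ref{proposition;tubular} and~\ref{proposition;unity} yield a $\g\ltimes\F$-invariant open cover $\{U,V\}$ of $M$ such that $U$ is an invariant tubular neighborhood of (a thickening of) $M_{(\h_i)}$ containing only orbit types conjugate into $\h_i$, while $V$ contains strictly fewer orbit types. The local lemma kills $H_\g(U,\F)$ and $H_\g(U\cap V,\F)$ by elements of $I(\Gamma_M)$, while the inductive hypothesis handles $V$; the equivariant basic Mayer--Vietoris long exact sequence then propagates the annihilation to the restriction map on $M$. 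The main obstacle is the construction of the $(\g/\h)$-connection: the transverse $\g/\h$-action does not integrate to a Lie group action on $U$, so direct averaging on $M$ is unavailable, and the detour through the Molino manifold --- where the action becomes a proper group action amenable to averaging --- is essential. Transferring the resulting connection back through the basic de~Rham complexes of $M$, $P$, and $W$ while respecting all $K$-invariance and basic conditions is the heart of the technical work.
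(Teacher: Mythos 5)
Your overall architecture (stratification $+$ invariant Mayer--Vietoris $+$ the relative Cartan--Chern--Weil theorem) is the right framework, but the step you correctly identify as ``the heart of the technical work'' --- producing the $\liek$-connection, where $\liek$ is a complement of $\h$ in $\g$ --- does not go through as you propose. The detour through the Molino manifold fails because transverse freeness on $M$ does not translate into freeness, or even local freeness, of the induced $G$-action on $W$. By Proposition~\ref{proposition;stabilizer-correspondence} and its proof, $\xi_{M,x}=0$ means $\xi_{P,p}$ is \emph{vertical}, whereas $\xi_{W,w}=0$ means $\xi_{P,p}$ is tangent to the leaf \emph{closure} $\barF_P(p)$; the second condition is much weaker. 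For instance, for the Kronecker foliation of $T^2$ with $\g=\liec$ the structural algebra, the transverse $\g$-action on $M$ is free while $W$ is a point, so the induced action on $W$ is trivial. Consequently there is no principal-bundle structure on $U_W$ in the $\liek$-direction to average a connection on, and $G\times K$-invariant data on $W$ cannot supply the defining identity $\iota(\xi)\theta(y)=\inner{\xi,y}$. The connection has to be built \emph{on $M$ itself} from the transverse metric: when the transverse $\liek$-action is free, one dualizes the fundamental transverse vector fields using $g$, and isometry of the $\g$-action gives $\h$-invariance. This is exactly \cite[Lemma~3.18]{goertsches-toeben;equivariant-basic-riemannian}, which is the ingredient the paper invokes.

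Two further points. First, your local lemma overstates the conclusion: $S(\g/\h)^*_{>0}$ does not act \emph{trivially} on $H_\h(\Omega(U,\F)_\bbas{\liek})$ --- it acts through the equivariant characteristic homomorphism~\eqref{equation;equivariant-characteristic}, whose image consists of positive-degree basic forms on a finite-dimensional manifold; hence positive-degree elements act \emph{nilpotently} (each class is killed by a power), which still gives support in $\h$ but requires localizing rather than quotienting. Second, your induction over orbit types needs invariant tubular neighbourhoods of strata $M_{(\h_i)}$ that are only locally closed in $M$, whereas Proposition~\ref{proposition;tubular} applies to closed invariant submanifolds (and completeness may fail on the open subsets where a given stratum becomes closed). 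The paper avoids both the induction and this difficulty with a single Mayer--Vietoris cover: $U$ an invariant tube around the closed submanifold $M^\g$ and $V=M\setminus M^\g$, on which one generic $\xi$ with $f(\xi)\neq 0$ (where $f$ is a product of linear forms cutting out the finitely many stabilizer subspaces) spans a one-dimensional $\liek=\R\xi$ acting transversely freely on all of $V$ at once; the finiteness from Theorem~\ref{theorem;semicontinuous} is used only to manufacture $f$.
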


\begin{proof}
Since $M/\barF$ is compact, the collection of subalgebras $\h$ for
which $M_\h$ is nonempty is finite by
Theorem~\ref{theorem;semicontinuous}.  Hence the cone $\Gamma_M$ is
the union of a nonempty finite collection of proper linear subspaces
$\h_j$.  For each $j$ choose a nonzero $f_j\in\g^*$ which vanishes on
$\h_j$ and let $f=\prod_jf_j\in S\g^*$.  It is enough to show that
$i_\g^*$ becomes an isomorphism after inverting $f$, i.e.\ after
localizing $S\g^*$ at the multiplicative set $S=\{\,f^n\mid
n\in\N\,\}$.  Let $U$ be a $\g\ltimes\F$-invariant tubular
neighbourhood of $M^\g$ (the existence of which is guaranteed by
Proposition~\ref{proposition;tubular}) and let $V=M\backslash M^\g$.
Choose $\xi\in\g$ satisfying $f(\xi)\ne0$.  Then
$\xi\in\g\backslash\Gamma_M$, so $\xi_{M,x}\ne0$ for all $x\in V$,
which is to say that the transverse action of the Lie subalgebra
$\liek=\R\xi$ of $\g$ on $V$ is free.  Let $\h$ be a hyperplane
contained in the zero locus of $f$; then $\g=\liek\oplus\h$.  Let $\A$
be the commutative $\g$-differential graded algebra $\Omega(V,\F|_V)$.
The transverse $\liek$-action on $V$ is free, so
by~\cite[Lemma~3.18]{goertsches-toeben;equivariant-basic-riemannian}
$\A$ possesses an $\h$-invariant $\liek$-connection.  Applying
Theorem~\ref{theorem;equivariant-ccw} to $\A$ we obtain a homotopy
equivalence $\A_\g\simeq(\A_\bbas{\liek})_\h$,
i.e.\ $\A_\g\simeq\B_\bbas{\h}$, where
\[\B=\W\h\otimes\A_\bbas{\liek}.\]
Since $f$ vanishes on $\h$, multiplication by $f$ kills the subalgebra
$\W\h$ of $\B$.  The subalgebra $\A_\bbas{\liek}$ of $\B$ is a
subcomplex of the de Rham complex of the finite-dimensional manifold
$V$, and $f$ has positive degree, so a sufficiently high power of $f$
annihilates $\A_\bbas{\liek}$.  Hence $S^{-1}\B=0$ and
$S^{-1}\B_\bbas{\h}=0$.  It follows that the localization of
$H_\g(V,\F|_V)=H_\g(\A)\cong H(\B_\bbas{\h})$ at $S$ is equal to $0$.
By the same argument, the localization of $H_\g(U\cap V,\F|_{U\cap
  V})$ at $S$ is equal to $0$.  The equivariant basic Mayer-Vietoris
sequence
\[
\begin{tikzcd}[column sep=small]
\cdots\ar[r]&H_\g(M,\F)\ar[r]&H_\g(U,\F|_U)\oplus
H^*_\g(V,\F|_V)\ar[r]&H_\g(U\cap V,\F|_{U\cap V})\ar[r]&\cdots
\end{tikzcd}
\]
is exact (Proposition~\ref{proposition;mayer-vietoris}), so after
localizing at $S$ we get an isomorphism
\[
\begin{tikzcd}[column sep=small]
i_\g^*\colon S^{-1}H_\g(M,\F)\ar[r,"\cong"]&
S^{-1}H_\g(U,\F|_U)\ar[r,"\cong"]&S^{-1}H_\g(M^\g,\F|_{M^\g}),
\end{tikzcd}
\]
where the second isomorphism follows from
Lemma~\ref{lemma;cochain-homotopy}.
\end{proof}

\begin{corollary}\label{corollary;rank}
Let $\g$ be abelian and $M/\barF$ compact.  Then
\begin{align*}
\rank(H_\g^\even(M,\F))&=\dim(H^\even(M^\g,\F)),\\
\rank(H_\g^\odd(M,\F))&=\dim(H^\odd(M^\g,\F)).
\end{align*}
\end{corollary}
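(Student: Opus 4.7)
The strategy is to combine Theorem~\ref{theorem;borel} with an explicit identification of $H_\g(M^\g,\F)$. On the fixed-leaf set every transverse vector field $\xi_M$ vanishes by definition, so the Cartan operators $\iota(\xi)$ and $L(\xi)$ act as zero on $\Omega(M^\g,\F)$. Since $\g$ is abelian, $(\W\g)_\bbas{\g}=S\g^*$ carries the zero differential, and the equivariant basic complex decouples as
\[
\Omega_\g(M^\g,\F)\;\cong\;S\g^*\otimes\Omega(M^\g,\F)
\]
with equivariant differential $1\otimes d$. Passing to cohomology yields
\[
H_\g(M^\g,\F)\;\cong\;S\g^*\otimes H(M^\g,\F)
\]
as graded $S\g^*$-modules. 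Because $S\g^*$ is concentrated in even degrees, the parity of the total equivariant degree agrees with the parity of the basic-form degree, and hence $H_\g^\even(M^\g,\F)$ and $H_\g^\odd(M^\g,\F)$ are free $S\g^*$-modules of ranks $\dim H^\even(M^\g,\F)$ and $\dim H^\odd(M^\g,\F)$ respectively.

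Next I invoke the localization statement. By Theorem~\ref{theorem;semicontinuous} the compactness of $M/\barF$ forces $\Gamma_M$ to be a finite union of proper linear subspaces of $\g$, and the proof of Theorem~\ref{theorem;borel} exhibits a nonzero $f\in S\g^*$ vanishing on $\Gamma_M$ such that inverting $f$ turns $i_\g^*$ into an isomorphism. Since $S\g^*$ is an integral domain and $f\ne 0$, tensoring further with the fraction field of $S\g^*$ preserves the isomorphism. Therefore $\rank_{S\g^*}H_\g^n(M,\F)=\rank_{S\g^*}H_\g^n(M^\g,\F)$ for every degree $n$, and summing over $n$ of a fixed parity yields the two claimed identities.

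The only substantive verification is that the equivariant basic complex of $M^\g$ reduces to the tensor product above, which is a direct unwinding of the definitions once one notes that the transverse $\g$-action on $M^\g$ is zero. The rank comparison is then a formal consequence of Theorem~\ref{theorem;borel} together with the fact that $S\g^*$ is a domain, so no further obstacle is anticipated.
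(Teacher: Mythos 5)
Your argument is correct and follows essentially the same route as the paper's: identify $H_\g(M^\g,\F)\cong S\g^*\otimes H(M^\g,\F)$ as a free module using the triviality of the transverse action on the fixed-leaf set, apply Theorem~\ref{theorem;borel} to conclude that the two modules become isomorphic over the fraction field of $S\g^*$, and note that the $\Z/2\Z$-grading is respected because $S\g^*$ sits in even degrees. No gaps.
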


\begin{proof}
Let $S$ be the multiplicative set $S=S\g^*\backslash\{0\}$.  Then
$\FF=S^{-1}(S\g^*)$ is the fraction field of $S\g^*$.  By definition
the rank of an $S\g^*$-module $\M$ is equal to the dimension of the
$\FF$-vector space $S^{-1}\M$.  It follows
fromTheorem~\ref{theorem;borel} that $S^{-1}H_\g(M,\F)\cong
S^{-1}H_\g(M^\g,\F)$, so the $S\g^*$-modules $H_\g(M,\F)$ and
$H_\g(M^\g,\F)$ have the same rank.  Because the $\g$-action on $M^\g$
is trivial, we have $H_\g(M^\g,\F)\cong S\g^*\otimes H(M^\g,\F)$, so
the module $H_\g(M^\g,\F)$ is free of rank equal to the dimension of
the $\R$-vector space $H(M^\g,\F)$.  Therefore
$\rank(H_\g(M,\F))=\dim(H(M^\g,\F))$.  The same argument applies to
the even and odd parts, because localization of a $\Z$-graded
$S\g^*$-module at an ideal preserves the $\Z/2\Z$-grading.
\end{proof}

\section{Atiyah-Bott-Berline-Vergne localization}\label{section;abbv}

\numberwithin{equation}{subsection}

In this section we derive from the Borel-Atiyah-Segal localization
theorem, Theorem~\ref{theorem;borel}, a ``covariant'' localization
formula in equivariant basic cohomology,
Theorem~\ref{theorem;covariant}.  This uses the equivariant basic Thom
isomorphism theorem of our paper~\cite{lin-sjamaar;thom}.  Pushing
forward to a point then yields an integration formula,
Theorem~\ref{theorem;abbv}, akin to the formulas of Atiyah and
Bott~\cite{atiyah-bott;moment-map-equivariant-cohomology} and Berline
and Vergne~\cite{berline-vergne;zeros}.  An immediate consequence is a
Duistermaat-Heckman formula for transversely symplectic foliations,
Theorem~\ref{theorem;duistermaat-heckman}.

\subsection{Notation and conventions}

In \S\,\ref{section;abbv} we denote by $\g$ a finite-dimensional Lie
algebra and by $M$ a manifold equipped with a complete Riemannian
foliation $(\F,g)$ of codimension $q$ and an isometric transverse
action $a\colon\g\to\X(M,\F,g)$.  For $\xi\in\g$ we denote the
transverse vector field $a(\xi)$ by $\xi_M$.

If $(C,d)$ is a cochain complex we write $(C[k],d[k])$ for the
$k$-shifted complex, which is defined by $C[k]^i=C^{i+k}$ and
$d[k]=(-1)^kd$.  See also the notation index at the end.
\glossary{.@$[\cdot]$, translation functor}
%

\subsection{Pushforward homomorphisms and transverse integration}
\label{section;transverse-integration}

We review the definition of the pushforward homomorphism (also called
wrong-way or Thom-Gysin homomorphism) in equivariant basic de Rham
theory given
in~\cite[Appendix~A]{goertsches-nozawa-toeben;chern-simons-foliations},
and we state a few of its properties.  In particular we show how the
fundamental class in equivariant basic cohomology of a submanifold is
represented by the equivariant basic Thom form of its normal bundle.
These facts make use of the notion of a transverse integral of a basic
differential form, which is an integral ``across'' the leaves of a
Riemannian foliation.  Unlike the integral of a differential form over
a manifold, the transverse integral is not canonically defined, but
depends on choices.  However, it does satisfy Stokes' formula and
Poincar\'e duality.

Let $X$ be a co-oriented $\g\ltimes\F$-invariant closed submanifold of
$M$ of codimension~$r$.  Let $i=i_X\colon X\to M$ be the inclusion
map, let $\pi=\pi_X\colon NX\to X$ be the normal bundle, and let
$\zeta=\zeta_X\colon X\to NX$ be the zero section.  The normal bundle
is a foliated bundle equipped with a foliation $\F_{NX}$, and we
denote by $\Omega_{\g,\cv}(NX,\F_{NX})$ the complex of equivariant
basic forms on $NX$ which are vertically compactly supported.
Integration over the fibres of $NX$ gives a morphism of complexes
\[
\pi_*\colon\Omega_{\g,\cv}(E,\F_E)[r]\longto\Omega_\g(M,\F).
\]
An \emph{equivariant basic Thom form} of $NX$ is an $r$-form
$\tau_X\in\Omega_{\g,\cv}^r(NX,\F_{NX})$ which satisfies
$\pi_*\tau_X=1$ and $d_\g\tau_X=0$.  Such forms exists
by~\cite[Proposition~5.2.1]{lin-sjamaar;thom}.  The equivariant basic
Thom isomorphism theorem, \cite[Theorem~4.4.1]{lin-sjamaar;thom},
states that $\pi_*$ is a homotopy equivalence with homotopy inverse
equal to the map
\[
\zeta_*\colon\Omega_\g(M,\F)\longto\Omega_{\g,\cv}(E,\F_E)[r]
\]
defined by $\zeta_*(\alpha)=\tau_X\wedge\pi^*\alpha$.  Let
$f=f_X\colon NX\to M$ be an equivariant foliate tubular neighbourhood
embedding as in Proposition~\ref{proposition;tubular}.  The
\emph{pushforward homomorphism}
\begin{equation}\label{equation;pushforward}
i_*=i_{X,*}\colon\Omega(X,\F_X)\longto\Omega(M,\F)[r]
\end{equation}
is defined by $i_*=f_*\circ\zeta_*$,
i.e.\ $i_*\beta=f_*(\tau_X\wedge\pi_X^*\beta)$.  Here for each
$\gamma\in\Omega_\cv(NX,\F_{NX})$ the form $f_*(\gamma)\in\Omega(M)$
is defined by extending the form
$(f^{-1})^*(\gamma)\in\Omega(f(NX),\F)$, which is supported near $X$,
by zero.  The properties of the pushforward map $i_*$ include an
equivariant basic long exact Thom-Gysin sequence
(see~\cite[Proposition~5.3.3]{lin-sjamaar;thom}), as well as the next
lemma, which says that the map in cohomology induced by $i_*$ is
$H_\g(M,\F)$-linear and is independent of the choice of the Thom form
and the tubular neighbourhood embedding.

\begin{lemma}\label{lemma;pushforward}
Let $(M,\F,g)$ be a complete Riemannian foliated manifold equipped
with an isometric transverse Lie algebra action $\g\to\X(M,\F,g)$.
Let $X$ be a co-oriented $\g\ltimes\F$-invariant closed submanifold of
$M$ of codimension~$r$.
\begin{enumerate}
\item\label{item;pushforward-gdgm}
The pushforward homomorphism $i_*$ is a degree $r$ morphism of
$\g$-differential graded modules.  Reversing the co-orientation of $X$
changes the sign of $i_*$.
\item\label{item;pushforward-choice}
A different choice of equivariant basic Thom form $\tau_X$ or of
tubular neighbourhood embedding $f$ affects $i_*$ by a homotopy of
$\g$-differential graded modules.  It follows that the maps in
cohomology $i_*\colon H(X,\F_X)\to H(M,\F)[r]$ and $i_*\colon
H_\g(X,\F_X)\to H_\g(M,\F)[r]$ induced by $i_*$ are independent of the
choice of $\tau_X$ and $f$.
\item\label{item;linear}
The degree $r$ morphism of $\g$-differential graded modules
$\Omega(X,\F_X)\otimes\Omega(M,\F)\to\Omega(M,\F)[r]$ defined by
\[
\beta\otimes\alpha\longmapsto i_*\beta\wedge\alpha-i_*(\beta\wedge
i^*\alpha)
\]
is null-homotopic.  It follows that $i_*\colon H(X,\F_X)\to
H(M,\F)[r]$ is $H(M,\F)$-linear and that $i_*\colon H_\g(X,\F_X)\to
H_\g(M,\F)[r]$ is $H_\g(M,\F)$-linear.
\end{enumerate}
\end{lemma}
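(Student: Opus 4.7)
The plan is to prove all three assertions by directly inspecting the defining formula $i_{X,*} = f_* \circ \zeta_*$ with $\zeta_*(\alpha) = \tau_X \wedge \pi_X^*\alpha$, appealing to the equivariant basic Thom isomorphism theorem of \cite{lin-sjamaar;thom} and to Lemma~\ref{lemma;cochain-homotopy} at the two places where we need to compare a chain map with a homotopic variant of itself.

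For assertion~\eqref{item;pushforward-gdgm}, I would first note that $\pi_X\colon NX\to X$ is a $\g$-equivariant foliate submersion, so $\pi_X^*$ is a degree $0$ morphism of $\g$-dga. Multiplication by $\tau_X$ on the Weil tensor product $\W\g\otimes\Omega_\cv(NX,\F_{NX})$ is a degree $r$ endomorphism that commutes with $d_\g$, $\iota(\xi)$ and $L(\xi)$ because $\tau_X$ is $d_\g$-closed and $\g$-basic; hence $\zeta_*$ is a morphism of $\g$-dgm. Since $f$ is a $\g$-equivariant foliate open embedding, extension by zero through $f$ commutes with the same three operators, making $f_*$ a morphism of $\g$-dgm. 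Composition then gives~\eqref{item;pushforward-gdgm}, and reversing the co-orientation of $X$ sends $\tau_X\mapsto -\tau_X$, which flips the sign of $i_{X,*}$.

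For assertion~\eqref{item;pushforward-choice} I would handle the two dependencies separately. If $\tau_X$ and $\tau_X'$ are two equivariant basic Thom forms, both are $d_\g$-closed in $\Omega_{\g,\cv}^r(NX,\F_{NX})$ with vertical integral $1$; the equivariant basic Thom isomorphism theorem \cite[Theorem~4.4.1]{lin-sjamaar;thom} asserts that $\pi_{X,*}$ is a quasi-isomorphism, so there exists $\eta\in\Omega_{\g,\cv}^{r-1}(NX,\F_{NX})$ with $\tau_X-\tau_X'=d_\g\eta$, and then $K(\beta)=f_*(\eta\wedge\pi_X^*\beta)$ is a $\g$-dgm homotopy from $i_{X,*}'$ to $i_{X,*}$, as one checks using $[d_\g,\eta\wedge]=(d_\g\eta)\wedge$ and the fact that $\eta$ is $\g$-basic. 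For two tubular neighbourhood embeddings $f_0,f_1\colon NX\to M$, I would use Proposition~\ref{proposition;tubular} to connect them by a $\g$-equivariant foliate isotopy $F\colon[0,1]\times NX\to M$ and consider the $\g$-equivariant foliate embedding $\tilde F\colon[0,1]\times NX\to[0,1]\times M$, $(t,v)\mapsto(t,f_t(v))$, which is a tubular neighbourhood of $[0,1]\times X$; fibre integration over $[0,1]$ composed with pushforward by $\tilde F$ then yields the required $\g$-dgm homotopy, the key point being that $\tilde F$ is equivariant foliate so the construction manifestly commutes with $\iota(\xi)$ and $L(\xi)$.

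Assertion~\eqref{item;linear} is the part I expect to require the most care. I would expand
\[
K(\beta\otimes\alpha) \;=\; i_{X,*}\beta\wedge\alpha - i_{X,*}(\beta\wedge i^*\alpha) \;=\; f_*\bigl(\tau_X\wedge\pi_X^*\beta\wedge(f^*\alpha-\pi_X^*i^*\alpha)\bigr),
\]
using that multiplication by $f^*\alpha$ passes through $f_*$. Writing $i=f\circ\zeta_X$ gives $\pi_X^*i^*\alpha=(\zeta_X\pi_X)^*f^*\alpha$, so it suffices to homotope $(\zeta_X\pi_X)^*$ to the identity through $\g$-dgm endomorphisms of $\Omega(NX,\F_{NX})$. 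The scalar-multiplication retraction $h\colon[0,1]\times NX\to NX$, $h(t,v)=tv$, achieves this: the transverse $\g$-action on $NX$ is linear on fibres and preserves the linear foliation $\F_{NX}$, so $h$ is a $\g$-equivariant foliate homotopy between $\id_{NX}$ and $\zeta_X\pi_X$; by Lemma~\ref{lemma;cochain-homotopy} it induces a $\g$-dgm homotopy $\kappa$ with $[d_\g,\kappa]=\id-(\zeta_X\pi_X)^*$. Setting
\[
K'(\beta\otimes\alpha) \;=\; (-1)^{|\beta|+1}f_*\bigl(\tau_X\wedge\pi_X^*\beta\wedge\kappa(f^*\alpha)\bigr),
\]
which has vertically compact support because $\tau_X$ does, a routine calculation using $d_\g\tau_X=0$ and $d_\g\pi_X^*=\pi_X^*d_\g$ yields $[d_\g,K']=K$. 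The main obstacle will not be the conceptual step -- that is just the linearity of the $\g$-action on the normal bundle -- but rather the bookkeeping: verifying that $K'$ commutes with $\iota(\xi)$ and $L(\xi)$ and that all signs line up, which reduces to the corresponding properties of $\kappa$ obtained from Lemma~\ref{lemma;cochain-homotopy}.
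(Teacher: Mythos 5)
Your argument is correct and follows essentially the same route as the paper's proof in all three parts: the same direct inspection of $f_*\circ\zeta_*$ for~(i), the same two-step comparison for~(ii) (exactness of the difference of two Thom forms via the Thom isomorphism theorem, plus the homotopy built from the track of the tubular-neighbourhood isotopy), and the same reduction of~(iii) to the $\g$-equivariant foliate retraction of $NX$ onto its zero section followed by Lemma~\ref{lemma;cochain-homotopy}. The only discrepancy is the sign in your secondary homotopy $K'$, which in the paper's convention is $(-1)^{r+\abs{\beta}}$ (the degree of $\zeta_*\beta$) rather than $(-1)^{\abs{\beta}+1}$, up to the orientation of the homotopy $\kappa$ — precisely the bookkeeping you flagged.
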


\begin{proof}
\eqref{item;pushforward-gdgm}~That $i_*=f_*\circ\zeta_*$ is morphism
of $\g$-dgm follows from the fact that $\tau_X$ is $\g$-equivariant.
If we equip $NX$ with the opposite orientation, the Thom form changes
to $-\tau_X$, so the pushforward map changes to $-i_*$.

\eqref{item;pushforward-choice}~Let $\tau_X'$ be another equivariant
basic Thom form for $NX$ and let $f'\colon NX\to M$ be another
equivariant foliate tubular neighbourhood embedding.  Then we have a
second Thom map $\zeta_*'\beta=\tau_X'\wedge\pi_X^*\beta $ and a
second pushforward morphism $i_*'=f_*'\circ\zeta_*'$, and we must show
that $i_*'$ is homotopic to $i_*$.  It suffices to show that
$\zeta_*'$ is homotopic to $\zeta_*$ and that $f_*'$ is homotopic to
$f_*$.  It follows from the equivariant basic Thom isomorphism
theorem, \cite[Theorem~4.4.1]{lin-sjamaar;thom}, that
$\tau_X'-\tau_X=d\upsilon$ for some form
$\upsilon\in\Omega_{\cv,\g}^{r+1}(NX,\F_{NX})$, so
\[l(\tau_X')-l(\tau_X)=[d,l(\upsilon)],\]
where ``$l$'' means left multiplication.  This implies
\[
\zeta_*'-\zeta_*=[d,l(\upsilon)\circ\pi^*],
\]
which shows that $\zeta_*'$ is homotopic to $\zeta_*$.  It follows
from Proposition~\ref{proposition;tubular} that there is an
equivariant foliate isotopy $F\colon[0,1]\times NX\to M$ from $f$ to
$f'$.  The track of this isotopy is an embedding
$\hat{F}\colon[0,1]\times NX\to[0,1]\times M$, so we have an extension
by zero map
\[
\hat{F}_*\colon\Omega([0,1]\times
NX,*\times\F_{NX})\longto\Omega([0,1]\times M,*\times\F).
\]
Let $\pr_{NX}\colon[0,1]\times NX\to NX$ and $\pr_M\colon[0,1]\times
M\to M$ be the cartesian projections and define $\kappa$ to be the
composition of the maps
\[
\begin{tikzcd}[row sep=tiny]
\Omega(NX,\F_{NX})\ar[r,"\pr_{NX}^*"]&
\Omega([0,1]\times NX,*\times\F_{NX})
\\
\qquad\qquad\ar[r,"\hat{F}_*"]&
\Omega([0,1]\times M,*\times\F)\ar[r,"\pr_{M,*}"]&\Omega(M,\F)[-1].
\end{tikzcd}
\]
Define $j_t\colon M\to[0,1]\times M$ by $j_t(x)=(t,x)$.  Then
$[d,\pr_{M,*}]=j_1^*-j_0^*$ by Stokes' formula; see
e.g.\ \cite[Corollary~B.4]{lin-sjamaar;thom}.  It follows that
\[
[d,\kappa]=[d,\pr_{M,*}]\circ\hat{F}_*\circ\pr_{NX}^*=
(j_1^*-j_0^*)\circ\hat{F}_*\circ\pr_{NX}^*=f'_*-f_*,
\]
which shows that $f_*'$ is homotopic to $f_*$.

\eqref{item;linear}~For $\alpha\in\Omega(M,\F)$ and
$\beta\in\Omega(X,\F_X)$ we have
\[
i_*\beta\wedge\alpha-i_*(\beta\wedge i^*\alpha)=
f_*\bigl(\zeta_*\beta\wedge(\id_{NX}^*-\pi^*\zeta^*)f^*\alpha\bigr).
\]
The map $\zeta\circ\pi\colon NX\to NX$ is homotopic to the identity
via a $\g$-equivariant foliate homotopy.  Let
$\kappa\colon\Omega(NX,\F_{NX})\to\Omega(NX,\F_{NX})[-1]$ be a
homotopy from $\id_{NX}^*$ to $\pi^*\zeta^*$ as in
Lemma~\ref{lemma;cochain-homotopy}.  Define
\[\bar\kappa\colon\Omega(NX,\F_{NX})\to\Omega(NX,\F_{NX})[r-1]\]
by $\bar\kappa(\beta\otimes\alpha)=
(-1)^{r+\abs{\beta}}f_*(\zeta_*\beta\wedge\kappa f^*\alpha)$.  A
calculation shows that $[\iota(\xi),\bar\kappa]=[L(\xi),\bar\kappa]=0$
for all $\xi\in\g$ and that
\[
\begin{aligned}
i_*\beta\wedge\alpha-i_*(\beta\wedge i^*\alpha)&=
f_*(\zeta_*\beta\wedge(d\kappa+\kappa d)f^*\alpha)\\
&=d\bar\kappa\bigl(\beta\otimes\alpha)+
(-1)^r\bar\kappa(d\beta\otimes\alpha+(-1)^{\abs{\beta}}\beta\otimes
d\alpha\bigr)\\
&=[d,\bar\kappa](\beta\otimes\alpha).
\end{aligned}
\]
Therefore $\bar\kappa$ is a homotopy of $\g$-dgm of degree $r-1$.
\end{proof}

Let us call a subset of $M$ \emph{transversely compact} if it is
closed and its image in the leaf closure space $M/\barF$ is compact.
A subset $C$ of $M$ is transversely compact if and only if the
corresponding subset $\varrho(\pi^{-1}(C))$ of the Molino manifold $W$
is compact.  We denote the family of all transversely compact subsets
by ``$\ct$''.  The basic differential forms with transversely compact
supports constitute a subcomplex $\Omega_\ct(M,\F)$ of the basic de
Rham complex $\Omega(M,\F)$.
\glossary{ct@$\ct$, transversely compact supports}

For simplicity we will from now on assume that the foliation $\F$ is
transversely oriented and that the top exterior power $\det(\liec)$ of
the Molino centralizer bundle is a trivial flat line bundle.  This
assumption implies that the Molino manifold $W$ is orientable.
Without this assumption the following definition is not quite correct,
but needs to be amended by using forms with coefficients in the
homological orientation sheaf of~\cite{sergiescu;cohomologie-basique}.

\begin{definition}\label{definition;integral}
A \emph{transverse integral} $\fint$ is a collection of $\R$-linear
functionals
\[\fint_X\colon\Omega_\ct(X,\F_X)\to\R,\]
one for each co-oriented $\F$-invariant closed submanifold $X$ of $M$,
which satisfy the following conditions:
\begin{enumerate}
\renewcommand\theenumi{\alph{enumi}}
\renewcommand\labelenumi{(\theenumi)}
\item\label{item;component}
$\fint_M\alpha=\fint_M\alpha_q$ for all $\alpha\in\Omega_\ct(M,\F)$,
  where $\alpha_q$ is the component of $\alpha$ of degree equal to
  $q=\dim(M)-\dim(\F)$;
\item\label{item;test}
for $\alpha\in\Omega(M,\F)$ we have $\alpha=0$ if and only if
$\fint_M\alpha\wedge\beta=0$ for all $\beta\in\Omega_\ct(M,\F)$;
\item\label{item;transverse-stokes}
Stokes' formula holds: $\fint_Md\alpha=0$ for all
$\alpha\in\Omega_\ct(M,\F)$;
\item\label{item;poincare}
Poincar\'e duality holds: the pairing
\[H(M,\F)\times H_\ct(M,\F)\longto\R\]
defined by $([\alpha],[\beta])\mapsto\fint_M\alpha\wedge\beta$ induces
an isomorphism $H(M,\F)[q]\to H_\ct(M,\F)^*$; and
\item\label{item;fundamental}
the form $i_{X,*}1=f_{X,*}\tau_X\in\Omega(M,\F)$ represents the basic
fundamental class of $X$ in the sense that
$\fint_Xi_X^*\alpha=\fint_Mi_{X,*}1\wedge\alpha$ for all
$\alpha\in\Omega_\ct(M,\F)$, where $i_{X,*}$ denotes the pushforward
map~\eqref{equation;pushforward}.
\end{enumerate}
\end{definition}

\begin{remark}\label{remark;integral}
It follows from
Lemma~\ref{lemma;pushforward}\eqref{item;pushforward-choice} and
Stokes' formula~\eqref{item;transverse-stokes} that the integral
$\fint_Mi_{X,*}1\wedge\alpha$ is independent of the choice of the Thom
form $\tau_X$ and the embedding $f_X$.  However, reversing the
co-orientation of $X$ changes the sign of $i_*$
(Lemma~\ref{lemma;pushforward}\eqref{item;pushforward-gdgm}) and
therefore, by condition~\eqref{item;fundamental}, changes the sign of
the transverse integral $\fint_X$.
\end{remark}

Transverse integrals have the following elementary properties.

\begin{lemma}\label{lemma;transverse}
Let $(M,\F,g)$ be a complete Riemannian foliated manifold equipped
with an isometric transverse Lie algebra action $\g\to\X(M,\F,g)$.
Let $\fint$ be a transverse integral and let $i=i_X\colon X\to M$ be a
co-oriented $\F$-invariant closed submanifold of $M$.
\begin{enumerate}
\item\label{item;adjunction}
The homomorphisms $i^*$ and $i_*$ are adjoint in the sense that
\[\fint_X\beta\wedge i^*\alpha=\fint_M i_*\beta\wedge\alpha\]
for all $\alpha\in\Omega_\ct(M,\F)$ and $\beta\in\Omega(X,\F_X)$.
\item\label{item;integrals}
Let $\fint'$ be another transverse integral.  There exists a nonzero
constant $k$, independent of $X$, such that
$\fint_X'\beta=k\fint_X\beta$ for all closed
$\beta\in\Omega_\ct(X,\F_X)$.
\item\label{item;transverse-invariant}
Let $\beta\in\Omega_\ct(X,\F_X)$.  Then
$\fint_XL(v)\beta=\fint_X\iota(v)\beta=0$ for all transverse Killing
vector fields $v\in\X(X,\F_X,g)$.
\item\label{item;equivariant-stokes}
Suppose $X$ is $\g\ltimes\F$-invariant.  The $\W\g$-linear functional
\[
\begin{tikzcd}
\fint_X=\id_{\W\g}\otimes\fint_X\colon\W\g\otimes\Omega_\ct(X,\F_X)
\ar[r]&\W\g
\end{tikzcd}
\]
is a morphism of $\g$-differential graded modules of degree
$-\codim(\F_X)$.  In particular $\fint_X\beta\in(S\g^*)^\g$ and
$\fint_Xd_\g\beta=0$ for all $\beta$ in the Weil complex
$\Omega_{\g,\ct}(X,\F_X)$.
\end{enumerate}
\end{lemma}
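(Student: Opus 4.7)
I would handle the four parts in the order (iii), (iv), (i), (ii), since each relies on earlier ones; the substantive new idea lives in~(i), while the rest is formal.

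\textbf{Parts (iii) and (iv).} Fix a foliate representative $\tilde v$ of $v$, and set $r=\codim_M X$, so basic forms on $X$ are concentrated in degrees $0,\dots,q-r$. By property~(a) of Definition~\ref{definition;integral}, $\fint_X$ sees only the top-degree piece. Since $\iota(v)\beta$ has form degree $\abs{\beta}-1<q-r$, its top component vanishes and $\fint_X\iota(v)\beta=0$. Because no basic forms exist in degree $q-r+1$, the top piece $\beta_{q-r}$ satisfies $d\beta_{q-r}=0$, so Cartan's formula gives $L(v)\beta_{q-r}=d\iota(v)\beta_{q-r}$, which integrates to zero by Stokes~(c); that proves (iii). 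For (iv), the hypothesis that $X$ is $\g\ltimes\F$-invariant and the action is isometric guarantees that each $\xi_X$ is a transverse Killing vector field, so (iii) together with Stokes~(c) shows that $\fint_X$ commutes with $d$, $\iota(\xi)$, and $L(\xi)$ on $\Omega_\ct(X,\F_X)$. Extending by $\W\g$-linearity produces a morphism of $\g$-dgm of degree $-(q-r)=-\codim(\F_X)$; the refinements $\fint_X\beta\in(S\g^*)^\g$ and $\fint_X d_\g\beta=0$ on $\g$-basic $\beta$ follow since morphisms of $\g$-dgm preserve basicness and $(\W\g)_\bbas\g=(S\g^*)^\g$ carries the zero differential.

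\textbf{Part (i).} The plan is to choose an extension $\tilde\beta\in\Omega(M,\F)$ of $\beta$ such that $i_{X,*}\beta=i_{X,*}1\wedge\tilde\beta$ holds on the nose, reducing~(i) directly to condition~(e) of Definition~\ref{definition;integral}. I would fix a foliate tubular neighborhood embedding $f\colon NX\to M$ (Proposition~\ref{proposition;tubular}), an equivariant basic Thom form $\tau_X$ for $NX$, and, using Proposition~\ref{proposition;unity}, a vertically compactly supported basic cutoff function $\chi$ on $NX$ with $\chi\equiv 1$ on the support of $\tau_X$ and on the zero section. Setting $\tilde\beta=f_*(\chi\pi_X^*\beta)$, I find $i^*\tilde\beta=\beta$ and, using $\chi\tau_X=\tau_X$,
\[
i_{X,*}1\wedge\tilde\beta=f_*(\tau_X)\wedge f_*(\chi\pi_X^*\beta)=f_*(\chi\tau_X\wedge\pi_X^*\beta)=f_*(\tau_X\wedge\pi_X^*\beta)=i_{X,*}\beta.
\]
Condition~(e) applied to $\tilde\beta\wedge\alpha\in\Omega_\ct(M,\F)$ then gives
\[
\fint_M i_{X,*}\beta\wedge\alpha=\fint_M i_{X,*}1\wedge(\tilde\beta\wedge\alpha)=\fint_X i^*(\tilde\beta\wedge\alpha)=\fint_X\beta\wedge i^*\alpha.
\]

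\textbf{Part (ii).} Reducing to connected components, assume $M$ and $X$ are connected. Poincar\'e duality~(d) gives $H^q_\ct(M,\F)\cong\R$, and both $\fint_M$ and $\fint'_M$ induce isomorphisms to $\R$, so they differ by a unique nonzero scalar $k$ on closed top-degree forms; property~(a) combined with the automatic closedness of top-degree basic forms extends the equality to all of $\Omega_\ct(M,\F)$. Combining~(e) for $\fint'$, this scaling, and~(e) for $\fint$ yields, for every $\alpha\in\Omega_\ct(M,\F)$,
\[
\fint'_X i^*\alpha=\fint'_M i_{X,*}1\wedge\alpha=k\fint_M i_{X,*}1\wedge\alpha=k\fint_X i^*\alpha.
\]
Since $H^{q-r}_\ct(X,\F_X)\cong\R$ by Poincar\'e duality on $X$, it suffices to exhibit one $\alpha$ with $\fint_X i^*\alpha\ne 0$; extending a nonzero top-degree form on $X$ via the tubular neighborhood as in~(i) produces such an $\alpha$. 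The main technical hurdle is the existence of the basic cutoff $\chi$ in~(i) with $\chi\equiv 1$ on $\supp\tau_X$; this is routine given Proposition~\ref{proposition;unity}, after which everything else flows formally.
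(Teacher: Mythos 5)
Your argument is in substance the paper's: everything is funneled through condition~(\ref{item;fundamental}) of Definition~\ref{definition;integral} via the identity $i_{X,*}\beta=i_{X,*}1\wedge\tilde\beta$ for an extension $\tilde\beta$ of $\beta$, and parts~(\ref{item;integrals})--(\ref{item;equivariant-stokes}) are then formal. Your treatment of part~(\ref{item;adjunction}) is actually slightly more careful than the paper's, which writes $f_*\pi_X^*\beta$ without the vertically compactly supported basic cutoff $\chi$ that is needed for the extension by zero to make sense; introducing $\chi$ with $\chi\equiv1$ on $\supp\tau_X$ and checking $\chi\tau_X=\tau_X$ is the right repair.

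There is, however, a flaw in your claimed order of deduction. You propose to prove (\ref{item;transverse-invariant}) and (\ref{item;equivariant-stokes}) first, but your argument for (\ref{item;transverse-invariant}) invokes ``property~(a)'' and ``Stokes~(c)'' for the functional $\fint_X$ on a proper submanifold $X$, whereas Definition~\ref{definition;integral} postulates (a)--(d) only for $\fint_M$; the sole axiom tying $\fint_X$ to anything is~(e). The statements $\fint_X\beta=\fint_X\beta_{q_X}$ and $\fint_Xd\gamma=0$ must therefore be \emph{derived}, via $\fint_X\gamma=\fint_Mi_{X,*}\gamma$ (the case $\alpha$ an extension of $1$ near $\supp\gamma$ of your part~(\ref{item;adjunction}), or directly from~(e) together with $i_{X,*}$ being a degree-$r$ cochain map, Lemma~\ref{lemma;pushforward}), and only then do degree counting and Stokes on $M$ give~(\ref{item;transverse-invariant}). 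So (\ref{item;adjunction}) must come first, as in the paper. The same remark applies to your appeal to ``Poincar\'e duality on $X$'' in part~(\ref{item;integrals}), which is not an axiom; it is also unnecessary, since the surjectivity of $i^*\colon\Omega_\ct(M,\F)\to\Omega_\ct(X,\F_X)$ furnished by your cutoff construction already yields $\fint_X'=k\fint_X$ on all of $\Omega_\ct(X,\F_X)$. With the parts reordered and these transfers made explicit, your proof is correct and coincides with the paper's.
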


\begin{proof}
\eqref{item;adjunction}~This follows from
Definition~\ref{definition;integral}\eqref{item;fundamental} and the
fact that $i_*\beta=i_*1\wedge f_*\pi_X^*\beta$.

\eqref{item;integrals}~By Poincar\'e duality
(Definition~\ref{definition;integral}\eqref{item;poincare}), each of
the transverse integrals $\fint_M$ and $\fint_M'$ gives an isomorphism
$H_\ct^q(M,\F)^*\cong H^0(M,\F)\cong\R$.  Hence there exists a nonzero
constant $k$ such that $\fint_M'\alpha=k\fint_M\alpha$ for all
$\alpha\in\Omega_\ct^q(M,\F)$.  It follows
from~\eqref{item;adjunction} that $\int_X\beta=\int_Mi_*\beta$ and
$\int_X'\beta=\int_M'i_*\beta$ for all $\beta\in\Omega_\ct(X)$.
Therefore $\fint_X'\beta=c\fint_X'\beta$ for all
$\beta\in\Omega_\ct(X)$.

\eqref{item;transverse-invariant}~Using $\int_X\beta=\int_Mi_*\beta$
and Definition~\ref{definition;integral}\eqref{item;component} we get
$\fint_X\beta=\fint_X\beta_{q_X}$ for all
$\beta\in\Omega_\ct(X,\F_X)$, where $\beta_{q_X}$ is the component of
$\beta$ of degree equal to $q_X=\dim(X)-\dim(\F)$.  This implies the
identity $\fint_X\iota(v)\beta=0$, because a basic form $\beta$ on $X$
of degree $q_X+1$ is equal to~$0$.  We also have
$\fint_Xd\beta=\fint_Mi_*d\beta=\fint_Mdi_*\beta=0$ because of Stokes'
formula
(Definition~\ref{definition;integral}\eqref{item;transverse-stokes}).
Hence $\fint_XL(v)\beta=\fint_X(d\iota(v)\beta+\iota(v)d\beta)=0$.

\eqref{item;equivariant-stokes}~That $\fint_M$ is a morphism of
$\g$-differential modules follows
from~\eqref{item;transverse-invariant}.  By definition
$\alpha\in\Omega_{\g,\ct}(M,\F)$ is a $\g$-basic element of
$\W\g\otimes\Omega_{\g,\ct}(M,\F)$.  Hence
$\fint_M\alpha\in(\W\g)_\bbas{\g}=(S\g^*)^\g$.  The differential of
the Weil algebra $d_{\W\g}$ vanishes on basic elements (see
e.g.\ \cite[Proposition~A.4.2(iii)]{lin-sjamaar;thom}), so
$\fint_Md_\g\alpha=d_{\W\g}\fint_M\alpha=0$.
\end{proof}

We are aware of two approaches to transverse integration.  The first,
which works if $M$ is compact, is that of Kamber and Tondeur
(see~\cite[Ch.~7]{tondeur;geometry-foliations}
or~\cite[\S\,3]{goertsches-nozawa-toeben;chern-simons-foliations}) and
the second is that of Sergiescu~\cite{sergiescu;cohomologie-basique}.
We review Sergiescu's approach and show that it defines a transverse
integral in our sense.

Let $\nu_{\liek}$ be the volume element on the Lie algebra
$\liek=\lie{o}(q)$ defined by the normalized invariant inner product
$g_{\liek}$.  Let $\theta_\LC$ be the transverse Levi-Civita
connection of the Molino bibundle $P$
(Section~\ref{section;molino-diagram}).  Then
$\nu=\theta_\LC^*(\nu_{\liek})$ is an $\F_P$-basic form on $P$ that
restricts to a normalized invariant volume form on each fibre of the
bundle $\pi\colon P\to M$.  The projection formula gives
$\pi_*(\nu\wedge\pi^*\alpha)=\pi_*\nu\wedge\alpha=\alpha$ for all
forms $\alpha$ on $M$, which means that the map
\[
\Omega(M,\F)\longto\Omega(P,\F_P)\colon\quad\alpha\longmapsto
\nu\wedge\pi^*\alpha
\]
is a right inverse of the fibre integration map $\pi_*$.  Choose a
nonzero global flat section $s$ of the real line bundle $\det(\liec)$,
where $\liec$ is the centralizer bundle of $(M,\F)$ defined in
Theorem~\ref{theorem;molino-2}.  For every $\gamma\in\Omega(P,\F_P)$
the contraction $\iota_{\pi^\dag(s)}\gamma$ with the multivector field
$\pi^\dag(s)$ is well-defined.  Every $\F_P$-basic form is
$\barF_P$-invariant, so $\iota_{\pi^\dag(s)}\gamma$ is $\barF_P$-basic
and therefore equal to $\varrho^*\mu$ for a unique form $\mu$ on $W$,
which we will denote by $\mu=\varrho_\dag\iota_{\pi^\dag(s)}\gamma$.
Let $X$ be a co-oriented $\g\ltimes\F$-invariant closed submanifold of
$M$ of codimension~$r$.  Then the corresponding $G\times K$-invariant
submanifold $X_W=\varrho(\pi(X))$ of the Molino manifold $W$ is
oriented.  For simplicity let us write $\pi$ for the restriction of
$\pi$ to $X_P=\pi^{-1}(X)$, and similarly for $\varrho$, $\nu$, and
$s$.  Then for $\beta\in\Omega_\ct(X,\F)$ we define
\begin{equation}\label{equation;integral}
\fint_X\beta=\int_{X_W}\varrho_\dag\iota_{\pi^\dag(s)}
(\pi^*\beta\wedge\nu).
\end{equation}
Here we adopt the usual convention that $\int_{X_W}\mu$ is the
integral of the component of $\mu$ of degree $\dim(X_W)$ If $\beta$ is
homogeneous of degree $j$, then the degree of
$\varrho_\dag\iota_{\pi^\dag(s)}(\pi^*\beta\wedge\nu)$ is
$j+\frac12q(q-1)-l$.  Using $\dim(X_W)=q_X+\frac12q(q-1)-l$, where
$q_X=\dim(X)-\dim(\F)$ and $l$ is the rank of the centralizer bundle
$\liec$ (see table of dimensions at the end of the notation index), we
see that $\fint_X\beta$ is the transverse integral of the component of
$\beta$ of degree $q_X=\dim(X)-\dim(\F)$.  In particular the
transverse integral $\fint_X1$ of $1\in\Omega^0(M,\F)$ is defined if
$X$ is a closed leaf of $\F$.

\begin{proposition}\label{proposition;integral}
Let $(M,\F,g)$ be a complete Riemannian foliated manifold equipped
with an isometric transverse Lie algebra action $\g\to\X(M,\F,g)$.
Suppose that the foliation $\F$ is transversely oriented and that the
line bundle $\det(\liec)$ is trivial.  Then the
formula~\eqref{equation;integral} defines a transverse integral in the
sense of Definition~\ref{definition;integral}.
\end{proposition}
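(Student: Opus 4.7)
The plan is to verify in turn the five axioms~(a)--(e) of Definition~\ref{definition;integral} for the functional defined by~\eqref{equation;integral}. The degree axiom~(a) is a direct consequence of the dimension count already carried out in the paragraph preceding the proposition: if $\beta$ has degree $j$, the integrand $\varrho_\dag\iota_{\pi^\dag(s)}(\pi^*\beta\wedge\nu)$ has degree $j+\tfrac12 q(q-1)-l$, and the integration $\int_{X_W}$ extracts precisely degree $\dim(X_W)=q_X+\tfrac12 q(q-1)-l$.

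For axioms~(b), (c) and~(d) the strategy is to recognize the operation
\[
\Phi\colon\Omega_\ct(M,\F)\longto\Omega_\cc(W),\qquad
\beta\longmapsto\varrho_\dag\iota_{\pi^\dag(s)}(\pi^*\beta\wedge\nu),
\]
as a chain map of degree $\tfrac12 q(q-1)-l$, after which $\fint_X=\int_{X_W}\circ\,\Phi$ inherits the corresponding properties from ordinary integration on $W$. That $\Phi$ commutes with $d$ combines three ingredients already available: $\pi^*$ is a chain map; $\nu=\theta_\LC^*(\nu_\liek)$ is $\F_P$-basic and fibre-integrates to~$1$; and the multisection $\pi^\dag(s)$ spans the flat subbundle $\liec_P$, whose Lie derivative annihilates $\F_P$-basic forms in view of the centralizer property and the flatness of $s$, so that $[d,\iota_{\pi^\dag(s)}]=0$ on the image of $\pi^*(\cdot)\wedge\nu$. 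Stokes' formula~(c) then becomes ordinary Stokes on $W$ applied to $\Phi(d\beta)=d\Phi(\beta)$, while non-degeneracy~(b) and Poincaré duality~(d) follow from Sergiescu's transverse Poincaré duality theorem~\cite{sergiescu;cohomologie-basique}, which in our orientable setting translates, through the bibundle $P$, into the required isomorphism induced by $\Phi$ on basic de Rham cohomology.

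The main obstacle is axiom~(e), the representation of the basic fundamental class of $X$ by $i_{X,*}1$. To prove $\fint_M i_{X,*}1\wedge\alpha=\fint_X i_X^*\alpha$ I would unwind the right-hand side to
\[
\int_{M_W}\Phi\bigl(f_{X,*}\tau_X\wedge\alpha\bigr)
\]
and then lift the tubular-neighbourhood data along the Molino bibundle using Remark~\ref{remark;tubular}: the isomorphism $\pi^*NX\cong\varrho^*NX_W$ of metric vector bundles with connection lets us identify the equivariant basic Thom form $\tau_X$, after absorbing the vertical volume datum $\nu$ and the contraction $\iota_{\pi^\dag(s)}$ into the vertical integration, with an ordinary Thom form $\tau_{X_W}$ of the oriented submanifold $X_W\subseteq W$; similarly $f_X$ corresponds to a $G\times K$-equivariant tubular neighbourhood embedding $f_{X_W}$. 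Under these identifications $\Phi\bigl(f_{X,*}\tau_X\wedge\alpha\bigr)=f_{X_W,*}\tau_{X_W}\wedge\Phi(\alpha)$, so the classical Thom-form representation of the fundamental class on the oriented Riemannian manifold $W$ gives
\[
\int_{M_W}f_{X_W,*}\tau_{X_W}\wedge\Phi(\alpha)=\int_{X_W}i_{X_W}^*\Phi(\alpha),
\]
which by naturality of $\Phi$ with respect to the inclusion $X\hookrightarrow M$ equals $\int_{X_W}\Phi(i_X^*\alpha)=\fint_X i_X^*\alpha$. The technical heart of the proof is this naturality statement, together with the verification that $\pi^\dag(s)$ is tangent to the $\varrho$-fibres along $X_P=\pi^{-1}(X)$ so that the contraction $\iota_{\pi^\dag(s)}$ really does descend to vertical integration along $X_W$ in a manner compatible with the construction of $\tau_{X_W}$.
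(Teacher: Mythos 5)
Your treatment of condition~(e) follows essentially the same route as the paper: lift everything through the Molino bibundle, use the isomorphism $\pi^*NX\cong\varrho^*NX_W$ of Remark~\ref{remark;tubular} to identify the (universal) equivariant basic Thom form of $NX$ with the Thom form of $NX_W$ via $\pi^*(\tau_X)=\varrho^*(\tau_{X_W})$, and then invoke the classical fact that $\tau_{X_W}$ represents the Poincar\'e dual of $X_W$ in $W$. The point you flag as the ``technical heart'' is exactly the step the paper makes precise by choosing $\tau_X$ to be the \emph{universal} Thom form of \cite[\S\,4.5]{lin-sjamaar;thom}: since that form is $\barF_{NX}$-basic, the contraction $\iota_{\pi^\dag(s)}$ annihilates $\pi^*f_*(\tau_X)$ and therefore passes entirely onto the factor $\pi^*\alpha\wedge\nu$, which is what lets $f_{W,*}\tau_{X_W}$ be pulled outside the operator $\varrho_\dag\iota_{\pi^\dag(s)}$; an arbitrary choice of $\tau_X$ would not obviously have this property, so you should make that choice explicit.

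For conditions~(a)--(d) the paper simply cites \cite[\S\,2]{sergiescu;cohomologie-basique}, and your attempted direct argument for Stokes' formula has a gap: the map $\Phi(\beta)=\varrho_\dag\iota_{\pi^\dag(s)}(\pi^*\beta\wedge\nu)$ is not a chain map for the reason you give, because the obstruction is not only the commutator $[d,\iota_{\pi^\dag(s)}]$ but also the fact that $\nu=\theta_\LC^*(\nu_\liek)$ is not closed ($d\nu$ involves curvature terms of the transverse Levi-Civita connection), so $d(\pi^*\beta\wedge\nu)$ acquires a term $\pm\,\pi^*\beta\wedge d\nu$ that your argument does not address. One must either show that these correction terms die upon restriction to top degree and integration over $W$, or (as the paper does) defer to Sergiescu; as written, the claim $\Phi(d\beta)=d\Phi(\beta)$ is unjustified.
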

 
\begin{proof}
Conditions~\eqref{item;component}--\eqref{item;poincare} of
Definition~\ref{definition;integral} are verified
in~\cite[\S\,2]{sergiescu;cohomologie-basique}.
Condition~\eqref{item;fundamental} is checked as follows.  By
definition
\[
\fint_Mi_*1\wedge\alpha=\fint_Mf_*\tau_X\wedge\alpha=
\int_W\varrho_\dag\iota_{\pi^\dag(s)}
\bigl(\pi^*(f_*\tau_X\wedge\alpha)\wedge\nu\bigr).
\]
In this formula we may take $\tau_X$ to be any equivariant basic Thom
form for the normal bundle $NX$.  Let us take $\tau_X$ to be the
universal equivariant basic Thom form defined
in~\cite[\S\,4.5]{lin-sjamaar;thom}.  Then $\tau_X$ is
$\barF_{NX}$-basic by~\cite[Lemma~4.5.2(iv)]{lin-sjamaar;thom}, so
$\iota_{\pi^\dag(s)}\pi^*f_*(\tau_X)=\pi^*f_*\iota_s(\tau_X)=0$.
Under the Molino correspondence $M\leftrightarrow W$ the submanifold
$X$ corresponds to the submanifold $X_W=\varrho(\pi^{-1}(X))$ and we
have an isomorphism of metric vector bundles with connection
$\pi^*NX\cong\varrho^*NX_W$.  (See Remark~\ref{remark;tubular}.)  The
universality property of the Thom form
(\cite[Lemma~4.5.2(ii)]{lin-sjamaar;thom}) gives
$\pi^*(\tau_X)=\varrho^*(\tau_{X_W})$, where $\tau_{X_W}$ is the
universal Thom form of the bundle $NX_W$.  Hence
\begin{align*}
\fint_Mi_*1\wedge\alpha&=\int_W\varrho_\dag\iota_{\pi^\dag(s)}
\bigl(\pi^*(f_*\tau_X\wedge\alpha)\wedge\nu\bigr)\\
&=\int_Wf_{W,*}\tau_{X_W}\wedge
\varrho_\dag\iota_{\pi^\dag(s)}\bigl(\pi^*\alpha\wedge\nu\bigr)\\
&=\int_{X_W}i_{X_W}^*\varrho_\dag\iota_{\pi^\dag(s)}
\bigl(\pi^*\alpha\wedge\nu\bigr)\\
&=\fint_Xi^*\alpha,
\end{align*}
where we used the fact that the Thom form $\tau_{X_W}$ represents the
Poincar\'e dual of the submanifold $X_W$; see e.g.~\cite[Proposition
  6.24]{bott-tu;differential-forms}.
\end{proof}

We close this section by reiterating a point made by T\"oben.  Suppose
the submanifold $X$ has the property that all leaves of $\F_X$ are
closed.  Then the leaf space $X/\F_X=X/\barF_X=X_W/K$ is an orbifold
(Proposition~\ref{proposition;leaf-orbifold}), and a basic
differential form $\beta$ on $X$ represents a differential form on the
orbifold.  The transverse integral $\fint_X\beta$ relates to the
orbifold integral $\int_{X/\F_X}\beta$ as follows.  The
\emph{principal stratum} of $X$ is the open dense subset $X^\prin$
consisting of all $x\in X$ for which the holonomy of $\F_X$ vanishes
at $x$.  The corresponding submanifold $\varrho(\pi(X^\prin))$ of the
Molino manifold $W$ is equal to the principal orbit type stratum of
$X_W$ considered as a $K$-manifold.

\begin{proposition}[{\cite[Lemma
    5.4]{toeben;localization-basic-characteristic}}]
\label{proposition;orbifold-integration}
Under the hypotheses of Proposition~\ref{proposition;integral}, let
$X$ be a co-oriented $\F$-invariant closed submanifold of $M$.
Suppose that the leaves of $\F_X$ are closed.  Then the transverse
integral $c=\fint_{\F(x)}1$ is independent of $x\in X^\prin$ and for
all $\beta\in\Omega_\ct(X,\F_X)$ we have
\[
\fint_X\beta=(-1)^{\frac12q(q+1)}c\int_{X/\F_X}\beta.
\]
\end{proposition}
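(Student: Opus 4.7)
The plan is to unfold Sergiescu's formula~\eqref{equation;integral} for $\fint_X$ and apply a Fubini argument on the Molino bibundle. Throughout I work with the pullbacks $X_P = \pi^{-1}(X)$ and $X_W = \varrho(X_P)$; the hypothesis that the leaves of $\F_X$ are closed guarantees, via Proposition~\ref{proposition;leaf-orbifold}, that $X/\F_X \cong X_W/K$ is an orbifold, so the right-hand side $\int_{X/\F_X}\beta$ is the standard $K$-invariant orbifold integral on $X_W$.

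To verify that $c = \fint_{\F(x)} 1$ is independent of $x \in X^\prin$, I apply Sergiescu's formula to the co-oriented closed submanifold $\F(x)$, obtaining
\[
c = \int_{\F(x)_W}\varrho_\dag\,\iota_{\pi^\dag(s)}(\nu),
\]
where $\F(x)_W = K\cdot\varrho(p)$ for any $p\in\pi^{-1}(\F(x))$. The integrand is canonically built from the flat section $s$ of $\det(\liec)$ and the transverse Levi-Civita connection, so its integral depends only on the $K$-orbit type of $\varrho(p)$. On the principal stratum this orbit type is constant, by Proposition~\ref{proposition;leaf-orbifold} and the definition of $X^\prin$, so $c$ is well-defined.

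For the main formula, a $\g\ltimes\F$-invariant partition of unity (Proposition~\ref{proposition;unity}) reduces the question to the case where $\beta$ is supported in a small $K$-invariant open subset $U \subseteq X_W^\prin$ over which the bundle $\varrho\colon \pi^{-1}(U)\to U$ is trivial, with fibre a closed $K$-orbit corresponding to a single leaf of $\F_X$. Evaluating
\[
\fint_X\beta = \int_{X_W}\varrho_\dag\iota_{\pi^\dag(s)}(\pi^*\beta\wedge\nu)
\]
by fibre integration along $\varrho$: the contraction with $\pi^\dag(s)$ together with the wedge against $\nu$ strips off the leaf directions, so the fibre part contributes exactly the leaf volume $c$ (up to a Koszul sign), while the descent $\varrho_\dag$ lands on the pushdown $\bar\beta$ on $X_W/K \cong X/\F_X$, whose integral is $\int_{X/\F_X}\beta$. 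Contributions from the measure-zero complement of $X^\prin$ vanish in the limit.

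The main obstacle is the sign bookkeeping. The multivector $\pi^\dag(s)$ has degree $l = \dim(\liec)$, the form $\nu = \theta_\LC^*(\nu_{\liek})$ has degree $\tfrac12 q(q-1)$, and the orientation on $X_W$ is determined by these choices together with the given transverse co-orientation of $\F$. The Koszul signs arising from permuting $\iota_{\pi^\dag(s)}$ past $\pi^*\beta \wedge \nu$, combined with the orientation convention and the dimension relations recorded in the notation index, must be aggregated to produce $(-1)^{\frac12 q(q+1)}$. This is a finite sign computation; once it is settled, the Fubini decomposition above yields the formula.
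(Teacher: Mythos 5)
First, a point of comparison: the paper does not prove this proposition at all --- it is imported verbatim from T\"oben's Lemma~5.4 --- so there is no internal argument to measure yours against. Your outline (unwind Sergiescu's formula~\eqref{equation;integral}, pass to $X_P=\pi^{-1}(X)$ and $X_W=\varrho(X_P)$, identify $X/\F_X$ with $X_W/K$ via Proposition~\ref{proposition;leaf-orbifold}, and integrate first along the fibres of $\varrho$) is indeed the natural strategy and the one T\"oben follows. As written, however, it contains one step that is actually wrong and one that is missing, not merely routine.

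The wrong step is your justification that $c=\fint_{\F(x)}1$ is constant on $X^\prin$. You argue that the integrand $\varrho_\dag\iota_{\pi^\dag(s)}(\nu)$ is canonically built from $s$ and $\theta_\LC$, hence its integral over $K\cdot\varrho(p)$ ``depends only on the $K$-orbit type.'' That inference is false in general: a $K$-invariant form of the right degree can have integrals over principal orbits that vary from orbit to orbit (the rotation-invariant $1$-form $r\,d\theta$ on $\R^2\setminus\{0\}$ already does this). What forces constancy here is the specific normalization --- $\nu=\theta_\LC^*(\nu_\liek)$ restricts to the \emph{normalized} Haar volume on each fibre $K\cdot p$ of $\pi$, $s$ is a \emph{flat} section of $\det(\liec)$, and on the principal stratum the fibres of $K\cdot p\to K\cdot\varrho(p)$ are governed by the principal isotropy, which is constant up to conjugacy --- and this computation has to be carried out; it is where the value of $c$ actually comes from. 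The second gap is that you explicitly defer the sign $(-1)^{\frac12 q(q+1)}$, which is part of the assertion: once one grants that the two sides are proportional, the entire remaining content of the formula is the identification of the constant, sign included, so a proof cannot end with ``this is a finite sign computation.'' The remaining thin spots --- matching the $K$-quotient integral on $X_W$ with Satake's orbifold integral on $X/\F_X$ (the $1/\abs{\Gamma}$ factors must be absorbed into $c$), and the limiting argument discarding the measure-zero set $X_W\setminus\varrho(\pi^{-1}(X^\prin))$ --- are standard but should be stated as such rather than implied.
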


\subsection{Euler forms}\label{section;euler}

Let $(M,\F,g)$ be a complete Riemannian foliated manifold equipped
with an isometric transverse action of a Lie algebra $\g$.  By
Theorem~\ref{theorem;symmetry}\eqref{item;fixed} the fixed-leaf
manifold $M^\g$ is a $\g\ltimes\F$-invariant closed submanifold of
$M$.  We now show that if $\g$ is abelian every connected component
$X$ of $M^\g$ is co-oriented and of even codimension $2l$.  Thus each
$X$ has an equivariant basic Thom form
$\tau_X\in\Omega_{\cv,\g}^{2l}(NX,\F_{NX})$.  Its restriction to the
zero section $\eta_X=\zeta_X^*\tau_X\in\Omega^{2l}(X,\F_X)$ is known
as the associated \emph{equivariant basic Euler form}.  The next
result, which is well known for torus actions on manifolds (see
e.g.~\cite[\S\,3]{atiyah-bott;moment-map-equivariant-cohomology}
or~\cite[\S\,1]{brion;equivariant-cohomology-intersection-theory}),
gives a simple formula for the restriction of the Euler form to a
point.

\begin{proposition}\label{proposition;euler-fixed}
Let $(M,\F,g)$ be a complete Riemannian foliated manifold equipped
with an isometric transverse action of an abelian Lie algebra $\g$ and
let $X$ be a connected component of the fixed-leaf manifold $M^\g$.
Let $r$ be the codimension of $X$ in $M$ and let $E=NX$ be the normal
bundle of $X$.  Let $j_x\colon\{x\}\to X$ be the inclusion of $x\in
X$, and let $a_x\colon\g\to\lie{o}(E_x,g_{E,x})$ be the $\g$-action on
the fibre $E_x$.
\begin{enumerate}
\item\label{item;complex}
The bundle $E$ has a $\g$-invariant almost complex structure $J$.  The
weights $\lambda_1$, $\lambda_2$,~\dots, $\lambda_l\in\g^*$ of the
action $a_x$ with respect to $J_x$ are nonzero and are independent of
$x\in X$.  We have a weight space decomposition $E=E_{\lambda_1}\oplus
E_{\lambda_2}\oplus\cdots\oplus E_{\lambda_l}$ into $\g$-equivariant
foliated subbundles.  In particular $E$ is orientable and the rank
$r=2l$ of $E$ is even.
\item\label{item;point}
Let $\eta_X=\zeta_X^*\tau_X\in\Omega_\g^r(X,\F_X)$ be an equivariant
basic Euler form with respect to the orientation given by $J$ and let
$\eta_0$ be the component of $\eta_X$ in
$S\g^*\otimes\Omega^0(X,\F_X)$.  Then
$\eta_0=\lambda_1\lambda_2\cdots\lambda_l\in S^l\g^*$.  Reversing the
co-orientation of $X$ has the effect of changing the sign of $\eta_X$
and $\eta_0$.
\item\label{item;nilpotent}
The Euler form $\eta_X$ becomes invertible in the algebra
$\Omega_\g(X,\F_X)$ after inverting the weights $\lambda_1$,
$\lambda_2$,~\dots, $\lambda_l$.
\end{enumerate}
\end{proposition}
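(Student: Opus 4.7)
My strategy for \eqref{item;complex} is to analyze the linearized $\g$-action on the fibres of $E=NX$ and then apply the abelian hypothesis. First I would observe that the transverse metric $g$ on $(M,\F)$ induces a fibre metric $g_E$ on $E\cong N\F/N\F_X$, and that the transverse Levi-Civita connection $\theta_\LC$ restricts to a metric connection on $E$, as noted in Remark~\ref{remark;tubular}. Because $X\subseteq M^\g$, every $\xi\in\g$ has $\xi_M$ vanishing along $X$, so, as in the proof of Proposition~\ref{proposition;fixed-leaf}\eqref{item;inner}, the transverse Killing hypothesis yields a well-defined linear map $a_x\colon\g\to\lie{o}(E_x,g_{E,x})$ at each $x\in X$. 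Since $\g$ is abelian, the image consists of pairwise commuting skew-adjoint endomorphisms, which can be simultaneously block-diagonalized, giving a decomposition $E_x=\bigoplus_{\lambda}E_{\lambda,x}$ into $2$-dimensional weight subspaces (for nonzero weights) and a potential fixed subspace $E_{0,x}$. A zero weight would force the existence of a $\g$-fixed normal direction at $x$; by the Bochner linearization of Proposition~\ref{proposition;fixed-leaf}\eqref{item;bochner}, this would produce $\g$-fixed points outside $X$ but in the component of $M^\g$ containing $X$, contradicting the definition of $X$. Hence all weights are nonzero. To show weights are independent of $x$, I would use that the $\g$-action on $E$ commutes with the induced connection (since the transverse $\g$-action preserves $g$ and $\theta_\LC$ is its Levi-Civita connection), so parallel transport along curves in the connected manifold $X$ intertwines the fibre representations; as weights are locally constant integer-type invariants of a continuously varying representation on a connected base, they are globally constant. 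The decomposition $E=\bigoplus E_{\lambda_i}$ then globalizes by parallel transport, and a choice of polarization (e.g.\ taking the almost complex structure $J_x=A/\sqrt{-A^2}$ for a generic $A=a_x(\xi_0)$) produces a globally defined, $\g$-invariant almost complex structure on $E$, so $r=2l$.

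For \eqref{item;point}, my approach is to reduce the computation of $\eta_0$ to the classical equivariant Euler class at a fixed point. By Lemma~\ref{lemma;pushforward}\eqref{item;pushforward-choice} the cohomology class of $\eta_X$ is independent of the choice of Thom form, so I may take $\tau_X$ to be the universal equivariant Thom form of~\cite[\S\,4.5]{lin-sjamaar;thom}, constructed using the $J$-compatible metric and connection on $E$. This form respects the decomposition $E=\bigoplus E_{\lambda_i}$ multiplicatively, and on a $\g$-equivariant oriented rank-two foliated subbundle $E_{\lambda_i}$ with weight $\lambda_i$ the pullback by the zero section of the corresponding universal Euler class is $\lambda_i+c_1(E_{\lambda_i},\nabla)$, where $c_1$ denotes the basic first Chern form of the induced $\F_X$-basic connection. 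Taking the product over $i=1,\dots,l$ yields $\zeta_X^*\tau_X=\prod_i(\lambda_i+c_1(E_{\lambda_i},\nabla))$, and projecting onto the form-degree zero component gives $\eta_0=\prod_i\lambda_i\in S^l\g^*$. The sign change under reversal of co-orientation is immediate from Lemma~\ref{lemma;pushforward}\eqref{item;pushforward-gdgm}, which forces the Thom form, hence the Euler form, to change sign.

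For \eqref{item;nilpotent}, I would write $\eta_X=\eta_0+\eta_+$, where $\eta_+\in\Omega_\g(X,\F_X)$ has strictly positive $\F_X$-form degree. Since $X$ is finite-dimensional, basic forms on $X$ of degree exceeding $\dim X$ vanish, so $\eta_+$ is nilpotent in the graded algebra $\Omega_\g(X,\F_X)$. Once $\eta_0=\lambda_1\cdots\lambda_l$ is made invertible by passing to the localization of $S\g^*$ at $\{\lambda_1,\dots,\lambda_l\}$, the geometric series
\[
\eta_X^{-1}=\eta_0^{-1}\sum_{k\ge 0}(-\eta_0^{-1}\eta_+)^k
\]
is a finite sum that defines an inverse to $\eta_X$ in the localized algebra.

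The principal obstacle is part~\eqref{item;point}: identifying $\eta_0$ as the explicit product of weights requires a careful unpacking of the universal equivariant basic Thom form of~\cite{lin-sjamaar;thom} and its compatibility with direct-sum decompositions of foliated Euclidean vector bundles. The globalization of the decomposition in~\eqref{item;complex} is also subtle, since $J$ is only well-defined up to conventions on the polarization; I would fix this once and for all using a generic element $\xi_0\in\g$ constant across the connected base $X$.
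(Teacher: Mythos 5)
Your proposal is correct in substance, and for parts~\eqref{item;complex} and~\eqref{item;nilpotent} it follows essentially the same path as the paper: the fibrewise orthogonal representation, simultaneous diagonalization using commutativity, constancy of the weights via parallel transport for the invariant metric connection, and the geometric series for $\eta_X^{-1}$ after inverting $\eta_0$. (Your justification that no weight vanishes, via the Bochner chart of Proposition~\ref{proposition;fixed-leaf}\eqref{item;bochner}, is a welcome expansion of a step the paper only asserts; and your $J_x=A/\sqrt{-A^2}$ for generic $A=a_x(\xi_0)$ agrees blockwise with the paper's normalization $\lambda(\xi_0)>0$.) Where you genuinely diverge is part~\eqref{item;point}. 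You compute $\zeta_X^*\tau_X$ by splitting $E$ into the rank-two weight bundles and multiplying the factors $\lambda_i+c_1(E_{\lambda_i},\nabla)$, which requires the Whitney-sum multiplicativity of the universal equivariant basic Thom (or Euler) form --- a true but uncited property that you rightly flag as the principal obstacle. The paper instead first observes that, the $\g$-action on $X$ being trivial, $d_\g=\mathrm{id}\otimes d$ on $\Omega_\g(X,\F_X)=S\g^*\otimes\Omega(X,\F_X)$, so closedness of $\eta_X$ forces $\eta_0$ to be constant and equal to $j_x^*(\eta_X)$ for any $x$; it then invokes only the naturality of the universal Euler form under the restriction $j_x$ (which reduces the equivariant Cartan--Chern--Weil homomorphism to the map $a_x^*$) and the elementary fact that the Pfaffian of a block-diagonal skew matrix is the product of the $2\times2$ Pfaffians, yielding $(-2\pi)^{-l}a_x^*(\Pf)=\lambda_1\cdots\lambda_l$. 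Your route buys a finer statement (the full splitting-principle formula for $\zeta_X^*\tau_X$, not just its degree-zero part), while the paper's buys economy: it needs no multiplicativity, only the naturality already established in the cited reference, and it makes the independence of $\eta_0$ from the choice of Thom form transparent rather than an appeal to cohomology classes. If you retain your approach, you should either prove the multiplicativity of the Mathai--Quillen-type construction in the basic setting or substitute the restriction-to-a-point argument for the degree-zero component.
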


\begin{proof}
\eqref{item;complex}~The transverse $\g$-action on $X$ being trivial,
the transverse $\g$-action on $E$ is tangent to the fibres of $E$.
The foliation induced by $\F_E$ on each fibre $E_x$ is the trivial
$0$-dimensional foliation, so on $E_x$ we have a linear $\g$-action in
the usual sense.  The fibre metric of $E$ is by assumption
$\g$-invariant, so for each $x\in X$ the transverse $\g$-action on
$E_x$ is given by an infinitesimal orthogonal representation
$a_x\colon\g\to\lie{o}(E_x,g_{E,x})$.  Our vector bundle comes
equipped with an invariant metric connection, so a parallel transport
argument shows that the representation
$a_y\colon\g\to\lie{o}(E_y,g_{E,y})$ is equivalent to $a_x$ for all
$y\in X$.  Since $a_x$ leaves only the origin of $E_x$ fixed, we have
a real isotypical decomposition
\begin{equation}\label{equation;isotype}
E_x\cong\bigoplus_{\lambda\in P}\R^2(\lambda)^{\oplus m(\lambda)}.
\end{equation}
Here $\R^2(\lambda)$ is a copy of $\R^2$ on which $\g$ acts through
the infinitesimal rotations
\begin{equation}\label{equation;plane-rotation}
\xi\longmapsto2\pi\lambda(\xi)\begin{pmatrix}0&-1\\1&0\end{pmatrix},
\end{equation}
$m(\lambda)$ is the multiplicity of $\R^2(\lambda)$, and $P$ is a
finite subset of $\g^*\backslash\{0\}$.  Since
$\R^2(\lambda)\cong\R^2(-\lambda)$, the weights $\lambda$ are defined
only up to sign, so $P$ is not unique.  To remove the ambiguity we
choose an element $\xi_0\in\g$ which fixes only the origin of $E_x$
and require all $\lambda\in P$ to satisfy $\lambda(\xi_0)>0$.  We give
$E_x$ the almost complex structure $J_x$ which is equal to
$\bigl(\begin{smallmatrix}0&-1\\1&0\end{smallmatrix}\bigr)$ on
  $\R^2(\lambda)$.  This almost complex structure depends on the
  choice of $\xi_0$, but not on the
  isomorphism~\eqref{equation;isotype}
  (see~\cite[\S\,8.5.3]{guillemin-sternberg;supersymmetry-equivariant}),
  and gives us the desired almost complex structure $J$ and the
  corresponding isotypical decomposition of $E$.

\eqref{item;point}~Since $\g$ acts trivially on $X$, the Weil complex
of $X$ is $\Omega_\g(X,\F_X)=S\g^*\otimes\Omega(X,\F_X)$ with
differential $d_\g={\id}\otimes d$.  Because the Euler form $\eta_X$
is closed, this shows that its component $\eta_0\in
S^l\g^*\otimes\Omega^0(X,\F_X)$ is closed.  In other words, for any
$x\in X$, $\eta_0\in S^l\g^*$ is equal to $j_x^*(\eta_X)$, which is a
$\g$-equivariant Euler form for the bundle $E_x$ over the one-point
space $\{x\}$.  To calculate this, we may take $\tau_X$ to be the
universal equivariant basic Thom form
of~\cite[\S\,4.5]{lin-sjamaar;thom} with respect to a $\g$-invariant
basic metric connection $\theta$ on $E$.  Then the Euler form is
\[
\eta_X=(-2\pi)^{-l}c_{\g,\theta}(\Pf),
\]
where $c_{\g,\theta}\colon(S\liek^*)^{\liek}\longto\Omega_\g(X,\F_X)$
is the $\g$-equivariant characteristic, or Cartan-Chern-Weil,
homomorphism~\eqref{equation;equivariant-characteristic}, with
$\liek=\lie{o}(2l)$, and $\Pf\in S^l(\liek^*)^\liek$ is the Pfaffian.
The $\g$-equivariant characteristic homomorphism
$(S\liek^*)^{\liek}\to\Omega_\g(\{x\})\cong S\g^*$ for the bundle
$E_x\to\{x\}$ is just the restriction map
$a_x^*\colon(S\liek^*)^{\liek}\to S\g^*$ induced by the action
$a_x\colon\g\to\liek$, so the naturality property of the universal
Euler form (see~\cite[Proposition~4.6.1]{lin-sjamaar;thom}) with
respect to the map $j_x$ yields
$j_x^*(\eta_X)=(-2\pi)^{-l}a_x^*(\Pf)$.  The polynomial $a_x^*(\Pf)$
is the product of the Pfaffians of the
matrices~\eqref{equation;plane-rotation},
i.e.\ $a_x^*(\Pf)=(-2\pi)^l\lambda_1\lambda_2\cdots\lambda_l$, and
hence $j_x^*(\eta_X)=\lambda_1\lambda_2\cdots\lambda_l$.  The last
assertion follows from the fact that if we reverse the orientation of
$E$ the Thom form changes sign.

\eqref{item;nilpotent}~The form $\eta_X-\eta_0\in
S\g^*\otimes\Omega^+(X,\F_X)$ is nilpotent.  It follows from
\eqref{item;point} that $\eta_0=\lambda_1\lambda_2\cdots\lambda_l$ is
a nonzero element of $S^l\g^*$.  Therefore $\eta_X$ is invertible in the
localization of the $S\g^*$-algebra $\Omega_\g(X,\F_X)$ at the weights
$\lambda_i$.
\end{proof}

The utility of~\eqref{item;nilpotent} lies in the fact that the
localization map
\[
\Omega_\g(X,\F_X)\to
\Omega_\g(X,\F_X)\bigl[(\lambda_1\cdots\lambda_l)^{-1}\bigr]
\]
is injective, because the weights in $\g^*\backslash\{0\}$ are not
zero divisors in $S\g^*$.  Identities involving equivariant basic
forms can therefore be verified in the localized algebra, which is
easier to handle.

\subsection{Covariant localization}\label{section;covariant}

Let $(M,\F,g)$ be a complete Riemannian foliated manifold equipped
with an isometric transverse action of an abelian Lie algebra $\g$.
By Proposition~\ref{proposition;euler-fixed} connected components $X$
of the fixed-leaf manifold $M^\g$ are co-orientable and of even
codimension $2l_X$.  Fixing a co-orientation for each $X$ we have
pushforward homomorphisms $i_{X,*}$, which we can assemble to a single
map
\[
i_{\g,*}=\bigoplus_Xi_{X,*}\colon\bigoplus_X\Omega_\g(X,\F_X)[-2l_X]
\longto\Omega_\g(M,\F),
\]
where $X$ ranges over the connected components of $M^\g$.  For each
$X$ the normal bundle $NX$ has an equivariant basic Euler form
$\eta_X\in\Omega_\g^{2l_X}(X,\F)$ and a well-defined weight product
$\lambda_X\in S^{l_X}\g^*$.  By
Proposition~\ref{proposition;euler-fixed}\eqref{item;nilpotent} the
forms $\eta_X$ become invertible after extending the ring of scalars
$S\g^*$ by inverting the $\lambda_X$, so the following statement makes
sense.

\begin{theorem}[covariant localization]\label{theorem;covariant}
Let $(M,\F,g)$ be a transversely compact complete Riemannian foliated
manifold equipped with an isometric transverse action of an abelian
Lie algebra $\g$.  The $S\g^*$-linear homomorphism
$i_{\g,*}=\bigoplus_Xi_{X,*}$ induces an isomorphism on cohomology
after inverting the weight products $\lambda_X\in S\g^*$ and any $f\in
S\g^*$ that vanishes on the cone $\Gamma_M=\bigcup_{x\in M\backslash
  M^\g}\stab(\g\ltimes\F,x)$.  The inverse of this isomorphism is
induced by the map $i_\g^!$ defined by
\[
i_\g^!(\alpha)=\sum_X\eta_X^{-1}\wedge i_X^*\alpha,
\]
where the sum is over the connected components $X$ of the fixed-leaf
manifold $M^\g$.
\end{theorem}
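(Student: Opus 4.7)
The plan is to compute the composition $i_\g^*\circ i_{\g,*}$ at the level of equivariant basic cohomology, show that it becomes invertible after the stated localization, and then combine this with Theorem~\ref{theorem;borel} to conclude that $i_{\g,*}$ is itself an isomorphism with inverse $i_\g^!$.

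The key input is two standard formulas for the pushforward. First, the \emph{self-intersection formula}: for every connected component $X$ of $M^\g$ and every $\beta\in\Omega_\g(X,\F_X)$,
\[
i_X^*i_{X,*}\beta \;=\; \eta_X\wedge\beta.
\]
This holds already at the form level, as follows immediately from the definition $i_{X,*}\beta=f_{X,*}(\tau_X\wedge\pi_X^*\beta)$, the factorization $i_X=f_X\circ\zeta_X$, the identity $f_X^*\circ f_{X,*}=\id$ on forms supported in $f_X(NX)$, and $\pi_X\circ\zeta_X=\id_X$. Second, the \emph{cross-term vanishing}: for distinct components $X\ne Y$ of $M^\g$, the induced map $i_Y^*\circ i_{X,*}$ is zero on cohomology. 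Since $M^\g$ is closed by Theorem~\ref{theorem;symmetry}\eqref{item;fixed}, the components $X$ and $Y$ are disjoint closed submanifolds of $M$; by Proposition~\ref{proposition;tubular} we may therefore pick the $\g\ltimes\F$-invariant tubular neighbourhood embedding $f_X$ so that $f_X(NX)$ is disjoint from $Y$, in which case $i_{X,*}\beta$ vanishes identically near $Y$. The cohomological independence of $i_{X,*}$ from these choices, Lemma~\ref{lemma;pushforward}\eqref{item;pushforward-choice}, then gives the vanishing for every choice.

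Combining these two formulas, $i_\g^*\circ i_{\g,*}$ is block-diagonal with $X$-th block equal to multiplication by the equivariant basic Euler form $\eta_X$. By Proposition~\ref{proposition;euler-fixed}\eqref{item;nilpotent}, each $\eta_X$ becomes a unit in $\Omega_\g(X,\F_X)$ after inverting the weight product $\lambda_X\in S^{l_X}\g^*$ (the leading scalar term of $\eta_X$ is $\lambda_X$ and the rest is nilpotent). Hence $i_\g^*\circ i_{\g,*}$ becomes an isomorphism on cohomology after inverting the $\lambda_X$, with inverse given summand-wise by $\eta_X^{-1}\wedge\cdot\,$.

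Finally, Theorem~\ref{theorem;borel} tells us that $i_\g^*\colon H_\g(M,\F)\to H_\g(M^\g,\F)$ becomes an isomorphism after inverting any $f\in S\g^*$ vanishing on the cone $\Gamma_M$. Localising at both $f$ and all the $\lambda_X$, we have that $i_\g^*$ and $i_\g^*\circ i_{\g,*}$ are both isomorphisms, which forces $i_{\g,*}$ to be an isomorphism as well, with inverse
\[
i_{\g,*}^{-1} \;=\; (i_\g^*\circ i_{\g,*})^{-1}\circ i_\g^* \;=\; \bigoplus_X\eta_X^{-1}\wedge i_X^* \;=\; i_\g^!.
\]
The only genuinely delicate step is the cross-term vanishing, where one must simultaneously exploit the existence of $\g\ltimes\F$-invariant tubular neighbourhoods (Proposition~\ref{proposition;tubular}) and the freedom to replace $\tau_X$ and $f_X$ up to homotopy (Lemma~\ref{lemma;pushforward}\eqref{item;pushforward-choice}); once this is in hand the remainder of the argument is a formal consequence of the two component formulas together with Theorem~\ref{theorem;borel}.
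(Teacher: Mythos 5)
Your proof is correct and follows essentially the same route as the paper's: both rest on the self-intersection formula $i_X^*i_{X,*}\beta=\eta_X\wedge i_X^*\beta$, the invertibility of $\eta_X$ after inverting $\lambda_X$, and Theorem~\ref{theorem;borel} to cancel the restriction $i_\g^*$. The only difference is cosmetic --- the paper verifies $i_\g^!\circ i_{\g,*}=\id$ and $i_\g^*\circ i_{\g,*}\circ i_\g^!=i_\g^*$ directly, whereas you invert $i_\g^*\circ i_{\g,*}$ and solve for $i_{\g,*}^{-1}$ --- and your explicit justification of the vanishing of the cross-terms $i_Y^*\circ i_{X,*}$ for $X\ne Y$ spells out a point the paper leaves implicit.
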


\begin{proof}
For all $\beta\in\Omega_\g(M^\g,\F_{M^\g})$ and for all components $X$
of $M^\g$ we have
\[
i_X^*i_{X,*}\beta=\zeta_X^*\zeta_{X,*}\beta=
\zeta_X^*\bigl(\tau_\g(NX,\F_{NX})\wedge\pi_X^*\beta\bigr)=\eta_X\wedge
i_X^*\beta,
\]
and therefore
\[
i_\g^!(i_{\g,*}\beta)=\sum_X\eta_X^{-1}\wedge
i_X^*i_{X,*}\beta=\sum_X\eta_X^{-1}\wedge\eta_X\wedge
i_X^*\beta=\sum_Xi_X^*\beta=\beta,
\]
which shows that $i_\g^!\circ i_{\g,*}=\id$.  Let
$i_\g^*=\sum_Xi_X^*\colon\Omega_\g(M,\F)\to\Omega_\g(M^\g,\F_{M^\g})$
be the restriction map.  For all $\alpha\in\Omega_\g(M,\F)$ we have
\[
i_\g^*i_{\g,*}i_\g^!(\alpha)= \sum_Xi_X^*i_{X,*}(\eta_X^{-1}\wedge
i_X^*\alpha)=\sum_X\eta_X\wedge\eta_X^{-1}\wedge
i_X^*\alpha=\sum_Xi_X^*\alpha=i_\g^*\alpha,
\]
which shows that $i_\g^*\circ i_{\g,*}\circ i_\g^!=i_\g^*$.  By the
contravariant localization theorem, Theorem~\ref{theorem;borel},
$i_\g^*$ induces an isomorphism on cohomology, so we conclude that
$i_{\g,*}\circ i_\g^!$ induces the identity on cohomology.
\end{proof}

Pushing forward to a point now gives an integration formula, which is
a foliated version of the formulas of Atiyah and
Bott~\cite{atiyah-bott;moment-map-equivariant-cohomology} and Berline
and Vergne~\cite{berline-vergne;zeros}.

\begin{theorem}\label{theorem;abbv} 
Let $(M,\F,g)$ be a transversely compact complete Riemannian foliated
manifold equipped with an isometric transverse action of an abelian
Lie algebra $\g$.  Suppose that the foliation $\F$ is transversely
oriented and that the line bundle $\det(\liec)$ is trivial.  Then
\[
\fint_M\alpha=\sum_X\fint_X\eta_X^{-1}\wedge i_X^*\alpha
\]
for all $d_\g$-closed $\alpha\in\Omega_\g(M,\F)$, where the sum is
over the connected components $X$ of the fixed-leaf manifold $M^\g$.
\end{theorem}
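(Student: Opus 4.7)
The strategy is to compose the covariant localization theorem (Theorem~\ref{theorem;covariant}) with the adjunction and equivariant Stokes properties of the transverse integral collected in Lemma~\ref{lemma;transverse}. First I note that by Lemma~\ref{lemma;transverse}\eqref{item;equivariant-stokes} the map $\fint_M\colon\Omega_{\g,\ct}(M,\F)\to(S\g^*)^\g$ is a morphism of $\g$-differential graded modules of degree $-q$, hence descends to a $(S\g^*)^\g$-linear map $H_\g(M,\F)\to(S\g^*)^\g$, and similarly for each fixed-leaf component $X$. Since $M/\barF$ is compact, transversely compact supports do not matter. Extending scalars along the localization of $S\g^*$ at the weight products $\lambda_X$ together with an element $f\in S\g^*$ vanishing on $\Gamma_M$, these integrals give $(S\g^*)^\g$-linear homomorphisms on the localized cohomology.

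Next I apply the covariant localization theorem: on the localized equivariant basic cohomology the composition $i_{\g,*}\circ i_\g^!$ is the identity, so the classes $[\alpha]$ and $[i_{\g,*}i_\g^!(\alpha)]=\sum_X[i_{X,*}(\eta_X^{-1}\wedge i_X^*\alpha)]$ coincide in the localized $H_\g(M,\F)$. Applying the (localized) transverse integral to both sides yields
\[
\fint_M\alpha=\sum_X\fint_Mi_{X,*}(\eta_X^{-1}\wedge i_X^*\alpha).
\]
Finally I use the adjunction formula $\fint_Mi_{X,*}\gamma=\fint_X\gamma$ from Lemma~\ref{lemma;transverse}\eqref{item;adjunction}, applied term-by-term after inverting $\lambda_X$ (the adjunction is $S\g^*$-linear, so extends to the localization), to rewrite the right-hand side as $\sum_X\fint_X\eta_X^{-1}\wedge i_X^*\alpha$.

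The main obstacle I anticipate is a bookkeeping one: the identity $[\alpha]=[i_{\g,*}i_\g^!(\alpha)]$ is guaranteed only after inverting the weights and $f$, so the equality produced lives a priori in a localization of $(S\g^*)^\g$ rather than in $(S\g^*)^\g$ itself. The left-hand side $\fint_M\alpha$ is a genuine element of $(S\g^*)^\g$ by Lemma~\ref{lemma;transverse}\eqref{item;equivariant-stokes}, while each term $\fint_X\eta_X^{-1}\wedge i_X^*\alpha$ is only a rational function in general; the theorem is asserted as an equality in this larger ring, and that is precisely what the argument above delivers. One also needs to check that the equivariant Stokes identity $\fint_Md_\g\beta=0$ still applies to the ``correction'' $\beta$ produced by localization, but this is immediate since $\fint_M$ was already extended $S\g^*$-linearly to the localized complex and the chain homotopy $d_\g\beta=\alpha-i_{\g,*}i_\g^!(\alpha)$ takes place in that complex.
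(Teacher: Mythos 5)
Your proposal is correct and follows essentially the same route as the paper: apply Theorem~\ref{theorem;covariant} to write $[\alpha]=[i_{\g,*}i_\g^!(\alpha)]$ after localization, kill the coboundary with the equivariant Stokes formula of Lemma~\ref{lemma;transverse}, and conclude with the adjunction $\fint_Mi_{X,*}\gamma=\fint_X\gamma$. Your extra remarks on where the resulting identity lives (in the localization of $(S\g^*)^\g$) are a sound clarification of a point the paper's terse proof leaves implicit.
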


\begin{proof}
It follows from Theorem~\ref{theorem;covariant} and from Stokes'
formula that
\[
\fint_M\alpha=\fint_Mi_{\g,*}i_\g^!(\alpha)=
\sum_X\fint_Mi_{X,*}(\eta_X^{-1}\wedge i_X^*\alpha).
\]
Lemma~\ref{lemma;transverse}\eqref{item;adjunction} now yields the
desired result.
\end{proof}

\begin{remark}
Reversing the co-orientation of $X$ changes the sign of the integral
$\fint_X$ (Remark~\ref{remark;integral}) and of the Euler form
$\eta_X$
(Proposition~\ref{proposition;euler-fixed}\eqref{item;point}), so the
integral $\fint_X\eta_X^{-1}\wedge i_X^*\alpha$ is independent of the
choice of co-orientation.
\end{remark}

This immediately gives a foliated version of the Duistermaat-Heckman
formula~\cite{duistermaat-heckman;variation}, where we consider the
case of a \emph{transverse symplectic form}, i.e.\ a closed basic
$2$-form $\omega\in\Omega^2(M,\F)$ which is nondegenerate on the
normal bundle $N\F$.  Then the codimension $q=2n$ of $\F$ is even.
Suppose that the transverse $\g$-action is Hamiltonian with moment map
$\Phi\colon M\to\g^*$.  Then the \emph{equivariant transverse
  symplectic form} $\omega_\g=\omega+\Phi$ is $\g$-equivariant and
$\F$-basic.  The fixed-leaf components $X$ are transversely symplectic
of dimension $2n_X$, and the moment map has a constant value $\Phi_X$
on $X$.  Applying the integration formula Theorem~\ref{theorem;abbv}
to the form $\alpha=\exp(\omega_\g)$ gives the following result.

\begin{theorem}\label{theorem;duistermaat-heckman}
Let $(M,\F,g)$ be a transversely compact complete Riemannian foliated
manifold equipped with an isometric transverse action of an abelian
Lie algebra $\g$.  Suppose that $(M,\F)$ is transversely symplectic,
that the line bundle $\det(\liec)$ is trivial, and that the transverse
$\g$-action is Hamiltonian with moment map $\Phi\colon M\to\g^*$.
Then
\[
\fint_Me^\Phi\frac{\omega^n}{n!}=
\sum_Xe^{\Phi_X}\fint_X\eta_X^{-1}\wedge\frac{\omega^{n_X}}{n_X!}.
\]
\end{theorem}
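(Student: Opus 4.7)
The plan is to derive the statement as an immediate application of the equivariant integration formula, Theorem \ref{theorem;abbv}, to the $d_\g$-closed equivariant basic form $\alpha = \exp(\omega_\g)$. The first task is to check that $\alpha$ is closed. The identity $d\omega = 0$ for the basic symplectic form together with the Hamiltonian condition $\iota(\xi_M)\omega = d\langle\Phi,\xi\rangle$ yields $d_\g \omega_\g = 0$ in the Cartan model, and since $d_\g$ is a graded derivation of the commutative algebra $\Omega_\g(M,\F)$, the formal power series $\exp(\omega_\g) = \sum_k \omega_\g^k/k!$ is also $d_\g$-closed.

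Next, on each connected component $X$ of the fixed-leaf manifold $M^\g$, I would compute the restriction $i_X^*\alpha$. The vanishing $\xi_{M,x} = 0$ for all $\xi \in \g$ and $x \in X$ combined with the Hamiltonian identity forces $d\Phi|_X = 0$, so $\Phi$ takes a constant value $\Phi_X \in \g^*$ on $X$. Since $\Phi_X$ has de Rham degree zero it commutes with $i_X^*\omega$ in $S\g^* \otimes \Omega(X,\F_X)$, and therefore $i_X^*\exp(\omega_\g) = e^{\Phi_X} \exp(i_X^*\omega)$. Inserting into Theorem \ref{theorem;abbv} gives
\[
\fint_M \exp(\omega_\g) = \sum_X e^{\Phi_X} \fint_X \eta_X^{-1}\wedge\exp(i_X^*\omega).
\]

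The final step is to interpret the transverse integrals through the degree-selection axiom of Definition \ref{definition;integral}\eqref{item;component}. On $M$, the transverse integral extracts the de Rham degree-$q = 2n$ component of $e^{\omega+\Phi}$, which is $e^\Phi \omega^n/n!$ since $\Phi$ has de Rham degree zero. On each $X$, the form $i_X^*\omega$ is transversely symplectic of codimension $q_X = 2n_X$: $\g$-invariance together with the abelianness of $\g$ forces the fibrewise $\g$-action on $N\F|_X$ to split it orthogonally and symplectically as $N\F_X \oplus NX$, with $\omega$ nondegenerate on each summand. Thus $\omega^{n_X}/n_X!$ is a top-degree basic form on $(X,\F_X)$, and the wedge $\eta_X^{-1}\wedge\omega^{n_X}/n_X!$ represents the degree-$q_X$ piece of $\eta_X^{-1}\wedge\exp(i_X^*\omega)$ that the transverse integral reads off. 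The only conceptual ingredients are the equivariant closedness of $\omega_\g$ and the constancy of $\Phi$ on $M^\g$; the main subtlety is the degree bookkeeping that reduces $\exp(i_X^*\omega)$ to its top-degree contribution via the vanishing of basic forms above degree $q_X$.
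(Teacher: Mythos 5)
Your proposal is correct and follows essentially the same route as the paper, which likewise obtains the theorem by applying Theorem~\ref{theorem;abbv} to the equivariantly closed form $\alpha=\exp(\omega_\g)$ and using the constancy of $\Phi$ on the fixed-leaf components together with the degree-selection property of the transverse integral. You have merely spelled out the closedness check, the restriction computation, and the degree bookkeeping in more detail than the paper does.
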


See \cite[\S\,10]{hoffman-sjamaar;hamiltonian-stack} for a different
take on the foliated Duistermaat-Heckman theorem.

\input{riemannian.gls}

%
\begin{center}
\begin{tabular}{@{}ccc@{}}
\toprule
\multicolumn{3}{c}{Dimensions and ranks}\\
\midrule
$\dim(M)$&$=$&$m$\\
$\codim_M(\F)$&$=$&$q$\\
$\rank(\liec)$&$=$&$l$\\
$\dim(K)$&$=$&$\frac12q(q-1)$\\
$\dim(P)$&$=$&$m+\frac12q(q-1)$\\
$\dim(W)$&$=$&$q+\frac12q(q+1)-l$\\
$\codim_X(\F_X)$&$=$&$q_X$\\
$\dim(X_W)$&$=$&$q_X+\frac12q(q+1)-l$\\
\bottomrule
\end{tabular}
\end{center}
%


\bibliographystyle{amsplain}

\bibliography{hamilton}


\end{document}